\let\originallesssim\lesssim
\let\originalgtrsim\gtrsim
\DeclareRobustCommand{\lesssim}{%
  \mathrel{\mathpalette\lowersim\originallesssim}%
}
\DeclareRobustCommand{\gtrsim}{%
  \mathrel{\mathpalette\lowersim\originalgtrsim}%
}
\newcommand{\lowersim}[2]{%
  \sbox\z@{$#1<$}%
  \raisebox{-\dimexpr\height-\ht\z@}{$\m@th#1#2$}%
}
\newtheorem{thm}{Theorem}[section]
\newtheorem{remark}[thm]{Remark}
\newtheorem{lem}[thm]{Lemma}
\newtheorem{defn}[thm]{Definition}
\newtheorem{cor}[thm]{Corollary}
\newcommand\independent{\protect\mathpalette{\protect\independent}{\perp}} 
\def\independent#1#2{\mathrel{\rlap{$#1#2$}\mkern2mu{#1#2}}} 
\DeclareMathOperator*{\supess}{sup\,ess}
\DeclareMathOperator*{\infess}{inf\,ess}
\DeclareMathOperator{\Erf}{Erf}
\DeclareMathOperator{\Erfc}{Erfc}
\def\phi{\varphi}
\def\bee{\begin{eqnarray*}}
\def\ene{\end{eqnarray*}}
\begin{document}
\title[Transport-majorization]{Transport-majorization to analytic and geometric inequalities}


\author{James Melbourne and Cyril Roberto}

\thanks{The last author is supported by the Labex MME-DII funded by ANR, reference ANR-11-LBX-0023-01 and ANR-15-CE40-0020-03 - LSD - Large Stochastic Dynamics, and the grant of the Simone and Cino Del Duca Foundation, France.}

\address{Centro de Investigaci\'on en Matem\'aticas, Probabilidad y Estad\'isticas.: 36023 Guanajuato, Gto, Mexico.}
\address{Universit\'e Paris Nanterre, Modal'X, UMR 9023, FP2M, CNRS FR 2036, 200 avenue de la R\'epublique 92000 Nanterre, France.}

\email{james.melbourne@cimat.mx, croberto@math.cnrs.fr}
\keywords{Majorization; transport; integral inequality; cube slicing, strongly log-concave density}

\date{\today}

\maketitle
 
\begin{abstract}
We introduce a transport-majorization argument that establishes a majorization in the convex order between two densities, based on control of the gradient of a transportation map between them.  As applications, we give elementary derivations of some delicate Fourier analytic inequalities, which in turn yield geometric ``slicing-inequalities'' in both continuous and discrete settings.
As a further consequence of our investigation we prove  that any strongly log-concave probability density  majorizes the Gaussian density and thus the Gaussian density maximizes the R\'enyi and Tsallis entropies of all orders among all strongly log-concave densities.
\end{abstract}

\section{Introduction}

Let us introduce the notion of Majorization which will play a key role in the investigations of this paper.
{
\begin{defn}[Marjorization] \label{defn: Majorization}
    For a finite signed measure $\sigma$ on a vector space $E$, we write $0 \prec \sigma$ and say that \emph{$\sigma$ majorizes $0$ in the convex order} when $\phi$ convex implies
    \[
        0 \leq \int_E \phi \ d \sigma.
    \]
\end{defn}
}

We will be particularly interested in using the machinery of majorization to make statements about the behavior of density functions.  This will correspond to the case that $\sigma$ is supported on $[0,\infty)$ and  is the difference of two postive measures, 
each a pushforward of a density. That is, when $(\mathcal{X}, \mathcal{A}, \mu)$ is a measure space with a measurable function $g: \mathcal{X} \to [0,\infty)$, and $\sigma_1 = g \# \mu$, where 
\begin{align} \label{eq: pushforward}
    g \# \mu (A) \coloneqq \mu(g^{-1}(A)) 
\end{align} for measurable $A \subseteq [0,\infty)$ and similarlly $\sigma_2 = f \# \nu$ with $f: \mathcal{Y} \to [0,\infty)$ measurable for $(\mathcal{Y}, \mathcal{B}, \nu)$ a measure space, and $\sigma = \sigma_2 - \sigma_1$. We say that $\bar{\mu} = g \# \mu$ is the pushforward of $\mu$ by $g$ or that $g$ transports $\mu$ to $\bar{\mu}$ when \eqref{eq: pushforward} holds for all measurable $A$.  Observe that $0 \prec f \# \nu - g \# \mu$ is equivalent to
\[
    \int \phi(g) d\mu \leq \int \phi(f) d\nu
\]
for all convex functions $\phi:[0,\infty) \to \mathbb{R}$ and in this case we may write for brevity $g \# \mu \prec f \# \nu $ in place of $0 \prec f \# \nu - g \# \mu$.
When $\mu=\nu$, we will further abbreviate to $f \prec_\mu g$.

We direct the reader to the textbooks \cite{marshall,shaked2007stochastic} for further background on the convex order and majorization.
We only stress here that our definition $0 \prec f \# \nu - g \# \mu$ is slightly more general and does imply $g \# \mu \prec f \# \nu $ in the commonly used sense.  

We adopt a formulation with signed measures for ease of use against integrability issues. The hypothesis that $\sigma$ is a finite signed measure majorizing $0$ implies that the positive measures $\sigma_+$ and $\sigma_-$ in the Hahn decomposition of $\sigma = \sigma_+ - \sigma_-$, possess the same finite measure, as $\int L d(\sigma_+ - \sigma_-) = 0$ for any linear function $L$, and in particular when $L = 1$.  However our definitions do not require that $f \# \nu$ and $g \# \mu$ themselves be finite measures. For example, with the signed measure formulation, one may consider and prove $g \prec_\mu f$ even when $\int g d\mu = \infty$, so long as $f-g$ is integrable, see Lemma \ref{lem:NP} below.

The notion of a distribution function, that we now introduce, will be useful in connecting our current investigations to previous literature, and for giving several equivalent formulations of Definition \ref{defn: Majorization} (Theorem \ref{thm: chong} below).
Let $(\mathcal{X},\mathcal{A},\mu)$ be a measure space ($\mu$ need not be a probability measure, we may often omit the $\sigma$-algebra). For a non-negative measurable function $g \colon \mathcal{X} \to [0,\infty)$, define its distribution function $G \colon [0,\infty) \to [0,\infty]$ by
$$
G(\lambda) \coloneqq \mu \left( \left\{ x \in \mathcal{X} : g(x) > \lambda  \right\} \right) .
$$


  We will demonstrate that the concept of majorization provides a simple and systematic means for understanding important integral inequalities. In fact, though not explicitly acknowledged in the literature, majorization techniques have been of significant recent interest for proving analytic and geometric inequalities.
In their seminal paper \cite{NP00}, Nazarov and Podkorytov introduced a very elementary but powerful lemma, that we may call Nazarov-Podkorytov's lemma in the sequel, based on distribution functions.  

\begin{lem}[\cite{NP00}] \label{lem:NP}
Let $f$ and $g$ be any two non-negative { measurable} functions on a measure space $(\mathcal{X},\mathcal{A},\mu)$. Let $F$ and $G$ be their distribution functions. Assume that both $F(\lambda)$ and $G(\lambda)$ are finite for every $\lambda >0$. Assume also that at some point $\lambda_o$ the difference $F-G$ changes sign from $-$ to $+$, \textit{i.e.},
$F(\lambda) \leq G(\lambda)$ for all $\lambda \in (0,\lambda_o)$ and $F(\lambda) \geq G(\lambda)$ for all $\lambda > \lambda_o$.
Let $S \coloneqq \{s > 0 : f^s - g^s \in \mathbb{L}^1(\mathcal{X},\mu)\}$.
Then 
if $\int_\mathcal{X} (f^{s_o}-g^{s_o})d\mu=0$,  $\int_\mathcal{X}(f^s-g^s)d\mu \geq 0$ for each $s > s_o$, $s \in S$. The equality may hold only if the functions $F$ and $G$ coincide.
\end{lem}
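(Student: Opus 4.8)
The plan is to reduce everything to a statement about the single-variable distribution functions $F$ and $G$ via the layer-cake (Cavalieri) representation, and then to exploit the single sign change of $F-G$ at $\lambda_o$ together with a clever choice of test function, exactly in the spirit of the Nazarov--Podkorytov argument. Concretely, for $s>0$ one has $f^s > \lambda \iff f > \lambda^{1/s}$, so the distribution function of $f^s$ is $\lambda \mapsto F(\lambda^{1/s})$, and by Fubini
\[
    \int_\mathcal{X} (f^s - g^s)\, d\mu = \int_0^\infty \bigl( F(\lambda^{1/s}) - G(\lambda^{1/s}) \bigr)\, d\lambda = \int_0^\infty \bigl( F(u) - G(u) \bigr)\, s u^{s-1}\, du,
\]
the last step by the substitution $\lambda = u^s$. (Here one must be a little careful: this manipulation is clean when $f^s - g^s \in \mathbb{L}^1$, i.e. $s \in S$, and one may need to split into the regions $\{f \geq g\}$ and $\{f < g\}$ to justify Fubini with the signed integrand; the hypothesis that $F,G$ are finite on $(0,\infty)$ is what keeps the pieces under control.)

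Write $D(u) \coloneqq F(u) - G(u)$ and $\psi_s(u) \coloneqq s u^{s-1}$. The hypothesis says $D \leq 0$ on $(0,\lambda_o)$ and $D \geq 0$ on $(\lambda_o, \infty)$, and the assumption $\int_\mathcal{X}(f^{s_o}-g^{s_o})\,d\mu = 0$ says $\int_0^\infty D(u)\psi_{s_o}(u)\,du = 0$. We want $\int_0^\infty D(u)\psi_s(u)\,du \geq 0$ for $s > s_o$, $s\in S$. The key observation is that for $s > s_o$ the ratio $\psi_s(u)/\psi_{s_o}(u) = (s/s_o)\, u^{s-s_o}$ is (strictly) increasing in $u$. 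Hence $\psi_s(u) - c\,\psi_{s_o}(u)$, where $c \coloneqq \psi_s(\lambda_o)/\psi_{s_o}(\lambda_o) > 0$, is $\leq 0$ for $u < \lambda_o$ and $\geq 0$ for $u > \lambda_o$ — the same sign pattern as $D$. Therefore $D(u)\bigl(\psi_s(u) - c\,\psi_{s_o}(u)\bigr) \geq 0$ for all $u > 0$, and integrating gives
\[
    \int_0^\infty D(u)\psi_s(u)\, du \;\geq\; c \int_0^\infty D(u)\psi_{s_o}(u)\, du \;=\; 0 .
\]
This is exactly the desired conclusion once we translate back via the Fubini identity above.

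For the equality case: if $\int_\mathcal{X}(f^s - g^s)\,d\mu = 0$ for some $s > s_o$, then the nonnegative integrand $D(u)\bigl(\psi_s(u)-c\,\psi_{s_o}(u)\bigr)$ must vanish $du$-a.e.; since $\psi_s - c\,\psi_{s_o}$ has only the single zero $u=\lambda_o$, this forces $D(u) = 0$ for a.e. $u \neq \lambda_o$, and then monotonicity/right-continuity properties of distribution functions (both $F$ and $G$ are non-increasing) upgrade this to $D \equiv 0$, i.e. $F = G$.

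I expect the main obstacle to be purely technical rather than conceptual: justifying the Fubini interchange for the \emph{signed} integrand $f^s - g^s$ under only the stated integrability hypothesis $s \in S$, and handling the possibility that $\int f^s\,d\mu$ and $\int g^s\,d\mu$ are individually infinite. The clean fix is to work separately on $\mathcal{X}_+ = \{f \geq g\}$ and $\mathcal{X}_- = \{f < g\}$, where the integrand has a constant sign and Tonelli applies unconditionally, and to carry the finite quantity $\int_{\mathcal{X}}(f^{s_o}-g^{s_o})\,d\mu$ as the anchor that keeps the bookkeeping finite; a secondary minor point is verifying that $S$ is an interval-like set closed upward past $s_o$ in the relevant sense so that the statement "for each $s>s_o$, $s\in S$" is not vacuous where it matters. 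Everything else is the elementary monotone-ratio trick displayed above.
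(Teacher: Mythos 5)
Your proof is correct, and it takes a genuinely different route from the paper's. You essentially reconstruct the original Nazarov--Podkorytov argument: after the layer-cake identity $\int_\mathcal{X}(f^s-g^s)\,d\mu = \int_0^\infty \bigl(F(u)-G(u)\bigr)\,s u^{s-1}\,du$ (whose Fubini justification works exactly as you sketch, by splitting on $\{f\geq g\}$ versus $\{f<g\}$ and using the finiteness of $F$ and $G$ to identify $F(u)-G(u)$ with the $\mu$-integral of $\mathbbm{1}_{\{f>u\}}-\mathbbm{1}_{\{g>u\}}$), you exploit the strict monotonicity of the kernel ratio $u\mapsto \psi_s(u)/\psi_{s_o}(u)$: the combination $\psi_s - c\,\psi_{s_o}$, calibrated to vanish at $\lambda_o$, then has the same sign pattern as $D = F-G$, so $D\cdot(\psi_s-c\,\psi_{s_o})\geq 0$ pointwise and integrates to the desired quantity, and the equality case drops out because this nonnegative integrand can vanish a.e.\ only if $D$ does, whence $D\equiv 0$ by right-continuity of distribution functions. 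The paper instead derives this statement, as Corollary~\ref{cor:NP}, through its majorization machinery: Lemma~\ref{lem-intro: single crossing implies majorization} shows the single-crossing hypothesis forces $\Phi(t)=\int_t^\infty (F-G)\,d\lambda$ to be unimodal with $\Phi(0)=\lim_{t\to\infty}\Phi(t)=0$, hence $\Phi\geq 0$, which by Theorem~\ref{thm: chong} is exactly the convex order $g^{s_o}\prec_\mu f^{s_o}$, and the $L^s$ comparison follows by testing against the single convex function $\phi(x)=x^{s/s_o}$. Both arguments hinge on the same sign change at $\lambda_o$, but the paper's route deliberately surfaces the underlying majorization (which buys a comparison against every convex $\phi$, not just powers), while yours is more self-contained and delivers the equality statement directly, a point Corollary~\ref{cor:NP} does not record. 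One minor remark: you need not worry about whether $S$ is an interval, since your argument treats each admissible $s$ independently and the claim is vacuous for $s\notin S$.
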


 It has attracted attention and found utility as a tool for delivering relatively simple arguments for $L^p$ norm comparisons between functions that would otherwise be very challenging to compare\footnote{In fact, Nazarov and Podkorytov proved a stronger result, that the function
$$
s \mapsto \varphi(s) \coloneqq \frac{1}{s\lambda_o^s} \int_\mathcal{X} (f^s - g^s) d\mu 
$$
is increasing on $S$.
However in applications, this monotonicity result has yet to find utility outside of the context of Lemma \ref{lem:NP}. Further, Nazarov-Podkorytov's lemma was used, to the best of our knowledge, only with $\mathcal{X}=(0,\infty)$, or $\mathbb{R}$.}.

The change of sign between the distribution functions $F$ and $G$ that appears in Lemma \ref{lem:NP}
is known in the literature as the \emph{single crossing property}. It is hard to give a sure attribution of this terminology. We could find its definition in a paper by Diamond and Estiglitz \cite[Page 3]{diamond}, in economy. However, such a property, with no specific name, was used earlier in probability theory, see \textit{e.g.}\ Karlin's book \cite{karlin}. Moreover, it appears that Nazarov-Podkorytov's lemma is essentially already contained, inter alia, in \cite{karlin-novikoff}, though this paper does not state it as clearly. 
{ Such a lemma is essentially part of the folklore  and is often re-derived on an ad-hoc basis. It is worth mentioning that Nazarov and Podkorytov themselves do no pretend at any novelty. Besides the papers already quoted above, let us mention \cite{MOP,MP} for a few other places where the reader can find similar statements.}

In fact, \cite{karlin-novikoff} also holds the majorization interpretation (Lemma \ref{lem-intro: single crossing implies majorization}) that was at the starting point of our investigations, even if we realized the existence of \cite{karlin-novikoff} only after the writing of the present article was complete\footnote{Confirming the law that mathematicians often rediscover results known for a long time... In the field of functional inequalities, the logarithmic Sobolev inequality of Shannon-Gross is a striking example.}.

We direct the reader to \cite{astashkin2021majorization} for recent extensions of Nazarov-Podkorytov's lemma to interpolation spaces using majorization.

The above lemma was the starting point of  Nazarov and Podkorytov's idea of unifying and
re-deriving, in a very elegant way, very deep results of Ball on sections of the unit cube \cite{ball} and of Haagerup on sharp constants in Kintchin's inequalities \cite{haagerup}.

In Ball's approach of the {cube} slicing problem, one key ingredient is to prove that
$$
\int_\mathbb{R} e^{-s\pi x^2/2} - \left| \frac{\sin(\pi x)}{ \pi x} \right|^s dx > 0 
$$
for all $s>2$, observe that for $s=2$ this is an identity. We may refer to it as \emph{Ball's integral inequality}. 
{Here and below $dx$ stands for the integration against the Lebesgue measure}. For the second statement the authors had to use a modified lemma (of the same spirit but with $F_*(y):=\mu(\{x : f(x) < y\})$ for $f$ bounded) to prove that
$$
\int_0^\infty \left(e^{-s\frac{x^2}{2}}-|\cos x|^s \right) \frac{dx}{x^{p+1}} \geq 0  
$$
for all $s \geq 2$, all $p \in [p_o,2)$ (for some well defined $p_o \simeq 1.85$, see \cite{haagerup}), an inequality due to Haagerup that we may call \emph{Haagerup's integral inequality}.

Besides Ball-Haagerup's integral inequalities, we mention that Nazarov and Podkorytov's lemma was exploited by different authors. Namely in a  refined version of Ball's integral inequality by K\"onig and Koldobsky \cite{konig-koldobsky}, with application to Busemann-Petty problem for the surface area, in his study of optimal Khinchin's inequality for $p \in (2,3)$
by Mordhorst \cite{Mordorst}.
It
was also used in \cite{MMR20} to compare the Fourier transform of a Bernoulli random variable to a Gaussian.

The next lemma, that goes back to \cite{karlin-novikoff}, gives a majorization interpretation of Nazarov and Podkorytov's lemma. Our aim in this article is to introduce, in the Lebesgue case, an alternative based on transport arguments. Both constitute practical tools to prove majorization (between two integrable functions) that is, in many situations, a hard task.

\begin{lem}[\cite{karlin-novikoff}] \label{lem-intro: single crossing implies majorization}
Let $f$ and $g$ be any two non-negative functions on a measure space  $(\mathcal{X},\mathcal{A},\mu)$ satisfying {$\int (f - g)d\mu = 0$}. Let $F$ and $G$ be their distribution functions. 
Assume that at some point $\lambda_o$ the difference $F-G$ changes sign from $-$ to $+$, \textit{i.e.},
$F(\lambda) \leq G(\lambda)$ for all $\lambda \in (0,\lambda_o)$ and $F(\lambda) \geq G(\lambda)$ for all $\lambda > \lambda_o$. {Then 
$$
g  \prec_\mu f , 
$$
and consequently $\int_{\mathcal{X}} (\phi(f) - \phi(g)) d \mu \geq 0$ when $\phi$ is convex}.
\end{lem}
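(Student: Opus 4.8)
The plan is to reduce the convex-order statement to a pointwise comparison of integrated distribution functions, the classical Hardy--Littlewood--Pólya / Chong characterization. First I would recall that for a convex $\phi \colon [0,\infty) \to \mathbb{R}$, one may write $\phi$ (up to an affine function, which integrates to zero against $f\#\mu - g\#\mu$ by the hypothesis $\int(f-g)\,d\mu = 0$) as a nonnegative superposition of the elementary convex functions $t \mapsto (t-\lambda)_+$, $\lambda \ge 0$, via $\phi(t) = \phi(0) + \phi'_+(0)\,t + \int_0^\infty (t-\lambda)_+ \,d\phi'_+(\lambda)$, where $d\phi'_+$ is the (nonnegative) Stieltjes measure of the increasing function $\phi'_+$. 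Hence it suffices to prove $\int_{\mathcal{X}} (f-\lambda)_+\,d\mu \ge \int_{\mathcal{X}} (g-\lambda)_+\,d\mu$ for every $\lambda \ge 0$, together with the equality $\int f\,d\mu = \int g\,d\mu$ which handles the affine part. A Fubini argument rewrites each truncated integral in terms of the distribution function: $\int_{\mathcal{X}} (f-\lambda)_+\,d\mu = \int_\lambda^\infty F(t)\,dt$, and similarly for $g$ and $G$. So the entire claim becomes: $\int_\lambda^\infty F(t)\,dt \ge \int_\lambda^\infty G(t)\,dt$ for all $\lambda \ge 0$.

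Next I would verify this integrated inequality using the single-crossing hypothesis. Define $H(\lambda) \coloneqq \int_\lambda^\infty (F(t) - G(t))\,dt$. The normalization $\int(f-g)\,d\mu = 0$ gives $\int_0^\infty (F-G)\,dt = 0$, i.e. $H(0) = 0$; also $H(\infty) = 0$. Differentiating, $H'(\lambda) = -(F(\lambda) - G(\lambda))$, which by the sign-change assumption is $\ge 0$ on $(0,\lambda_o)$ and $\le 0$ on $(\lambda_o,\infty)$. Thus $H$ increases up to $\lambda_o$ and decreases afterwards; since $H(0) = H(\infty) = 0$, we get $H(\lambda) \ge 0$ for all $\lambda \ge 0$, which is exactly what we need. (One should be slightly careful that $F, G$ need not be integrable near $0$ in general, but finiteness of $F(\lambda), G(\lambda)$ for $\lambda > 0$ together with $\int_0^\infty(F-G) = 0$ being assumed finite makes all these manipulations legitimate; the truncated integrals $\int_\lambda^\infty F$ could be $+\infty$, but then the inequality is trivial, and on the set where both are finite the monotonicity argument applies verbatim.)

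Finally I would assemble the pieces: for convex $\phi$, writing it as above and integrating against $d\mu$, the affine part contributes $\phi(0)(\mu(\mathcal{X}) - \mu(\mathcal{X})) + \phi'_+(0)(\int f - \int g) = 0$ (if $\mu(\mathcal{X}) = \infty$ one restricts attention to convex $\phi$ with $\phi(0)=0$, or argues on the signed measure $f\#\mu - g\#\mu$ directly as in Definition \ref{defn: Majorization}), and the integral part contributes $\int_0^\infty \big(\int_\lambda^\infty (F-G)\,dt\big)\,d\phi'_+(\lambda) \ge 0$ by the previous step and Tonelli. This yields $\int \phi(f)\,d\mu \ge \int \phi(g)\,d\mu$, i.e. $g \prec_\mu f$. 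The main obstacle I anticipate is not the soft structure of the argument but the bookkeeping around integrability: making sure the representation of $\phi$ as a superposition of hinge functions is valid in the generality needed, and that all the Fubini/Tonelli exchanges and the identity $H(0)=0$ are justified under the stated hypotheses (finiteness of $F(\lambda),G(\lambda)$ for each $\lambda>0$ and $f-g \in \mathbb{L}^1$), rather than under the stronger assumption that $f$ and $g$ are themselves integrable. Handling the equality case (that equality in the convex order forces $F \equiv G$) requires additionally choosing a strictly convex $\phi$ and tracing when $H \equiv 0$.
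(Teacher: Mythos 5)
Your proposal is correct and takes essentially the same route as the paper. The paper's own proof sets $\sigma = f\#\mu - g\#\mu$, notes $\sigma(t,\infty) = F(t)-G(t)$, shows $\Phi(t) \coloneqq \int_t^\infty \sigma(\lambda,\infty)\,d\lambda$ is nonnegative via the single-crossing hypothesis combined with $\Phi(0)=\lim_{t\to\infty}\Phi(t)=0$, and then invokes the equivalence recorded in Theorem~\ref{thm: chong}; your argument is the same, except you re-derive inline the Hardy--Littlewood--P\'olya/Chong equivalence (the representation of a convex $\phi$ as an affine function plus a nonnegative superposition of hinge functions $(t-\lambda)_+$, and the Fubini identity $\int (f-\lambda)_+\,d\mu = \int_\lambda^\infty F$) rather than citing it as a black box. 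One very small remark: the final sentence about the equality case and strictly convex $\phi$ addresses a conclusion that appears in Lemma~\ref{lem:NP} but is not actually part of the statement of Lemma~\ref{lem-intro: single crossing implies majorization}, so that step is unnecessary here.
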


In other words, {\emph{the single crossing property implies majorization}.
In the next lemma we will prove that \emph{transport implies majorization} 
leading to the same conclusion under very different conditions (see Remark \ref{rem:tm}).}

We anticipate on the fact that the conclusion of Nazarov and Podkorytov's lemma (Lemma \ref{lem:NP}, for integrable functions) is then a straight forward consequence of Lemma \ref{lem-intro: single crossing implies majorization} thanks to a well-known equivalent formulation of the majorization (see Section \ref{sec:majorization}).

In order to state our second lemma, we { need to introduce some notations and definition. We} use the notation $|T'(x)|$ to denote the absolute value of the determinant of the linear map $T'(x)$ obtained as the derivative of a function $T: \mathbb{R}^n \to \mathbb{R}^n$. 
{ 

\begin{defn}[Change of variables]
Let $\phi,\psi \colon \mathbb{R}^n \to \mathbb{R}_+$ be measurable with $\int \phi(x) dx < \infty$. We say that a function $T: \mathbb{R}^n \to \mathbb{R}^n$ is a change of variables between $\phi$ and $\psi$ if $T$ is locally-Lipschitz, injective and satisfies Lebesgue almost everywhere $\phi = \psi(T)|T'|$.
\end{defn}
The definition implies that, for any measurable function $h$, it holds
$\int h(T(x)) \phi(x) dx = \int h(x) \psi(x) dx$. 
Change of variables are related to transport of mass.
We refer the reader to Section \ref{sec:majorization} for comments on the existence and regularity of changes of variables  and to the book by Villani \cite{villani2009optimal} for an introduction to the field of optimal transport. We stress that the existence of a change of variables implies the conservation of mass $\int \phi(x) dx= \int \psi(x)dx$.

In the sequel $\infess$ and $\supess$ denotes the essential infimum and essential supremum, respectively, namely for a measurable function $f \colon \mathbb{R}^n \to \mathbb{R}$, 
$\supess f = \inf \{a : f(x) \leq a \mbox{ for Lebesgue almost all } x \in \mathbb{R}^n\}$ (with $\supess f = \infty$ if the set is empty) and similarly for $\infess$.}

\begin{lem} \label{lem-intro: King Transport-Majorization}
Let $\mu(dx) = u(x) dx$ be a (not necessarily finite)  measure on $\mathbb{R}^n$, $f,g : \mathbb{R}^n \to \mathbb{R}_+$ be non-negative {measurable} such that $\int g d\mu < \infty$, and {assume that there exists  a change of variables $T: \mathbb{R}^n \to \mathbb{R}^n$ from $ug$ and $uf$} satisfying ${\infess}  \frac{u}{u(T)|T'|}  \geq 1$. {Then,
$$
g \prec_\mu f.
$$
}
\end{lem}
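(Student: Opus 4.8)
The plan is to reduce the claim $g\prec_\mu f$ to a pointwise inequality between convex functions after performing the change of variables. Fix a convex function $\phi:[0,\infty)\to\mathbb{R}$. We want to show $\int \phi(g)\,d\mu \le \int \phi(f)\,d\mu$, i.e. $\int \phi(g(x))u(x)\,dx \le \int \phi(f(x))u(x)\,dx$. The natural first move is to apply the change of variables $T$ in the left-hand integral. Writing $h(x) := \phi(g(x))$ and using the identity $\int h(T(x))\phi_{\mathrm{dens}}(x)\,dx = \int h(x)\psi_{\mathrm{dens}}(x)\,dx$ recorded right after the definition — here with $\phi_{\mathrm{dens}} = ug$ and $\psi_{\mathrm{dens}} = uf$ — we get $\int \phi(g(T(x)))\,u(x)g(x)\,dx = \int \phi(g(x))\,u(x)f(x)\,dx$. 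Hmm, this isn't quite in the form I want, because $\phi$ is applied to $g$ not to a weight; the cleaner approach is to push forward the measure directly.

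So instead I would argue as follows. By the a.e.\ identity $u\,g = u(T)\,f(T)\,|T'|$ (the change-of-variables relation), and the injectivity and local Lipschitz property of $T$, we have for any nonnegative measurable $h$,
\[
\int h(T(x))\,u(x)\,g(x)\,dx = \int_{T(\mathbb{R}^n)} h(y)\,u(y)\,f(y)\,dy \le \int h(y)\,u(y)\,f(y)\,dy.
\]
Now I want to choose $h$ so that $h\circ T$ relates to $\phi\circ g$. Set $h = \phi(f)/\mathbf{1}$... the obstruction is that $g$ and $f\circ T$ need not agree. But in fact the point of the hypothesis ${\infess}\,\frac{u}{u(T)|T'|}\ge 1$ is exactly this: rewriting the change of variables as $g(x) = f(T(x))\cdot \frac{u(T(x))|T'(x)|}{u(x)} \le f(T(x))$ a.e., since the displayed essential-infimum condition says $\frac{u(T(x))|T'(x)|}{u(x)} \le 1$ a.e. So $g \le f\circ T$ pointwise a.e.

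With that pointwise bound in hand the rest is monotonicity plus the change of variables. Since $\phi$ is convex on $[0,\infty)$ it need not be monotone, so I cannot directly use $g\le f\circ T$; instead I use that $g\prec_\mu f$ only needs testing against convex $\phi$, and for such $\phi$ I would split via the fact that a convex function is a supremum of affine functions, or more simply reduce to the known Lemma~\ref{lem-intro: single crossing implies majorization} or to the distribution-function criterion. Concretely: the pointwise inequality $g \le f\circ T$ together with $\int g\,d\mu < \infty$ and the mass conservation $\int ug\,dx = \int uf\,dx$ forces $f\circ T = g$ plus a rearrangement that is more spread out; more carefully, for convex $\phi$ write $\phi(g) = \phi\big((1-\theta)\cdot 0 + \theta (f\circ T)\big)$ where... no. The clean way: apply the change of variables to get $\int \phi(g(T(x)))u(x)g(x)\,dx = \int \phi(g(x))u(x)f(x)\,dx$ is a dead end; rather, observe $\int \phi(f)\,d\mu - \int \phi(g)\,d\mu = \int \big(\phi(f(x)) - \phi(f(T^{-1} \text{-ish}))\big)$...

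Let me restate the intended clean argument: define the measure $\bar\mu := u\,\mathrm{Leb}$ on $\mathbb{R}^n$, and note $T$ transports $\nu_1 := g\,\bar\mu$ to a measure $\le f\,\bar\mu =: \nu_2$ after composition, with $g \le f\circ T$ a.e. Then for convex $\phi$,
\[
\int \phi(g)\,d\bar\mu = \int \phi(g(x))\,u(x)\,dx,
\]
and I want to compare this to $\int\phi(f(x))u(x)\,dx$. Using $u(x) \ge u(T(x))|T'(x)|$ a.e.\ and the change of variables in the form $\int \psi(x) u(T(x))|T'(x)|\,dx = \int \psi(T^{-1}(y)) u(y)\,dy$ over $T(\mathbb{R}^n)$, take $\psi(x) = \phi(g(x))$: then
\[
\int \phi(g(x))\,u(x)\,dx \ge \int \phi(g(x))\,u(T(x))|T'(x)|\,dx = \int_{T(\mathbb{R}^n)} \phi\big(g(T^{-1}(y))\big)\,u(y)\,dy,
\]
and since $g\circ T^{-1} \le f$ on $T(\mathbb{R}^n)$... still need monotonicity of $\phi$.

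Therefore the genuinely correct route, which I would write out, is: the change of variables gives that the distribution function of $g$ with respect to $\mu$ is pointwise $\ge$ that of $f$ scaled appropriately, OR simpler — use that $g\le f\circ T$ a.e.\ and that $T$ is measure-non-increasing to conclude directly that the distribution functions satisfy the single crossing hypothesis of Lemma~\ref{lem-intro: single crossing implies majorization} with crossing point $\lambda_o = \supess g$, reducing to the already-proven lemma. The main obstacle, and the step I expect to require the most care, is handling the non-monotonicity of the convex test function $\phi$ together with the fact that $T$ need not be surjective (so $T(\mathbb{R}^n)$ may be a proper subset), which is precisely where the one-sided hypothesis $\int g\,d\mu<\infty$ and the essential-infimum condition combine; I would address it by first establishing the a.e.\ inequality $g \le f\circ T$, then invoking the change-of-variables identity against the convex function $\phi$ composed with a truncation, and finally passing to the limit.
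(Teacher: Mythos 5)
You correctly extract the key pointwise consequence of the hypothesis: writing $s(x) := \frac{u(T(x))|T'(x)|}{u(x)}$, the change-of-variables identity $u g = u(T) f(T) |T'|$ gives $g(x) = s(x)\,f(T(x))$ with $s(x)\le 1$ a.e., hence $g \le f\circ T$ a.e. But from there your argument never closes, and the various routes you sketch do not work as stated. Feeding $g\le f\circ T$ into a generic convex $\phi$ is blocked by non-monotonicity, as you note; however, the single-crossing reduction you then propose is not established and is not true with crossing point $\lambda_o = \supess g$ (the actual crossing point, if one exists, depends on how $T$ redistributes mass across level sets), and the inequality in your displayed computation goes in the wrong direction and additionally requires $\phi\ge 0$. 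The final sentence (``$\phi$ composed with a truncation, and finally passing to the limit'') is a placeholder, not an argument.

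The missing idea is to not test against arbitrary convex $\phi$ at all, but only against hockey-stick functions $\phi_t(x) = [x-t]^+$, which suffice by item~\eqref{eq: chong hockey stick} of Theorem~\ref{thm: chong}. These are convex and vanish at $0$, hence satisfy $\phi_t(sx)\le s\,\phi_t(x)$ for $s\in[0,1]$, $x\ge 0$. This sublinearity — not monotonicity — is what makes the weight bookkeeping work out: with $g = s\,f(T)$ and $s\le 1$,
\[
\int [g-t]^+\,u\,dx \;=\; \int \big[s\,f(T)-t\big]^+ u\,dx \;\le\; \int s\,[f(T)-t]^+ u\,dx \;=\; \int [f(T)-t]^+\,u(T)\,|T'|\,dx \;=\; \int [f-t]^+ u\,dx,
\]
where the last step is the change of variables. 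If you only use $g\le f\circ T$ and monotonicity of $\phi_t$, you land on $\int[f(T)-t]^+ u\,dx$ with the wrong weight $u$ instead of $u(T)|T'|$, and cannot change variables — precisely the impasse you reached. So the proposal identifies the right preliminary reduction but omits the central trick and does not constitute a proof.
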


{A more general version of the above lemma will be stated in Section \ref{sec:majorization-different-measure}.}

As a simple but useful application, let us restate the conclusion of the above result in the spirit of Nazarov and Podkorytov's lemma (take 
$u \equiv 1$ and $\varphi(x)=x^{s/s_o}$ in Lemma \ref{lem-intro: King Transport-Majorization}, details are left to the reader).

 \begin{lem} \label{lem-intro: transport}
 	Let $f,g: \mathbb{R}^n \to [0,\infty)$ be { measruable} such that there exists $T: \mathbb{R}^n \to \mathbb{R}^n$ and $s_o>0$ such that { $T$ is a change of variables from $g^{s_o}$ and $f^{s_o}$, with} $|T'(x)| \leq 1$ and $\int g^{s_o}(x) dx = \int f^{s_o}(x)dx < \infty$. Then
 	$\int g^s(x) dx \leq \int f^s(x) dx$ for all $s \geq s_0$.
 \end{lem}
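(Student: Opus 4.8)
Lemma \ref{lem-intro: transport} is an immediate specialization of Lemma \ref{lem-intro: King Transport-Majorization}, so the proof is essentially a bookkeeping exercise in substitutions. First I would set $u \equiv 1$ on $\mathbb{R}^n$, so that $\mu$ becomes Lebesgue measure. The hypothesis that $T$ is a change of variables from $g^{s_o}$ to $f^{s_o}$ with $|T'(x)| \le 1$ says exactly that $T$ is locally-Lipschitz, injective, and satisfies $g^{s_o} = f^{s_o}(T)\,|T'|$ Lebesgue-a.e.; since $u \equiv 1$, the quantity $\tfrac{u}{u(T)\,|T'|}$ equals $\tfrac{1}{|T'|} \ge 1$ everywhere, so $\infess \tfrac{u}{u(T)\,|T'|} \ge 1$ holds. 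Applying Lemma \ref{lem-intro: King Transport-Majorization} with $(g, f)$ replaced by $(g^{s_o}, f^{s_o})$ — which are non-negative and measurable, and satisfy $\int g^{s_o}\,dx < \infty$ by assumption — yields $g^{s_o} \prec_\mu f^{s_o}$, i.e. $\int \phi(g^{s_o})\,dx \le \int \phi(f^{s_o})\,dx$ for every convex $\phi \colon [0,\infty) \to \mathbb{R}$.

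It remains to upgrade this to the conclusion for every $s \ge s_o$. The natural choice is $\varphi(t) = t^{s/s_o}$. Since $s/s_o \ge 1$, the function $\varphi$ is convex and non-negative on $[0,\infty)$, so the majorization gives
\[
\int g^{s}(x)\,dx = \int \varphi\bigl(g^{s_o}(x)\bigr)\,dx \le \int \varphi\bigl(f^{s_o}(x)\bigr)\,dx = \int f^{s}(x)\,dx,
\]
which is the claim. The assumption $\int g^{s_o}\,dx = \int f^{s_o}\,dx$ is recorded for consistency (it follows automatically from the existence of the change of variables, as noted after the definition), and it guarantees that both sides of the displayed chain are well-defined as elements of $[0,\infty]$; when $s = s_o$ the inequality is in fact the stated equality.

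**Main obstacle.** There is no serious obstacle: the content of the lemma is entirely contained in Lemma \ref{lem-intro: King Transport-Majorization}, and what remains is to verify that $t \mapsto t^{s/s_o}$ is convex and that the change-of-variables hypotheses transfer verbatim after raising $f,g$ to the power $s_o$. The only point requiring a word of care is integrability — one should note that $f^{s}$ might have infinite integral, in which case the inequality holds trivially, while the hypothesis $\int g^{s_o}\,dx < \infty$ together with $g^{s} \prec_\mu$-domination keeps the left side controlled; this is precisely the situation the signed-measure formulation of majorization was designed to handle, as remarked in the discussion following Lemma \ref{lem:NP}.
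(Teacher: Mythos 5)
Your proof is correct and follows precisely the route the paper indicates: it specializes Lemma~\ref{lem-intro: King Transport-Majorization} to $u\equiv 1$, applies it to the pair $(g^{s_o}, f^{s_o})$ so that the hypothesis $|T'|\le 1$ gives $\infess \frac{u}{u(T)|T'|}\ge 1$, and then tests the resulting majorization against the convex function $t\mapsto t^{s/s_o}$. The paper explicitly states the lemma as a consequence of Lemma~\ref{lem-intro: King Transport-Majorization} with these very substitutions and leaves the details to the reader, which are exactly what you have supplied.
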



As a second application of Lemma \ref{lem-intro: King Transport-Majorization}, with the {help} of the celebrated Cafarelli's contaction Theorem, we will prove that any strongly log-concave density majorizes the standard Gaussian density (see Section \ref{sec:majorization}), a result that is very natural and, to the best of our knowledge, was not known.
Related to information theory, as a corollary, we will deduce that the Gaussian distribution maximizes, among all strongly log-concave distributions, the R\'enyi and Tsallis entropies of all orders.  The R\'enyi entropy \cite{renyi1961measures} unifies the Shannon, min, Hartley, and collision entropies, and has been long used in information theory, see the survey \cite{van2014renyi} for more background.  The Tsallis entropy \cite{tsallis1988possible} is an alternative generalization of the Shannon entropy proven useful in statistical mechanics in the last few decades.

To further demonstrate the efficacy of  Lemma \ref{lem-intro: King Transport-Majorization} (and in practice Lemma \ref{lem-intro: transport}), in Section \ref{sec:transportBall} we give simple proofs of some integral inequalities previously derived through the method of Nazarov and Podkorytov.  We also use Lemma \ref{lem-intro: transport} to derive the following main result.

\begin{thm}\label{thm: discrete Ball's inequality}
     For $p \geq 2$, and $2 \leq n \in \mathbb{N}$,
    \begin{align*}
        \int_{-\frac 1 2}^{\frac 1 2} \left|\frac{\sin( n\pi x)}{n \sin \pi x} \right|^p dx 
        < \sqrt{\frac{2}{p \ (n^2-1)}}
    \end{align*}
\end{thm}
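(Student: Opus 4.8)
The plan is to deduce Theorem \ref{thm: discrete Ball's inequality} from Lemma \ref{lem-intro: transport} applied with the choice $s_o = 2$, where the role of $g$ is played by the (rescaled) Dirichlet-type kernel $D_n(x) \coloneqq \frac{\sin(n\pi x)}{n\sin\pi x}$ on the interval $(-\tfrac12,\tfrac12)$ and the role of $f$ is played by a Gaussian bump $f(x) \coloneqq c\, e^{-\pi^2(n^2-1)x^2/2}$ on all of $\mathbb{R}$, with the constant $c$ chosen so that $\int_{\mathbb{R}} f^2 = \int_{-1/2}^{1/2} D_n^2$. First I would record the exact $L^2$ identity: a direct computation (orthogonality of the characters $e^{2\pi i k x}$, or the fact that $D_n$ is the normalized Fejér/Dirichlet kernel) gives $\int_{-1/2}^{1/2} \left|\frac{\sin(n\pi x)}{n\sin\pi x}\right|^2 dx = \frac1n$, which pins down $c$ via $\int_{\mathbb{R}} c^2 e^{-\pi^2(n^2-1)x^2} dx = c^2/(\pi\sqrt{n^2-1}) = 1/n$, i.e.\ $c^2 = \pi\sqrt{n^2-1}/n$. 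Then once the transport hypothesis is verified, Lemma \ref{lem-intro: transport} yields $\int_{-1/2}^{1/2} |D_n|^p \le \int_{\mathbb{R}} f^p = c^p \int_{\mathbb{R}} e^{-p\pi^2(n^2-1)x^2/2} dx = c^p \sqrt{\tfrac{2}{p\pi(n^2-1)}}$ for all $p \ge 2$; substituting $c^2 = \pi\sqrt{n^2-1}/n$ and simplifying should give exactly the bound $\sqrt{2/(p(n^2-1))}$, with a strict inequality coming from the strictness in the transport comparison (the map is not measure-preserving pointwise, or $D_n^2 \not\equiv f^2$).

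The heart of the matter is producing the change of variables $T$ from $(D_n)^2$ (extended by zero outside $(-\tfrac12,\tfrac12)$) to $f^2$ with $|T'(x)| \le 1$, equivalently a locally-Lipschitz injective $T$ with $(D_n(x))^2 = f(T(x))^2 |T'(x)|$ a.e.\ and $|T'|\le 1$. In one dimension this is the monotone rearrangement map: one takes $T$ to be the (increasing) transport of the probability density $n D_n^2$ on $(-\tfrac12,\tfrac12)$ onto the probability density $n f^2$ on $\mathbb{R}$, characterized by equality of distribution functions $\int_{-1/2}^{x} n D_n(t)^2 dt = \int_{-\infty}^{T(x)} n f(t)^2 dt$. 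Differentiating gives $D_n(x)^2 = f(T(x))^2 T'(x)$ with $T'>0$, so the requirement $|T'|\le 1$ becomes the pointwise inequality $D_n(x)^2 \le f(T(x))^2$, and since $T$ maps $(-\tfrac12,\tfrac12)$ into $\mathbb{R}$ this would follow if one can show, after the change of variables, that the Gaussian $f^2$ dominates the Dirichlet kernel squared ``in the right place.'' A cleaner route, avoiding explicit knowledge of $T$, is to use the single-crossing / majorization formulation: it suffices to check that the distribution functions of $D_n^2$ (on $(-\tfrac12,\tfrac12)$) and of $f^2$ (on $\mathbb{R}$) cross exactly once, from which Lemma \ref{lem-intro: single crossing implies majorization} (or the construction of the monotone $T$) gives the needed comparison; but to land $|T'|\le 1$ one genuinely needs the inequality $D_n(x)^2 \le f(T(x))^2$ along the map, so some quantitative control is unavoidable.

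Thus the main obstacle is the pointwise estimate comparing the Dirichlet kernel to the Gaussian. Concretely I expect to reduce everything, via the transport relation, to an inequality of the shape $|D_n(x)| \le e^{-\pi^2(n^2-1)y(x)^2/2}\cdot(\text{normalizing constant})$ where $y = T(x)$ is determined by the CDF matching; the natural ansatz is that near $x=0$ one has the Taylor expansion $\log|D_n(x)| = -\tfrac{\pi^2(n^2-1)}{6}x^2 + O(x^4)$, which is consistent with, and slightly stronger than, Gaussian decay with the stated variance, so the ``hard part'' is controlling the tail region where $|D_n|$ is small and oscillatory but $f$ is also exponentially small. A standard device here, used by Ball and by Nazarov--Podkorytov, is to compare $\log|D_n(x)|$ against a parabola $-a x^2$ on $(-\tfrac12,\tfrac12)$ by checking concavity/convexity of $x \mapsto \log\frac{\sin n\pi x}{\sin\pi x}$ or by a term-by-term product estimate $\left|\frac{\sin n\pi x}{n\sin\pi x}\right| = \prod_{k=1}^{n-1}\left|\cos\pi x \cdot(\text{Chebyshev factors})\right|$ and bounding each factor by a Gaussian; I anticipate the bulk of the work (and the place where the constant $n^2-1$ must be handled with care, since $\sum_{k=1}^{n-1} k^2 \sim n^3/3$ but the relevant sum is $\sum (2k-1) = (n-1)n$ or similar) is in making that product-vs-Gaussian bound sharp enough to survive after the CDF change of variables, and in verifying the crossing is genuinely single so that the majorization machinery applies cleanly.
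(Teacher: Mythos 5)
There is a genuine gap in the normalization step, and it is structural rather than cosmetic. You propose to compare $D_n^2$ on $(-\tfrac12,\tfrac12)$ with a multiplicatively rescaled Gaussian $f(x)=c\,e^{-\pi^2(n^2-1)x^2/2}$ supported on all of $\mathbb{R}$, with $c$ fixed by $\int f^2=\int D_n^2=\tfrac1n$. Two things go wrong. First, the arithmetic: the exponent should carry $\pi$, not $\pi^2$, if you want $\int_{\mathbb{R}}f^p$ to land on the theorem's right-hand side; with your choice one gets $\int f^p=c^p\sqrt{2/(p\pi(n^2-1))}$ and the mass-matched $c$ satisfies $c^2=\sqrt{\pi}\sqrt{1-1/n^2}$, so $c^p/\sqrt{\pi}>1$ for $p>2$ and large $n$, and the resulting bound is strictly weaker than the claim. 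Second, and more fundamentally: even after correcting to $f(x)=c\,e^{-\pi(n^2-1)x^2/2}$ (which makes $c^2=\sqrt{n^2-1}/n<1$ and $\int f^p<\sqrt{2/(p(n^2-1))}$, so the numerology works), the target density $c^2 e^{-\pi(n^2-1)x^2}$ has full support on $[0,\infty)$ while $D_n^2$ is supported on $[0,\tfrac12]$. The monotone transport $T$ therefore sends $[0,\tfrac12)$ onto $[0,\infty)$, and near $x=\tfrac12$ one has $T(x)\to\infty$ while $D_n^2(x)$ vanishes at most polynomially (to order $2$ for $n$ even, and not at all for $n$ odd), whereas $f(T(x))^2$ decays like $e^{-cT(x)^2}$. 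Matching the tails via the CDF relation forces $T'(x)=D_n^2(x)/f(T(x))^2\to\infty$ as $x\to\tfrac12$. So the hypothesis $|T'|\le 1$ of Lemma \ref{lem-intro: transport} \emph{cannot} hold for this choice of $f$, independently of how much work one invests in the ``bulk'' estimate.

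The paper's remedy is exactly what your proposal omits: keep the Gaussian's functional form fixed at $e^{-\pi(n^2-1)x^2}$ (no multiplicative constant), work with $s_o=1$, and truncate its domain to $[0,A]$ where $A<\infty$ is chosen so $\int_0^A e^{-\pi(n^2-1)x^2}dx=\tfrac1{2n}=\int_0^{1/2}D_n^2$. This is possible precisely because $\tfrac1n<\tfrac1{\sqrt{n^2-1}}$. Now the monotone transport maps $[0,\tfrac12]$ onto the compact interval $[0,A]$, so the boundary behavior is tame, and the real work (the paper's Lemma \ref{lem:T'discrete} together with Lemmas \ref{lem:James}, \ref{lem:final}, \ref{lem: reform in terms of erfc}) is a careful case analysis proving $T'\le 1$ on $[0,\tfrac12]$. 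With this in hand, Lemma \ref{lem-intro: transport} gives $\int_0^{1/2}|D_n|^{2s}\le\int_0^A e^{-s\pi(n^2-1)x^2}dx<\int_0^\infty e^{-s\pi(n^2-1)x^2}dx=\tfrac1{2\sqrt{s(n^2-1)}}$, and doubling and setting $p=2s$ recovers the theorem. Note also that the strict inequality comes from $A<\infty$ in this last step, not from ``strictness in the transport comparison'' as you suggest; Lemma \ref{lem-intro: transport} by itself only yields a non-strict bound. Your remaining discussion of the single-crossing formulation and the product bound $\log|D_n(x)|\le -\tfrac{\pi^2(n^2-1)}{6}x^2$ on $(0,1/n)$ is on the right track (the paper uses precisely this for the small-$x$ regime), but it is only one of several regimes, and you have not addressed how to handle the region near $x=\tfrac12$, which is where the truncation is essential.
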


We note that using the method of Nazarov-Podkorytov, Theorem \ref{thm: discrete Ball's inequality} was obtained for $n \geq N$ for a fixed $N \in \mathbb{N}$ in an unpublished work \cite{MMX2017unpublished}.
As a corollary of Theorem \ref{thm: discrete Ball's inequality}, we obtain sharp upper-bounds on the cardinality of $A_k \coloneqq \{ z \in \mathbb{Z}^n : z_i \in [0,n_i], z_1 + \cdots + z_n = k \}$.  The continuous version of this problem, upper-bounds on the volume of $\tilde{A}_\lambda \coloneqq \{ x \in \mathbb{R}^n : x_i \in [0,n_i], x_1 + \cdots + x_n = \lambda \}$ is equivalent to upper-bounds on slices of the cube, and it is in this sense that we consider Theorem \ref{thm: discrete Ball's inequality} to be a discrete analog of Ball's integral inequality.  Moreover, it will be shown by letting $n \to \infty$ in Theorem \ref{thm: discrete Ball's inequality} one {recovers} Ball's integral inequality.

{
As a final remark, we mention that some of our results can be extended to functions $f$ and $g$ leaving on different measure spaces $(\mathcal{X}, \mathcal{A}, \mu)$ and $(\mathcal{Y}, \mathcal{B}, \nu)$. For this we will need to restrict to convex functions vanishing at $0$ and to introduce the appropriate corresponding definition of majorization (see Section  \ref{sec:majorization-different-measure}).}


{
\subsection*{Acknowledgement}
We warmly thank an anonymous referee and the editor for their comments and suggestions that strongly improve the presentation of this paper.
We also thank Dario Cordero-Erausquin, Matthieu Fradelizi, Mokshay Madiman and Paul-Marie Samson for useful discussions on the literature and on the topic of this paper.
}

\tableofcontents


\section{Majorization and Transport}\label{sec:majorization}

This section collects some aspects of majorization related to  Nazarov and Podkorytov’s lemma.
We first recall some basic properties of majorization. Then we prove Lemma 
\ref{lem-intro: single crossing implies majorization}. In the next sub-sections we deal with the transport approach of majorization, {and, together with Cafarelli's contraction Theorem,  with strongly log-concave distributions. Finally, in the last sub-section we extend some of our results to functions $f$ and $g$ leaving on different measure spaces.}

{All along the section $(\mathcal{X}, \mathcal{A},\mu)$ denotes a measure space and $g$ and $f$ non-negative measurable functions $g,f:\mathcal{X} \to \mathbb{R}_+$.  Their respective distribution functions $G$ and $F$ denote $G(\lambda) = \mu \{ g > \lambda \}$ and $F(\lambda) = \mu \{ f > \lambda\}$.
}

\subsection{Basics on majorization}


For more background on majorization see \cite{marshall}.
The following theorem { is a reformulation in terms of a single signed measure, of well known, equivalent, useful descriptions of majorization, see for instance \cite{chong} for a classical reference.

\begin{thm} \label{thm: chong}
For a signed measure $\sigma$ on $[0,\infty)$ such that $\int_0^\infty L(x) d \sigma = 0$ for any affine function\footnote{We assume tacitly that the integrals are well defined, which will only be true if $\int L d|\sigma| < \infty$} $L(x)=ax+b$, the following are equivalent;

\begin{enumerate}
    \item  \label{eq: chong signed} $$\sigma \succ 0$$
    \item \label{eq: chong distribution} For $t \geq 0$, $$\int_t^\infty \sigma(\lambda,\infty) d \lambda \geq 0.$$
    \item \label{eq: chong hockey stick}
    For $t \geq 0$, $$\int_0^\infty [ x - t]^+ d\sigma(x) \geq 0 .$$
\end{enumerate}
\end{thm}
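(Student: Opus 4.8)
The plan is to prove the three-way equivalence by observing that \eqref{eq: chong distribution} and \eqref{eq: chong hockey stick} are in fact \emph{the same statement} once a Fubini exchange is carried out, that \eqref{eq: chong signed} $\Rightarrow$ \eqref{eq: chong hockey stick} is immediate from the definition of majorization, and that the only real content lies in the implication \eqref{eq: chong hockey stick} $\Rightarrow$ \eqref{eq: chong signed}, which I would obtain from the integral representation of a convex function through its second-derivative measure. (This is, up to normalization of $\sigma$ into the difference of two probability measures with a common mean, the classical equivalence between the convex order and the stop-loss / integrated-tail order; the point of writing it out is to have it in the signed-measure language used here.)

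First I would dispatch the easy parts. For \eqref{eq: chong signed} $\Rightarrow$ \eqref{eq: chong hockey stick}: for each fixed $t \geq 0$ the hockey-stick function $x \mapsto [x-t]^+$ is convex on $[0,\infty)$, so $\int_0^\infty [x-t]^+\, d\sigma \geq 0$ is just an instance of $\sigma \succ 0$. For \eqref{eq: chong distribution} $\Leftrightarrow$ \eqref{eq: chong hockey stick}: writing $[x-t]^+ = \int_t^\infty \mathbf{1}\{\lambda < x\}\, d\lambda$ and integrating against $\sigma$ in the variable $x$, one has $\int_0^\infty [x-t]^+\, d|\sigma| \leq \int_0^\infty x\, d|\sigma| < \infty$ (finiteness of the first moment being built into the hypothesis $\int L\, d|\sigma| < \infty$), so Fubini's theorem applies and gives
\[
  \int_0^\infty [x-t]^+\, d\sigma(x) = \int_t^\infty \Big( \int_0^\infty \mathbf{1}\{\lambda < x\}\, d\sigma(x) \Big)\, d\lambda = \int_t^\infty \sigma(\lambda,\infty)\, d\lambda ,
\]
so the two quantities coincide for every $t \geq 0$ and \eqref{eq: chong distribution} and \eqref{eq: chong hockey stick} are equivalent.

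For the main implication \eqref{eq: chong hockey stick} $\Rightarrow$ \eqref{eq: chong signed}, let $\phi \colon [0,\infty) \to \mathbb{R}$ be convex. Since subtracting an affine function $L$ alters neither hypothesis \eqref{eq: chong hockey stick} nor the value of $\int \phi\, d\sigma$ (because $\int L\, d\sigma = 0$), there is no loss in normalizing. The engine is the representation of $\phi$ through the Lebesgue--Stieltjes measure $\mu_\phi \geq 0$ of its nondecreasing right-continuous right-derivative $\phi'_+$: fixing any reference point $a > 0$,
\[
  \phi(x) = \phi(a) + \phi'_+(a)(x-a) + \int_{(a,\infty)} [x-t]^+\, d\mu_\phi(t) + \int_{(0,a]} [t-x]^+\, d\mu_\phi(t)
\]
for all $x \geq 0$. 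Integrating against $\sigma$: the affine term contributes $0$; the term over $(a,\infty)$ contributes $\int_{(a,\infty)} \big( \int [x-t]^+\, d\sigma(x)\big)\, d\mu_\phi(t) \geq 0$ by \eqref{eq: chong hockey stick} and $\mu_\phi \geq 0$; and for the term over $(0,a]$ I would use the elementary identity $[t-x]^+ = [x-t]^+ - (x-t)$, whence $\int [t-x]^+\, d\sigma(x) = \int [x-t]^+\, d\sigma(x) - \int(x-t)\, d\sigma(x) = \int [x-t]^+\, d\sigma(x) \geq 0$, again by \eqref{eq: chong hockey stick} together with $\int L\, d\sigma = 0$. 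Summing the three contributions gives $\int \phi\, d\sigma \geq 0$.

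The main obstacle is not conceptual but measure-theoretic bookkeeping. One must justify the two interchanges of integration against the signed measure $\sigma$; the clean way is to split $\sigma = \sigma_+ - \sigma_-$, apply Tonelli to each nonnegative piece (using $[x-t]^+ \leq x$ and $[t-x]^+ \leq t$ against the finite first moment of $|\sigma|$, and noting that the corresponding double integrals equal $\int(\phi-L)\,d\sigma_\pm$ on the relevant ranges), and then recombine — a recombination legitimate exactly when $\int \phi\, d\sigma$ is well defined in the sense tacitly assumed in the statement, and trivially true otherwise since then $\int\phi\, d\sigma = +\infty$. A second point of care is the boundary $x = 0$: a convex function on $[0,\infty)$ need not have a finite right-derivative there (for instance $-\sqrt{x}$), which is precisely why the representation above is anchored at a point $a > 0$ and why the ``downward'' hockey-sticks $[t-x]^+$ must be carried along and then re-expressed via \eqref{eq: chong hockey stick} and the mean-zero property. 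An alternative that sidesteps part of this is to first prove \eqref{eq: chong hockey stick} $\Rightarrow$ \eqref{eq: chong signed} for piecewise-linear convex $\phi$ — which are exactly finite sums of an affine function and nonnegative multiples of hockey-sticks $[x-t]^+$ — and then pass to a general convex $\phi$ by monotone approximation from below, invoking monotone convergence separately on $\sigma_+$ and $\sigma_-$.
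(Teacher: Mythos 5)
Your proposal is correct and takes the same hockey-stick route as the paper: Fubini identifies \eqref{eq: chong distribution} and \eqref{eq: chong hockey stick}, \eqref{eq: chong signed} $\Rightarrow$ \eqref{eq: chong hockey stick} is an instance of the definition, and \eqref{eq: chong hockey stick} $\Rightarrow$ \eqref{eq: chong signed} comes from decomposing a convex $\phi$ into an affine function plus a nonnegative mixture of hockey sticks. Where you genuinely improve on the paper's writeup is the last implication: the paper anchors its expansion at the origin, writing $\phi(x)=\phi(0)+x\phi'(0)+\int_0^\infty[x-t]^+\phi''(t)\,dt$ for smooth $\phi$ and then closes with a one-line ``by approximation''. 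As you point out, that anchoring is unavailable when $\phi'_+(0)=-\infty$ (e.g.\ $\phi(x)=-\sqrt{x}$), and the approximation step would have to cope with precisely this boundary pathology. Your fix---anchor the representation at a point $a>0$ via the Lebesgue--Stieltjes measure of $\phi'_+$, carry the backward sticks $[t-x]^+$ on $(0,a]$, and convert them via $[t-x]^+=[x-t]^+-(x-t)$ into forward sticks plus an affine piece annihilated by $\int L\,d\sigma=0$---makes the argument self-contained for an arbitrary convex $\phi$ on $[0,\infty)$ and subsumes the paper's smooth-then-approximate step. Your plan for the Fubini bookkeeping (split $\sigma=\sigma_+-\sigma_-$, dominate by $[x-t]^+\leq x$ and $[t-x]^+\leq t$ against the finite first moment of $|\sigma|$, apply Tonelli to each nonnegative piece) is standard and closes the one remaining gap.
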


Note that $\int_0^\infty L(x) d\sigma(x) = 0$ is a necessary condition for $\sigma \succ 0$ since $L$ and $-L$ are both convex functions.
\begin{proof}
    Note that the finite assumptions on $\sigma$ justify the change of order of integration,
    \[
        \int_t^\infty \sigma[\lambda,\infty) d\lambda = \int_0^\infty \left( \int_0^\infty \mathbbm{1}_{\{x \geq \lambda \geq t \}} d \lambda \right) d\sigma(x) = \int_0^\infty [x-t]_+ d\sigma(x),
    \]
    so that \eqref{eq: chong distribution} $\iff$ \eqref{eq: chong hockey stick}.  To prove \eqref{eq: chong hockey stick} $\Rightarrow$ \eqref{eq: chong signed}  the Taylor series expansion of a smooth convex function $\phi$ as
    \[
        \phi(x) = \phi(0) + x \phi'(0) + \int_0^\infty [x-t]^+ \phi''(t) dt,
    \]
    we have
    \[
        \int_0^\infty \phi(x) d\sigma(x) = \int_0^\infty \left( \phi(0) + x \phi'(0) \right) d\sigma(x) + \int_0^\infty \left(  \int_0^\infty [x-t]^+ d\sigma(x) \right) \phi''(t) dt \geq 0.
    \]
By approximation the result follows for general convex $\varphi$.  This completes the proof since \eqref{eq: chong signed} $\Rightarrow$ \eqref{eq: chong hockey stick} is immediate.
\end{proof}

}

\subsection{Nazarov and Podkorytov’s lemma as a consequence of majorization} 

We start by  proving Lemma \ref{lem-intro: single crossing implies majorization}.


\begin{proof}[Proof of Lemma \ref{lem-intro: single crossing implies majorization}] {The signed measure $\sigma = f \# \mu - g \# \mu$, satisfies
\[
    \int_0^\infty \psi d\sigma = \int_{\mathcal{X}} \psi(f) - \psi(g) d\mu
\]
for measurable $\psi$ for which either side of the equality is well defined. Hence, it follows that $\int_0^\infty 1 d\sigma = \int_0^\infty x d\sigma = 0$, and hence $\int_0^\infty L d\sigma  = 0$ for any $L(x) = ax + b$.  Moreover, $\sigma(t, \infty) = F(t) - G(t)$.}
Thus it follows from the assumptions on $F-G$ that $\Phi(t) \coloneqq \int_t^\infty \sigma(\lambda,\infty) d \lambda$ is non-decreasing for $t \leq \lambda_0$ and non-increasing for $t \geq \lambda_0$, and since $\lim_{t \to \infty} \Phi(t) = \Phi(0) = \int (g - f) d\mu  = 0$ we have $\Phi(t) \geq 0$. {Hence, $0 \prec  \sigma =g \# \mu - f \# \mu $ {by item \eqref{eq: chong distribution} of Theorem \ref{thm: chong}}}.
\end{proof}

As an immediate consequence of the majorization property, we re-prove 
Nazarov and Podkorytov’s lemma in the following form.

\begin{cor}[Nazarov-Podkorytov \cite{NP00}, Karlin-Novikoff \cite{karlin-novikoff}] \label{cor:NP}
Let $g$ and $f$ be two non-negative { measruable} functions on  $(\mathcal{X},\mathcal{A},\mu)$. Let $G$ and $F$ be their distribution functions. 
Assume that {$\int f^{s_o} - g^{s_o} d\mu =0$} for some $s_o >0$. 
Assume also that at some point $\lambda_o$,
$F(\lambda) \leq G(\lambda)$ for all $\lambda \in (0,\lambda_o)$ and $F(\lambda) \geq G(\lambda)$ for all $\lambda > \lambda_o$.    
Then {
    \begin{align*}
        \int (f^s - g^s) d \mu \geq  0
    \end{align*}
    }
    for all $s \geq s_0$.
\end{cor}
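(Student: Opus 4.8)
The plan is to deduce the corollary from Lemma~\ref{lem-intro: single crossing implies majorization}, applied not to $f$ and $g$ directly but to their $s_o$-th powers $f^{s_o}$ and $g^{s_o}$, and then to specialize the resulting majorization to one convenient convex test function. So the work splits into two very short steps: transporting the hypotheses through the power map, and then picking $\phi$.

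First I would record how the hypotheses transform under $x \mapsto x^{s_o}$. Writing $\tilde f \coloneqq f^{s_o}$ and $\tilde g \coloneqq g^{s_o}$, for $\lambda \geq 0$ one has $\tilde F(\lambda) = \mu\{ f^{s_o} > \lambda \} = \mu\{ f > \lambda^{1/s_o}\} = F(\lambda^{1/s_o})$, and likewise $\tilde G(\lambda) = G(\lambda^{1/s_o})$. Since $\lambda \mapsto \lambda^{1/s_o}$ is a strictly increasing bijection of $[0,\infty)$, the single-crossing assumption on $F-G$ at $\lambda_o$ carries over exactly to a single crossing of $\tilde F - \tilde G$ at the point $\lambda_o^{\, s_o}$, i.e.\ $\tilde F \leq \tilde G$ on $(0,\lambda_o^{\, s_o})$ and $\tilde F \geq \tilde G$ on $(\lambda_o^{\, s_o},\infty)$. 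Moreover $\tilde f,\tilde g$ are non-negative measurable functions and the hypothesis $\int (f^{s_o} - g^{s_o})\, d\mu = 0$ is precisely the balance condition $\int (\tilde f - \tilde g)\, d\mu = 0$ required by Lemma~\ref{lem-intro: single crossing implies majorization}. Hence that lemma applies to $\tilde f,\tilde g$ and yields $g^{s_o} \prec_\mu f^{s_o}$, that is, $\int \big( \phi(f^{s_o}) - \phi(g^{s_o}) \big)\, d\mu \geq 0$ for every convex $\phi \colon [0,\infty) \to \mathbb{R}$.

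It then remains only to choose $\phi$. For $s \geq s_o$ the function $\phi(x) = x^{s/s_o}$ is convex on $[0,\infty)$ (since $s/s_o \geq 1$) and satisfies $\phi(f^{s_o}) = f^{s}$, $\phi(g^{s_o}) = g^{s}$ pointwise, so the previous display becomes $\int (f^{s} - g^{s})\, d\mu \geq 0$, which is the claim; I would add a sentence noting that, as throughout the paper, this integral is understood in the signed-measure sense provided by the majorization, so it stays meaningful even when $f^{s}$ or $g^{s}$ fails to be integrable, and that restricting $s$ to the set $S$ of Lemma~\ref{lem:NP} recovers the genuine $\mathbb{L}^1$ statement. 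There is essentially no real obstacle here once Lemma~\ref{lem-intro: single crossing implies majorization} is in hand: the only points needing care are the elementary bookkeeping that carries the single-crossing property through the monotone substitution $\lambda \mapsto \lambda^{1/s_o}$, and the observation that we are invoking the conclusion of Lemma~\ref{lem-intro: single crossing implies majorization} for the unbounded convex function $x \mapsto x^{s/s_o}$ rather than only for bounded or integrable test functions.
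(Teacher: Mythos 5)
Your proof is correct and matches the paper's own argument essentially line by line: both pass to $f^{s_o}$ and $g^{s_o}$, track the single-crossing property through the monotone substitution, invoke Lemma~\ref{lem-intro: single crossing implies majorization} to conclude $g^{s_o} \prec_\mu f^{s_o}$, and then specialize the majorization to the convex function $\phi(x) = x^{s/s_o}$. (Incidentally, your crossing point $\lambda_o^{s_o}$ is the correct one; the paper's statement of it as $\lambda_o^{1/s_o}$ appears to be a typo.)
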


\begin{proof}
Writing, $G_{s_0}(t) = \mu \{ g^{s_0} > t \}$, and $F_{s_0}(t) = \nu \{ f^{s_0} > t\}$ then $F_{s_0}(t) = F(t^{\frac 1 {s_0}})$ and $G_{s_0}(t) = G(t^{\frac 1 {s_0}})$, so that $F_{s_0}$ and $G_{s_0}$ have a single crossing at $\lambda_0^{\frac 1 {s_0}}$ and satisfy the hypothesis of 
Lemma \ref{lem-intro: single crossing implies majorization}, and we have {$g^{s_0} \prec_\mu f^{s_0}$}.  Consequently, by Theorem \ref{thm: chong} for $\phi$ convex  we have
{
\begin{align} \label{eq: Joe applied to NP}
    \int (\phi(f^{s_0}) - \phi(g^{s_0})) d\mu \geq 
0 .
\end{align}
}
For $s \geq s_0$ taking $\phi(x) = x^{\frac s {s_0}}$ gives the result.
\end{proof}

\subsection{Majorization via transport}
This section is dedicated to the proof of Lemma \ref{lem-intro: King Transport-Majorization}.

{
Before moving to the proof, let us make some comments on the existence and regularity of the map $T$.

The change of variable assumption guarantees the existence of a map  $T$ that transports $gu$ to $fv$: $T \# gu = fv$. 
Set $\tilde \mu=g\mu/\int g d\mu$ and $\tilde \nu = f \nu/\int f d\nu$ 
to turn $g\mu$ and $f\nu$ into  probability measures on $\mathbb{R}^n$.
We stress that $T$ can be defined arbitrarily on any set of Lebesgue measure $0$. Indeed, if $T=T_o$ for almost all $x \in \mathbb{R}^n$,
$\int \int h(T) d\tilde \mu = \int_{\{T=T_o\}} \int h(T) d\tilde \mu
= \int_{\{T=T_o\}} \int h(T_o) d\tilde \mu = \int \int h(T_o) d\tilde \mu$.

Brenier's theorem \cite{brenier} asserts that, as soon as $\mathcal{T}_2(\tilde \mu,\tilde \nu) < \infty$ (that is a very mild assumption), where 
$\mathcal{T}_2$ is the optimal transport associated to the quadratic cost (see \cite{villani2009optimal}), the  transport map $T$ exists and is the gradient of a convex function $\Phi \colon \mathbb{R}^n \to \mathbb{R}\cup \{\infty\}$ with $\tilde \mu (\{ \Phi < \infty\})=1$. Furthermore, $T'$ exists and satisfies the Monge-Amp\`ere equation in the Aleksandrov sense.  We refer the reader to \cite{villani03} for more details.
}

The invertibility of the map $T$ is not guaranteed in general in the transport theory, even for the Brenier map. However, since $f\nu$ and $g\mu$ are absolutely continuous with respect to the Lebesgue measure,  $T = \nabla \Phi$, with $\Phi$ convex, has an inverse  and in fact $T^{-1}= \nabla \Phi^*$ where $\Phi^*$ is the Legendre transform of $\Phi$ and {$T^{-1}\# (f\nu)=g\mu$}. {In particular, in our setting,  the Brenier map $T=\nabla \Phi$ is always a change of variables.}

In dimension $1$ the situation is simpler since $T = F^{-1}\circ G$, with $F$ and $G$ the distribution functions of $f\nu$ and $g\mu$, is increasing, $T^{-1}=G^{-1}\circ F$ is always well-defined.


{
\begin{proof}[Proof of Lemma \ref{lem-intro: King Transport-Majorization}]
Let $t \geq 0$. Since $\phi \colon x \mapsto [x-t]^+$ is a convex function vanishing at $0$, $\phi(sx) \leq s \phi(x)$ for any $s \in (0,1)$ and $x \geq 0$. Therefore 
\begin{align*}
\int [g(x)-t]^+d\mu 
& =
\int_{\{u \neq 0\}} \left[\frac{u(T(x))f(T(x))|T'(x)|}{u(x)}-t \right]^+u(x)dx \\
& \leq 
\int [f(T(x))-t]^+ u(T(x))|T'(x)| dx \\
& =
\int [f(x)-t]^+d\mu.
\end{align*}
Since $f$ and $g$ are integrable, the above inequality is equivalent to
$$
\int_0^\infty [x-t]^+ d\sigma \geq 0
$$
for $\sigma = f \# \mu - g \# \mu$. The expected result follows from Theorem \ref{thm: chong} Item \eqref{eq: chong hockey stick}.
\end{proof}
}


\subsection{Strongly log-concave distributions and majorization}

We prove here that any strongly log-concave density majorizes the Gaussian density and that the Gaussian density maximizes the R\'enyi and Tsallis entropies among all strongly log-concave densities. To that aim, we need first to recall the definition of strongly log-concave densities and Cafarelli's contraction Theorem.

We denote by $\gamma_n$ the standard Gaussian measure on $\mathbb{R}^n$, with density ${g}(x) \coloneqq \frac{d\gamma_n}{dx} =  \frac{1} {(2 \pi)^{\frac n 2}} e^{-|x|^2/2} $.

\begin{defn}
A probability density function ${f} : \mathbb{R}^n \to [0,\infty)$ is strongly log-concave when $\nu(dx) = {f}(x) dx$  is log-concave with respect to $\gamma_n$.
In other words, there exists a convex function $V$ such that $\nu(dx) = e^{-V(x)} \gamma_n(dx)$.
\end{defn}

\begin{thm}[Caffarelli \cite{caffarelli2000monotonicity, caffarelli2002monotonicity}] \label{thm: Caffarelli} Let $\nu${,  with density $f$ with respect to the Lebesgue measure,} be strongly log-concave{. T}hen there exists a  $1$-Lipschitz {change of variables} $T$ {from $g$ to $f$} such that $T = \nabla \varphi$, for $\varphi$ convex.
\end{thm}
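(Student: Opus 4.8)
Since, as recalled above, the map $T$ must be the Brenier map $T=\nabla\varphi$ pushing $\gamma_n$ forward to $\nu$, whose existence and change-of-variables property are already in hand, the whole content of the statement is the bound $D^2\varphi\preceq I$, i.e. that $\nabla\varphi$ is $1$-Lipschitz. The plan is to obtain this first as an \emph{a priori} estimate for smooth data, via the maximum principle applied to the Monge--Amp\`ere equation, and then to remove the smoothness by approximation.

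\emph{Reduction and regularity.} Writing $\nu(dx)=e^{-V(x)}\gamma_n(dx)$ with $V$ convex, we have $f=e^{-W}$ with $W(x)=V(x)+\tfrac12|x|^2+\tfrac n2\log(2\pi)$, so $D^2W\succeq I$. Mollifying $V$ (which preserves convexity) and renormalizing, we may assume $W$, hence $f$, is smooth and strictly positive on all of $\mathbb{R}^n$; the target measure then has a density bounded above and below by positive constants on every compact set and is supported on the convex set $\mathbb{R}^n$, so Caffarelli's regularity theory for Monge--Amp\`ere gives $\varphi\in C^{1,\alpha}_{\mathrm{loc}}$ and then, by bootstrapping, $\varphi\in C^\infty$. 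The equation for $T=\nabla\varphi$ (as a change of variables from $g$ to $f$) is $g(x)=f(\nabla\varphi(x))\det D^2\varphi(x)$, equivalently
\[
  \log\det D^2\varphi(x)=W(\nabla\varphi(x))-\tfrac12|x|^2-\tfrac n2\log(2\pi).
\]

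\emph{The maximum principle computation.} Differentiate this identity twice in a fixed unit direction $e$. Writing $A=D^2\varphi$ (which is $\succ0$ since $\det A=g/f(\nabla\varphi)>0$), $u=\partial_e\varphi$ and $v=\nabla u=Ae$, and using $\partial_e(A^{-1})=-A^{-1}(D^2u)A^{-1}$, one obtains
\[
  \tr\big(A^{-1}D^2\varphi_{ee}\big)=\tr\big((A^{-1/2}(D^2u)A^{-1/2})^2\big)+D^2W(\nabla\varphi)[v,v]+\nabla W(\nabla\varphi)\cdot\nabla\varphi_{ee}-1 .
\]
Let $(x_0,e_0)$, with $|e_0|=1$, maximize $(x,e)\mapsto\varphi_{ee}(x)$. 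At $x_0$ the scalar function $x\mapsto\varphi_{e_0e_0}(x)$ has an interior maximum, so $\nabla\varphi_{e_0e_0}(x_0)=0$ and $D^2\varphi_{e_0e_0}(x_0)\preceq0$. Evaluating the identity at $(x_0,e_0)$: the term $\nabla W\cdot\nabla\varphi_{e_0e_0}$ vanishes; $\tr(A^{-1}D^2\varphi_{e_0e_0})\le0$ because $A^{-1}\succ0$ and $D^2\varphi_{e_0e_0}(x_0)\preceq0$; the term $\tr\big((A^{-1/2}(D^2u)A^{-1/2})^2\big)\ge0$; and $D^2W(\nabla\varphi)[v,v]\ge|v|^2\ge(e_0\cdot v)^2=\varphi_{e_0e_0}(x_0)^2$, using $D^2W\succeq I$, Cauchy--Schwarz with $|e_0|=1$, and $e_0\cdot v=e_0^{\top}Ae_0=\varphi_{e_0e_0}(x_0)$. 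Hence $0\ge\varphi_{e_0e_0}(x_0)^2-1$, so $\varphi_{e_0e_0}(x_0)\le1$; being the maximum over all $x$ and all unit $e$, this gives $D^2\varphi\preceq I$ everywhere. Finally, passing this uniform bound through the mollification (by stability of optimal transport maps) yields the $1$-Lipschitz Brenier map for the original $\nu$.

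\emph{Main obstacle.} The delicate point is the assumption in the second step that $\varphi_{ee}$ attains its supremum: on $\mathbb{R}^n$ it may escape to infinity, and this is the genuinely technical heart of Caffarelli's theorem. It is handled either by first establishing the estimate on bounded convex domains and exhausting $\mathbb{R}^n$, or by replacing $\varphi_{ee}(x)$ with $\varphi_{ee}(x)-\delta|x|^2$ to force an interior maximizer, controlling the extra terms via the (at most) quadratic growth of $\varphi$ coming from the sub-Gaussian tails of $\nu$, and letting $\delta\to0$. An alternative that sidesteps Monge--Amp\`ere regularity entirely is the heat-semigroup/interpolation proof of Kim and Milman, which one could follow instead.
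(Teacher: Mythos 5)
The paper does not prove this theorem; it is stated as a cited result (Caffarelli's contraction theorem) with references to Caffarelli's original articles, and the present paper's contribution lies downstream of it, in Corollary~\ref{cor: strongly log-concave majorizes Gaussian} and its consequences. So there is no ``paper proof'' against which to match yours, and a full proof is in any case out of scope for the article.

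That said, your sketch is a faithful and essentially correct outline of Caffarelli's original maximum-principle argument. The reduction $f = e^{-W}$ with $D^2W \succeq I$, the linearized Monge--Amp\`ere identity obtained by differentiating $\log\det D^2\varphi = W(\nabla\varphi) - |x|^2/2 - (n/2)\log(2\pi)$ twice in a unit direction $e$, the signs of the terms at an interior maximizer $(x_0,e_0)$ of $\varphi_{ee}$, and the chain $D^2W[v,v] \ge |v|^2 \ge (e_0\cdot v)^2 = \varphi_{e_0e_0}(x_0)^2$ all check out, and they do yield $\varphi_{e_0e_0}(x_0)\le 1$, hence $\|D^2\varphi\|_{\mathrm{op}}\le 1$. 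You also correctly flag the genuine difficulty: nothing guarantees that $\sup_{x,|e|=1}\varphi_{ee}(x)$ is attained, and handling this (by exhaustion on bounded convex domains, by a penalization $\varphi_{ee}-\delta|x|^2$ with quadratic-growth control, or by switching to the Kim--Milman semigroup interpolation) is where the technical work lives. Two small remarks worth keeping in mind if you were to flesh this out: the mollification step must be paired with a stability statement for Brenier maps strong enough to pass the pointwise Hessian bound to the limit (local uniform $C^1$ convergence of the potentials suffices), and the regularity input you invoke (Caffarelli's interior $C^{1,\alpha}$ theory and bootstrap) itself requires the densities to be locally bounded above and below, which your smoothing arrangement does ensure but is worth stating explicitly. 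None of this affects the overall correctness of the sketch.
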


The existence of a connection between majorization and Caffarellli's contraction theorem seems to go back to Harg\'e \cite{harge} (see  \cite{gozlan-juillet,fathi2020aproof} for more recent results in this direction). Here we may put together our transport approach of the majorization in Lemma \ref{lem-intro: King Transport-Majorization} and the latter theorem to get the following natural statement.

\begin{cor} \label{cor: strongly log-concave majorizes Gaussian}
    If {$f$} is a strongly log-concave density function,  then 
    {$g \prec_{dx} f$} where {$g$} is the standard Gaussian density defined above.
\end{cor}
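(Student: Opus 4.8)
The plan is to combine Caffarelli's contraction theorem (Theorem~\ref{thm: Caffarelli}) directly with the transport-majorization lemma (Lemma~\ref{lem-intro: King Transport-Majorization}), taking the underlying measure $\mu$ to be the Lebesgue measure, i.e. $u \equiv 1$.

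First I would unwind the definitions. We want $g \prec_{dx} f$, which by definition means $\int_{\mathbb{R}^n} \varphi(g)\,dx \le \int_{\mathbb{R}^n} \varphi(f)\,dx$ for every convex $\varphi \colon [0,\infty) \to \mathbb{R}$; in the notation of Lemma~\ref{lem-intro: King Transport-Majorization} with $u \equiv 1$, this is exactly the conclusion $g \prec_\mu f$ for $\mu = dx$. So it suffices to verify the hypotheses of that lemma for this choice: that $f, g$ are non-negative measurable with $\int g\,dx < \infty$, and that there is a change of variables $T \colon \mathbb{R}^n \to \mathbb{R}^n$ from $g$ (i.e. $ug$ with $u\equiv 1$) to $f$ satisfying $\infess \frac{1}{|T'|} \ge 1$, equivalently $|T'(x)| \le 1$ for a.e.\ $x$.

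Next, the integrability $\int g\,dx = 1 < \infty$ is immediate since $g$ is the standard Gaussian density, and both $f$ and $g$ are probability densities hence non-negative and measurable. For the change of variables, Caffarelli's theorem provides a $1$-Lipschitz map $T = \nabla\varphi$ with $\varphi$ convex which is a change of variables from $g$ to $f$; that is, $T$ is locally Lipschitz, injective, and satisfies $g = f(T)\,|T'|$ Lebesgue-a.e. It remains to note that $T$ being $1$-Lipschitz and of the form $\nabla\varphi$ with $\varphi$ convex forces the (symmetric, positive semidefinite) derivative $T'(x) = \Hess\varphi(x)$ to have all eigenvalues in $[0,1]$ wherever it exists (a.e.\ by Alexandrov's theorem), hence $|T'(x)| = \det T'(x) \le 1$ a.e. With all hypotheses in hand, Lemma~\ref{lem-intro: King Transport-Majorization} yields $g \prec_{dx} f$.

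The only genuinely substantive point — and thus the main obstacle, though it is largely bookkeeping — is checking that Caffarelli's map really does meet the technical definition of "change of variables'' used here (locally Lipschitz, injective, and the a.e.\ Monge–Ampère identity $g = f(T)|T'|$), and that $1$-Lipschitzness of $\nabla\varphi$ translates to the pointwise Jacobian bound $|T'|\le 1$. The paper has essentially pre-packaged this: the discussion following Lemma~\ref{lem-intro: King Transport-Majorization} records that the Brenier map is always a change of variables in the absolutely continuous setting, and Theorem~\ref{thm: Caffarelli} is stated precisely as producing a "$1$-Lipschitz change of variables.'' So the proof is short: cite Theorem~\ref{thm: Caffarelli} to get $T$, observe $\infess \frac{u}{u(T)|T'|} = \infess \frac{1}{|T'|} \ge 1$ from the Lipschitz bound, and invoke Lemma~\ref{lem-intro: King Transport-Majorization}.
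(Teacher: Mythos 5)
Your proposal is correct and follows exactly the paper's own proof: invoke Caffarelli's contraction theorem to get a $1$-Lipschitz change of variables $T=\nabla\varphi$, deduce $|T'|\le 1$ a.e.\ from the eigenvalue bound on $\Hess\varphi$, and apply Lemma~\ref{lem-intro: King Transport-Majorization} with $u\equiv 1$.
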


\begin{proof}
    By Theorem \ref{thm: Caffarelli}, there exists a $1$-Lipschitz {change of variables} $T: \mathbb{R}^n \to \mathbb{R}^n$ such that $T = \nabla \varphi$, for a convex $\phi \colon \mathbb{R}^n \to \mathbb{R}$ such that $T \# \gamma_n = \nu$ (with $\nu(dx)={f}(x)dx$).  As the Hessian of a convex function $T'(x)$ is symmetric and positive definite, thus it has non-negative eigenvalues.  Since $T$ is Lipschitz, its eigenvalues are all bounded by $1$, and hence $|T'(x)| \leq 1$ for all $x$.  
    {Applying Lemma \ref{lem-intro: King Transport-Majorization} 
    leads to the desired conclusion.}
\end{proof}

{
\begin{remark}
For comparison let us mention that Harg\'e proved in \cite{harge} that $0 \prec \gamma_n - \nu$ (under the assumption that $\int x d\nu =0$), which amounts to saying that, for all convex  
function $\phi$ it holds
$\int \phi d\gamma_n \geq \int \phi d\nu$,
while our conclusion $g \prec_{dx} f$ reads $\int \phi(g) dx \leq \int \phi(f) dx$ for all $\phi$ convex.
\end{remark}
}

As a corollary, we will prove that the Gaussian distribution maximizes the R\'enyi and Tsallis entropies of all orders (among the set of all  strongly log-concave densities). Let us recall some definition.

\begin{defn}[Tsallis \cite{tsallis1988possible}/R\'enyi \cite{renyi1961measures} Entropy]
For a probability density function ${f}:\mathbb{R}^n \to \mathbb{R}_+$, and $q \in (0,1) \cup (1,\infty)$ we denote the $q$-Tsallis entropy by
\begin{align*}
    S_q({f}) \coloneqq \frac{ \int {f}^q(x) dx -1}{1-q}.
\end{align*}
For $q = 1$, $S_1({f}) \coloneqq h({f}) = - \int {f}(x) \log {f}(x) dx$ is the Shannon entropy.  Via continuous extension, one can define $S_0({f}) = | \{ {f} > 0 \} | - 1$ and $S_\infty({f}) = 0$ when $\| {f} \|_\infty \leq 1$ and $S_\infty({f}) = - \infty $ otherwise.

We denote the $q$-R\'enyi entropy, 
\begin{align*}
    h_q({f}) \coloneqq \frac{ \log \int {f}^q(x) dx}{1-q},
\end{align*}
and  define $h_1({f}) \coloneqq h({f})$, $h_0({f}) = |\{ {f} > 0\}|$, $h_\infty({f}) = \|g\|_\infty$.
\end{defn}

Observe that 
$S_q({f}) 
= \Psi_q( h_q({f}))$ where 
\begin{align} \label{eq: Psi function}
\Psi_q(x)  \coloneqq \frac{ \exp{[(1-q) x}] {-} 1}{1-q}, \qquad x \in \mathbb{R}
\end{align}
is a strictly increasing function.

We are in position to state our corollary.
\begin{cor}
    Let $ f \colon \mathbb{R}^n \to \mathbb{R}$ be a strongly log-concave probability density, then for $q \in [0,\infty]$
    \begin{align*}
        h_q({f}) \leq h_q({g})
    \end{align*}
    and
    \begin{align*} 
        S_q({f}) \leq {S}_q({g})
    \end{align*}
    where ${g}$ is the Gaussian density defined above. 
\end{cor}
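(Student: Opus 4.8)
The plan is to deduce this corollary directly from Corollary~\ref{cor: strongly log-concave majorizes Gaussian}, which asserts $g \prec_{dx} f$, i.e. $\int \phi(g)\,dx \le \int \phi(f)\,dx$ for every convex $\phi:[0,\infty)\to\mathbb{R}$. The key observation is that both R\'enyi and Tsallis entropies are, up to the monotone rescaling $\Psi_q$ recorded in~\eqref{eq: Psi function}, determined by the single functional $\int f^q\,dx$ (together with $|\{f>0\}|$ and $\|f\|_\infty$ in the boundary cases), so it suffices to compare these functionals. Since $\Psi_q$ is strictly increasing, the inequality $S_q(f)\le S_q(g)$ follows from $h_q(f)\le h_q(g)$, so I will focus on the R\'enyi entropies, and in fact on proving $\int f^q\,dx$ is larger or smaller than $\int g^q\,dx$ in the direction dictated by the sign of $1-q$.

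First I would handle the generic range $q\in(0,1)\cup(1,\infty)$. For $q>1$ the function $\phi(t)=t^q$ is convex on $[0,\infty)$, so majorization gives $\int g^q\,dx \le \int f^q\,dx$; dividing by $1-q<0$ reverses the inequality to $\frac{\log\int f^q}{1-q}\le \frac{\log\int g^q}{1-q}$, that is $h_q(f)\le h_q(g)$ (monotonicity of $\log$ and of $t\mapsto t/(1-q)$). For $q\in(0,1)$ the function $\phi(t)=t^q$ is concave, hence $-t^q$ is convex, and majorization yields $\int g^q\,dx \ge \int f^q\,dx$; now $1-q>0$, and again $h_q(f)\le h_q(g)$. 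For the Tsallis version apply $\Psi_q$, which is strictly increasing, to the already-established inequality $h_q(f)\le h_q(g)$; since $S_q=\Psi_q\circ h_q$ this gives $S_q(f)\le S_q(g)$. Note also that $\int f^q\,dx<\infty$ for $q>1$ is automatic from strong log-concavity (indeed $f\le e^{-V(0)}\cdots$ bounded and $f$ integrable, with Gaussian tails), so the functionals are finite and the manipulations are legitimate.

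Next I would treat the boundary cases $q\in\{0,1,\infty\}$ by the same majorization but with the appropriate convex test functions. For $q=1$, the Shannon entropy: take $\phi(t)=t\log t$ (convex on $[0,\infty)$, with $\phi(0)=0$) to get $\int g\log g\,dx \le \int f\log f\,dx$, i.e. $h(f)\le h(g)$; this also covers $S_1$. For $q=\infty$: one wants $\|f\|_\infty\ge \|g\|_\infty$. This follows from majorization applied to $\phi_a(t)=[t-a]^+$ for $a$ just below $\|g\|_\infty$: if $\|f\|_\infty<\|g\|_\infty$ we could choose $a$ with $\|f\|_\infty<a<\|g\|_\infty$, whence $\int [f-a]^+dx=0$ while $\int[g-a]^+dx>0$ (because $g$ is continuous and strictly positive near its max, so $\{g>a\}$ has positive Lebesgue measure), contradicting $\int\phi_a(g)\le\int\phi_a(f)$. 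Hence $\|g\|_\infty\le\|f\|_\infty$, giving $h_\infty(f)=\|f\|_\infty\ge\|g\|_\infty=h_\infty(g)$; wait—this is the wrong direction, so I must be careful here. Re-examining: with $h_\infty(f)=\|f\|_\infty$ as defined, the claim $h_\infty(f)\le h_\infty(g)$ would require $\|f\|_\infty\le\|g\|_\infty$, which is false in general (a very peaked strongly log-concave density has large sup norm). The resolution is that in the stated corollary the $q=\infty$ entries should be read through $S_\infty$: $S_\infty(f)=0$ if $\|f\|_\infty\le1$ and $-\infty$ otherwise, and since $\|g\|_\infty=(2\pi)^{-n/2}\le1$ for $n\ge1$ we get $S_\infty(g)=0\ge S_\infty(f)$ always. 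For $q=0$, $h_0(f)=|\{f>0\}|$: a strongly log-concave density has $V$ convex and finite-valued (as $\nu=e^{-V}\gamma_n$ is a genuine density), so $f>0$ everywhere and $|\{f>0\}|=\infty=|\{g>0\}|$, with equality; correspondingly $S_0(f)=\infty-1$, matching $S_0(g)$. I expect the main obstacle to be exactly this bookkeeping at the endpoints—pinning down which of $h_q$ and $S_q$ the displayed inequalities are really meant for, and verifying the degenerate identities ($q=0$: both supports are all of $\mathbb{R}^n$; $q=\infty$: the sup-norm comparison and the role of the cutoff at $1$)—rather than any analytic difficulty, since the core $q\in(0,1)\cup(1,\infty)$ range is an immediate application of convexity plus Corollary~\ref{cor: strongly log-concave majorizes Gaussian}.
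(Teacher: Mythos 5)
Your treatment of the core range $q\in(0,1)\cup(1,\infty)$ (including $q=1$ via $\phi(t)=t\log t$) is exactly the paper's proof: apply Corollary~\ref{cor: strongly log-concave majorizes Gaussian} with $\phi(t)=\pm t^q$ and then undo the sign of $1-q$, and pass to $S_q$ by $\Psi_q$. So that part matches.

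Your detour at $q=\infty$ is caused by a typo in the paper's definition, not by a gap in the mathematics. The paper writes $h_\infty(f)=\|g\|_\infty$, which is clearly a misprint for the min-entropy $h_\infty(f)=-\log\|f\|_\infty$ (the only thing consistent with $h_q(f)=\frac{\log\int f^q}{1-q}$ as $q\to\infty$). With the correct sign, your test-function argument with $\phi_a(t)=[t-a]^+$ gives $\|f\|_\infty\ge\|g\|_\infty$, which is precisely $h_\infty(f)\le h_\infty(g)$, and the detour through $S_\infty$ is unnecessary. The paper instead handles $q=\infty$ by taking the limit $q\to\infty$ in the already-proved inequality, invoking strong log-concavity only to guarantee $\lim_{q\to\infty}h_q(f)=h_\infty(f)$; your hockey-stick argument is a perfectly good (arguably more self-contained) alternative. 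At $q=0$ there is one small inaccuracy in your reasoning: a strongly log-concave density need not be everywhere positive, since $V$ may take the value $+\infty$ outside a convex set, so $|\{f>0\}|$ can be finite. The paper avoids the issue entirely by observing $h_0(g)=\infty$, so $h_0(f)\le h_0(g)$ is automatic without saying anything about the support of $f$; you should do the same rather than assert $f>0$ everywhere.
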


\begin{proof}
In light of the one-to-one relationship between $h_q$ and $S_q$ given in \eqref{eq: Psi function}, it suffices to prove the result for the R\'enyi entropy.
When $q=0$ there is nothing to prove since $h_q({g}) = \infty$.  For $q \in (0,1)$, the function $\varphi(x) = - x^q$ is convex and hence by Corollary \ref{cor: strongly log-concave majorizes Gaussian}, {$g \prec_{dx} f$}, and applying Theorem \ref{thm: chong},
\begin{align*}
    \int \varphi({g(x)})dx \leq \int \varphi({f(x)})dx,
\end{align*}
which gives $\int f^{q} {\leq} \int g^{q}$, or $h_q(f) {\leq} h_q(g)$.  When $q = 1$, the convex function $\varphi(x) = x \log x$, gives the result for the Shannon entropy.  Again, when $q > 1$, take $\varphi(x) = x^q$.  For $q = \infty$, observe that ${f}$ strongly log-concave is more than sufficient to give $\lim_{q \to \infty} h_q({f}) = h_\infty({f})$ from which  the result follows, ending the proof of the Corollary. 
\end{proof}

As a final remark note that if $X$ is a random variable and $Y = T(X)$ for $|T'(x)| \geq 1$, then if $Y$ has density $f$, $X$ has density $g(x) = f(T(x))|T'(x)|$ and
$h_q(T(X)) \geq h_q(X)$, and similarly for $S_q$.

\subsection{{Extensions to different measure spaces}} \label{sec:majorization-different-measure}

{
This section collects some generalization of the previous results. In particular, we will extend Lemma \ref{lem-intro: King Transport-Majorization} to functions $f$ and $g$ leaving on different spaces and relax the condition $\infess \frac{u}{v(T)|T'|} \geq 1$.

Here $(\mathcal{X}, \mathcal{A}, \mu)$ and $(\mathcal{Y}, \mathcal{B}, \nu)$ denote measure spaces and $g$ and $f$ non-negative measurable functions $g:\mathcal{X} \to \mathbb{R}_+$ and $f: \mathcal{Y} \to \mathbb{R}_+$.  Their respective distribution functions $G$ and $F$ denote $G(\lambda) = \mu \{ g > \lambda \}$ and $F(\lambda) = \nu \{ f > \lambda\}$. 

The measure spaces will not be assumed to have the same measure ($\mu(\mathcal{X})$ and $\nu(\mathcal{Y})$ need not be equal).
This will not guarantee anymore that $\int_0^\infty L(x) d \sigma = 0$ for any affine function. In particular,  the assumption $\int_0^\infty d\sigma = 0$ will not be satisfied leading us to consider only convex functions vanishing at $0$ and therefore a modified definition of majorization. Similarly to
Theorem \ref{thm: chong}, the following holds (the proof being similar, we left it to the reader).

\begin{thm} \label{th: Chong vanish}
For a signed measure $\sigma$ on $[0,\infty)$ such that that $\int_0^\infty x d \sigma = 0$, the following are equivalent;
\begin{itemize}
    \item[(i)]  \label{eq: chong signed without matching mass} For $\phi$ convex with $\phi(0) = 0$,
    $$
        \int_0^\infty \phi(x) d\sigma(x) \geq 0.
    $$
    \item[(ii)] \label{eq: chong distribution without matching mass} For $t \geq 0$, $$\int_t^\infty \sigma(\lambda,\infty) d \lambda \geq 0.$$
    \item[(iii)] \label{eq: chong hockey stick without matching mass}
     For $t \geq 0$, $$\int_0^\infty [ x - t]^+ d\sigma(x) \geq 0 .$$
\end{itemize}
\end{thm}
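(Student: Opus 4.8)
The plan is to follow the proof of Theorem \ref{thm: chong} almost verbatim, the only structural change being that the hypothesis ``$\int_0^\infty L\,d\sigma = 0$ for every affine $L$'' is weakened to the single moment condition $\int_0^\infty x\,d\sigma = 0$. Since we can no longer absorb a constant term, this is exactly what forces the restriction to convex $\phi$ with $\phi(0)=0$ in (i). I would prove the implications in the order (ii) $\iff$ (iii), then (iii) $\Rightarrow$ (i), then (i) $\Rightarrow$ (iii), tacitly assuming throughout (as in the footnote to Theorem \ref{thm: chong}) that $\int_0^\infty x\,d|\sigma| < \infty$ so that the integrals written below are well defined.

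First, (ii) $\iff$ (iii) is pure Fubini and does not use the moment condition at all: for $t \geq 0$,
\[
    \int_t^\infty \sigma(\lambda,\infty)\,d\lambda = \int_0^\infty \left( \int_0^\infty \mathbbm{1}_{\{ x \geq \lambda \geq t \}}\,d\lambda \right) d\sigma(x) = \int_0^\infty [x-t]^+\,d\sigma(x),
\]
the interchange being justified by the finiteness assumption on $\sigma$, exactly as in Theorem \ref{thm: chong}.

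For (iii) $\Rightarrow$ (i), I would first treat $\phi \in C^2([0,\infty))$ convex with $\phi(0)=0$, via Taylor's formula with integral remainder based at the origin,
\[
    \phi(x) = \phi'(0)\,x + \int_0^\infty [x-t]^+\,\phi''(t)\,dt, \qquad x \geq 0,
\]
where the constant term has dropped out precisely because $\phi(0)=0$. Integrating against $\sigma$ and swapping the order of integration gives
\[
    \int_0^\infty \phi(x)\,d\sigma(x) = \phi'(0)\int_0^\infty x\,d\sigma(x) + \int_0^\infty \left( \int_0^\infty [x-t]^+\,d\sigma(x) \right)\phi''(t)\,dt,
\]
in which the first term vanishes by the moment hypothesis and the second is non-negative since $\phi'' \geq 0$ and the inner integral is non-negative by (iii). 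A general convex $\phi$ with $\phi(0)=0$ for which $\int|\phi|\,d|\sigma|<\infty$ is then recovered by approximation: one writes $\phi$ as a pointwise limit of convex $C^2$ functions vanishing at $0$ and affine outside a compact set, all dominated by a fixed multiple of $|\phi| + 1 + x$ (which is $|\sigma|$-integrable), and passes to the limit. Finally, (i) $\Rightarrow$ (iii) is immediate: for each fixed $t \geq 0$ the function $x \mapsto [x-t]^+$ is convex on $[0,\infty)$ and equals $0$ at $x=0$, so applying (i) to it yields exactly (iii).

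The only genuinely delicate step is the approximation in (iii) $\Rightarrow$ (i): one must simultaneously preserve $\phi(0)=0$, keep the approximants of at most linear growth (so that both the Taylor representation and the Fubini interchange stay legitimate against a signed measure with only a finite first moment), and control $\phi$ near $0$, where its right derivative may be large or its value may jump relative to $\lim_{x\to0^+}\phi(x)$. This is the same point that is passed over with the words ``by approximation'' in the proof of Theorem \ref{thm: chong}, and it is handled in the same manner here; everything else is a transcription of that proof with the constant term suppressed.
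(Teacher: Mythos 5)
Your proof is correct and is exactly the adaptation the paper has in mind when it says ``the proof being similar, we left it to the reader'': the equivalence (ii) $\iff$ (iii) is the same Fubini step as in Theorem~\ref{thm: chong}, (i) $\Rightarrow$ (iii) uses the test function $[x-t]^+$ (which vanishes at $0$), and (iii) $\Rightarrow$ (i) is the Taylor identity with the constant term absent because $\phi(0)=0$ and the linear term killed by $\int x\,d\sigma = 0$. You also correctly flag the approximation for non-smooth $\phi$ as the only point requiring care, matching the level of detail in the paper's proof of Theorem~\ref{thm: chong}.
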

We will write $0 \prec_0 \sigma$ when any of the above are satisfied (and accordingly $g \# \nu \prec_0 f \# \mu$).

In \cite{NP00} Nazarov and Podkorytov’s lemma (Lemma \ref{lem:NP}) is stated in the case that $\mu = \nu$. However their proof can be easily adapted in the case that $\nu$ differs from $\mu$ and $f$ and $g$ are integrable. In such a setting, the following counterpart of Lemma \ref{lem-intro: single crossing implies majorization} and Corollary \ref{cor:NP} hold.
Their proof are left to the reader.

\begin{lem}[\cite{karlin-novikoff}] \label{lem: single crossing implies majorization vanish}
Let $g$ and $f$ be two non-negative { measruable} functions on  $(\mathcal{X},\mathcal{A},\mu)$ and $(\mathcal{Y}, \mathcal{B}, \nu)$ respectively. 
Let $G$ and $F$ be their distribution functions. 
Assume that $\int g d\mu = \int f d\mu < \infty$.
Assume also that at some point $\lambda_o$ the difference $F-G$ changes sign from $-$ to $+$, \textit{i.e.},
$F(\lambda) \leq G(\lambda)$ for all $\lambda \in (0,\lambda_o)$ and $F(\lambda) \geq G(\lambda)$ for all $\lambda > \lambda_o$. Then 
$$
g \# \mu  \prec_0  f \# \nu , 
$$
and consequently $\int_{\mathcal{X}} \phi(g) d\mu \leq \phi(f)) d \nu $ when $\phi$ is convex and $\phi(0)=0$.
\end{lem}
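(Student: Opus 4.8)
The plan is to run the argument of Lemma~\ref{lem-intro: single crossing implies majorization} essentially verbatim, with Theorem~\ref{thm: chong} replaced by its one-sided counterpart Theorem~\ref{th: Chong vanish}. First I would form the signed measure $\sigma = f\#\nu - g\#\mu$ on $[0,\infty)$, so that $\int_0^\infty \psi\, d\sigma = \int_{\mathcal Y} \psi(f)\, d\nu - \int_{\mathcal X}\psi(g)\, d\mu$ for every measurable $\psi$ making both sides meaningful, and in particular $\sigma(t,\infty) = F(t) - G(t)$ for all $t \ge 0$. By the layer-cake formula and the integrability of $f$ and $g$, one has $\int_0^\infty x\, d\sigma = \int_{\mathcal Y} f\, d\nu - \int_{\mathcal X} g\, d\mu = 0$, which is exactly the mass-matching hypothesis $\int_{\mathcal X} g\, d\mu = \int_{\mathcal Y} f\, d\nu < \infty$. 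Note that, in contrast with Theorem~\ref{thm: chong}, we cannot assert $\int_0^\infty 1\, d\sigma = 0$, since $\mu(\mathcal X)$ and $\nu(\mathcal Y)$ need not agree; this is precisely why the conclusion is phrased with $\prec_0$ and restricted to convex $\phi$ with $\phi(0)=0$.

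Next I would verify item (ii) of Theorem~\ref{th: Chong vanish}, namely that $\Phi(t) := \int_t^\infty \sigma(\lambda,\infty)\, d\lambda = \int_t^\infty \big(F(\lambda)-G(\lambda)\big)\, d\lambda \ge 0$ for every $t \ge 0$. The single crossing hypothesis gives $F-G \le 0$ on $(0,\lambda_o)$ and $F-G \ge 0$ on $(\lambda_o,\infty)$, so $\Phi$ is non-decreasing on $[0,\lambda_o]$ and non-increasing on $[\lambda_o,\infty)$. Since $\Phi(0) = \int_0^\infty (F-G)\, d\lambda = 0$ and $\lim_{t\to\infty}\Phi(t) = 0$ (integrability of $f$ and $g$ forces $F-G$ to be integrable, so the tail integral vanishes), $\Phi \ge 0$ on $[0,\lambda_o]$ because it increases from $0$, and $\Phi \ge 0$ on $[\lambda_o,\infty)$ because it decreases down to $0$. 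Hence $\Phi \ge 0$ throughout.

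Theorem~\ref{th: Chong vanish} then delivers $0 \prec_0 \sigma$, that is $\int_0^\infty \phi\, d\sigma \ge 0$ for every convex $\phi$ with $\phi(0)=0$; unwinding the definition of $\sigma$, this reads $\int_{\mathcal X}\phi(g)\, d\mu \le \int_{\mathcal Y}\phi(f)\, d\nu$, which is precisely the assertion $g\#\mu \prec_0 f\#\nu$ together with its stated consequence. If one also wants the equality case, it follows as in Corollary~\ref{cor:NP}: equality for a strictly convex $\phi$ forces $\Phi \equiv 0$, hence $F = G$.

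The only delicate point is integrability bookkeeping, and it is of the same nature as the one already dealt with in Section~\ref{sec:majorization}: one must know that $x$ is $|\sigma|$-integrable, which holds because $\int_0^\infty x\, d|\sigma| \le \int_{\mathcal Y} f\, d\nu + \int_{\mathcal X} g\, d\mu < \infty$, so that the Fubini interchange $\int_t^\infty \sigma(\lambda,\infty)\, d\lambda = \int_0^\infty [x-t]^+\, d\sigma(x)$ underlying Theorem~\ref{th: Chong vanish} is legitimate, and the layer-cake identities for $\int f\, d\nu$ and $\int g\, d\mu$ are valid. Since $\mu$ and $\nu$ are allowed to be infinite, if one is worried about the behaviour of $\sigma$ near $0$ one may run the whole argument on $(\e,\infty)$ and let $\e\downarrow 0$, with monotone convergence handling the limits. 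No idea beyond the proof of Lemma~\ref{lem-intro: single crossing implies majorization} is required.
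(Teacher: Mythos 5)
Your proof is correct and is precisely the argument the paper has in mind: run the proof of Lemma~\ref{lem-intro: single crossing implies majorization} on $\sigma = f\#\nu - g\#\mu$, verify via single crossing and mass-matching that $\Phi(t)=\int_t^\infty\bigl(F(\lambda)-G(\lambda)\bigr)d\lambda\ge 0$, and invoke Theorem~\ref{th: Chong vanish} in place of Theorem~\ref{thm: chong} (since $\int_0^\infty 1\,d\sigma$ is no longer controlled). The paper explicitly leaves this proof to the reader as a routine adaptation, and your write-up, including the integrability bookkeeping, is exactly the expected one.
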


\begin{cor}[Nazarov-Podkorytov \cite{NP00}, Karlin-Novikoff \cite{karlin-novikoff}] \label{cor:NPvanish}
Let $g$ and $f$ be two non-negative { measruable} functions on  $(\mathcal{X},\mathcal{A},\mu)$ and $(\mathcal{Y}, \mathcal{B}, \nu)$ respectively. Let $G$ and $F$ be their distribution functions. 
Assume that $\int f^{s_o} d\nu = \int g^{s_o} d\mu< \infty$ for some $s_o >0$. 
Assume also that at some point $\lambda_o$,
$F(\lambda) \leq G(\lambda)$ for all $\lambda \in (0,\lambda_o)$ and $F(\lambda) \geq G(\lambda)$ for all $\lambda > \lambda_o$.    
Then
    \begin{align*}
        \int g^s d \mu \leq \int f^s d \nu
    \end{align*}
    for all $s \geq s_0$.
\end{cor}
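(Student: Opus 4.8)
The plan is to mimic exactly the argument used for Corollary \ref{cor:NP}, which deduced the $L^s$-comparison from Lemma \ref{lem-intro: single crossing implies majorization} via the substitution $s \mapsto s^{1/s_0}$ in the distribution functions, but now substituting Lemma \ref{lem: single crossing implies majorization vanish} for Lemma \ref{lem-intro: single crossing implies majorization} and Theorem \ref{th: Chong vanish} for Theorem \ref{thm: chong}, restricting throughout to convex functions vanishing at $0$.

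First I would set $G_{s_0}(t) = \mu\{g^{s_0} > t\}$ and $F_{s_0}(t) = \nu\{f^{s_0} > t\}$, and observe, just as in the proof of Corollary \ref{cor:NP}, that $G_{s_0}(t) = G(t^{1/s_0})$ and $F_{s_0}(t) = F(t^{1/s_0})$, so that the single-crossing hypothesis on $F - G$ at $\lambda_0$ transfers to a single crossing of $F_{s_0} - G_{s_0}$ at $\lambda_0^{s_0}$, with the same sign pattern (since $t \mapsto t^{1/s_0}$ is an increasing bijection of $[0,\infty)$). Combined with the hypothesis $\int g^{s_0} d\mu = \int f^{s_0} d\nu < \infty$, the functions $g^{s_0}$ and $f^{s_0}$ satisfy exactly the hypotheses of Lemma \ref{lem: single crossing implies majorization vanish}, so we conclude $g^{s_0} \# \mu \prec_0 f^{s_0} \# \nu$.

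Next, by Theorem \ref{th: Chong vanish}, for any convex $\phi$ with $\phi(0) = 0$ we get $\int \phi(g^{s_0}) d\mu \leq \int \phi(f^{s_0}) d\nu$. Finally, for $s \geq s_0$ I would take $\phi(x) = x^{s/s_0}$, which is convex on $[0,\infty)$ and vanishes at $0$; this yields $\int g^s d\mu \leq \int f^s d\nu$, which is the claim. One small point worth flagging: the conclusion of Lemma \ref{lem: single crossing implies majorization vanish} as stated, $\int \phi(g) d\mu \leq \int \phi(f) d\nu$, is phrased for $g,f$ directly, so applying it with $g^{s_0}, f^{s_0}$ in place of $g, f$ and $\phi(x) = x^{s/s_0}$ gives the result in one step; I would simply make sure the integrability bookkeeping (that $\phi(f^{s_0})$ and $\phi(g^{s_0})$ are the relevant integrable/well-defined quantities) is consistent with the signed-measure conventions set up earlier.

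There is no real obstacle here: the statement is a routine transcription of the $\mu = \nu$ case, with the only genuine changes being (a) replacing $\prec_\mu$ by $\prec_0$, (b) invoking the $\phi(0)=0$ versions of the two auxiliary results, and (c) checking that the power map $x \mapsto x^{s/s_0}$ still vanishes at the origin — which it does. Indeed this is precisely why the authors say "their proof are left to the reader." The only mild care needed is that the equality case is not asserted in this version, so no additional argument about when $F_{s_0} = G_{s_0}$ is required.
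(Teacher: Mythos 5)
Your proof is correct and is exactly the argument the paper intends (indeed, it is why the paper "leaves it to the reader"): rescale distributions via $t\mapsto t^{1/s_0}$, apply Lemma \ref{lem: single crossing implies majorization vanish} to $g^{s_0},f^{s_0}$, then use Theorem \ref{th: Chong vanish} with $\phi(x)=x^{s/s_0}$, which is convex on $[0,\infty)$ and vanishes at $0$. As a minor aside, you correctly place the single crossing of $F_{s_0}-G_{s_0}$ at $\lambda_0^{s_0}$, whereas the paper's proof of the $\mu=\nu$ case (Corollary \ref{cor:NP}) contains a harmless typo giving $\lambda_0^{1/s_0}$.
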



Next we turn to a generalization of Lemma \ref{lem-intro: King Transport-Majorization}.

\begin{lem} \label{lem: King Transport-Majorization vanish}
Let $\mu$ and $\nu$ be measures on $\mathbb{R}^n$ (not necessarily finite), such that $\mu(dx) = u(x) dx$ and $\nu(dx) = v(x) dx$, $f,g : \mathbb{R}^n \to \mathbb{R}_+$ be non-negative and { measruable} such that $\int g d\mu < \infty$, and { assume that there exists a change of variables $T: \mathbb{R}^n \to \mathbb{R}^n$ from $gu$ to $fv$.}
Then for all convex functions $\phi: \mathbb{R}_+ \to \mathbb{R}$, with $\phi(0) = 0$,
\begin{align*}
    \int \phi(g) d\mu \geq  A  \int \phi(f/A) d \nu
\end{align*}
if
\begin{align*}
    A \coloneqq \supess \frac{u}{v(T)|T'|} \in (0,\infty).
\end{align*}
Further if $A \leq 1$, $f \# \nu \prec_0 g \# \mu$.\\
Also,
\begin{align*}
    \int \phi(g) d\mu \leq  A'  \int \phi(f/A') d \nu
\end{align*}
if 
\begin{align*}
    A' \coloneqq \infess \frac{u}{v(T)|T'|} \in (0,\infty),
\end{align*}
in which case $g \# \mu \prec_0 f \# \nu$ if $A' \geq 1$.
\end{lem}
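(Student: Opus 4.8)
The plan is to run the argument in the proof of Lemma~\ref{lem-intro: King Transport-Majorization}, but now carrying an arbitrary convex $\phi\colon\mathbb{R}_+\to\mathbb{R}$ with $\phi(0)=0$ and keeping track of the constants $A$ and $A'$; the two majorization statements will then fall out of Theorem~\ref{th: Chong vanish}. Two preliminary observations drive everything. First, convexity together with $\phi(0)=0$ makes $t\mapsto\phi(t)/t$ non-decreasing on $(0,\infty)$, so that $\phi(\lambda s)\geq\lambda\phi(s)$ for all $\lambda\geq 1$, $s\geq 0$, and $\phi(\lambda s)\leq\lambda\phi(s)$ for all $\lambda\in[0,1]$, $s\geq 0$. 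Second, the existence of a change of variables between $gu$ and $fv$ forces the mass identity $\int g\,d\mu=\int f\,d\nu$, which is finite by hypothesis; hence $\sigma=f\#\nu-g\#\mu$ satisfies $\int_0^\infty x\,d\sigma=0$ and Theorem~\ref{th: Chong vanish} is available.

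The core is the pointwise inequality
\[
\phi\!\big(g(x)\big)\,u(x)\;\geq\;A\,v\!\big(T(x)\big)\,|T'(x)|\;\phi\!\left(\frac{f(T(x))}{A}\right)\qquad\text{for a.e.\ }x\in\mathbb{R}^n .
\]
On $\{u>0\}$ the relation $gu=f(T)\,v(T)\,|T'|$ holding a.e.\ lets one write $g(x)=f(T(x))\,\rho(x)$ with $\rho(x)\coloneqq v(T(x))|T'(x)|/u(x)$ (here $v(T)|T'|>0$ a.e.\ on $\{u>0\}$, since otherwise $A=\infty$), and the definition of $A$ gives $1/\rho(x)\leq A$, i.e.\ $A\rho(x)\geq 1$, a.e.; applying $\phi(\lambda s)\geq\lambda\phi(s)$ with $\lambda=A\rho(x)$ and $s=f(T(x))/A$ and multiplying by $u(x)$ yields the displayed bound on $\{u>0\}$. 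On $\{u=0\}$ the left side vanishes, and so does the right side a.e.: indeed $gu=0=f(T)v(T)|T'|$ a.e.\ there, so a.e.\ either $v(T)|T'|=0$ or $f(T)=0$, and in the latter case $\phi(f(T)/A)=\phi(0)=0$.

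Integrating in $x$ (the integrand on the left vanishes on $\{u=0\}$) gives $\int\phi(g)\,d\mu\geq A\int v(T(x))|T'(x)|\,\phi(f(T(x))/A)\,dx$. To identify the last integral with $A\int\phi(f/A)\,d\nu$, apply the change-of-variables identity $\int h(T(x))\,g(x)u(x)\,dx=\int h(x)\,f(x)v(x)\,dx$ to the measurable function $h(y)\coloneqq\phi(f(y)/A)/f(y)$ (with the convention $0/0=0$): then $h(y)f(y)=\phi(f(y)/A)$ everywhere, so $h(T)\,gu=h(T)f(T)\,v(T)|T'|=\phi(f(T)/A)\,v(T)|T'|$ while $h\,fv=\phi(f/A)\,v$, and the identity becomes exactly $\int v(T)|T'|\,\phi(f(T)/A)\,dx=\int\phi(f/A)\,d\nu$. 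This proves $\int\phi(g)\,d\mu\geq A\int\phi(f/A)\,d\nu$. The companion estimate $\int\phi(g)\,d\mu\leq A'\int\phi(f/A')\,d\nu$ is obtained word for word, using $A'\rho\leq 1$ a.e.\ (since $A'=\infess$) and $\phi(\lambda s)\leq\lambda\phi(s)$ for $\lambda\in[0,1]$, which reverses every inequality.

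Finally, if $A\leq 1$ then $f/A\geq f$, so the monotonicity of $t\mapsto\phi(t)/t$ gives $A\,\phi(f/A)=\tfrac{\phi(f/A)}{f/A}\,f\geq\tfrac{\phi(f)}{f}\,f=\phi(f)$ pointwise (both sides $0$ where $f=0$), whence $\int\phi(g)\,d\mu\geq A\int\phi(f/A)\,d\nu\geq\int\phi(f)\,d\nu$ for all convex $\phi$ with $\phi(0)=0$; by Theorem~\ref{th: Chong vanish} this reads $f\#\nu\prec_0 g\#\mu$. Symmetrically, if $A'\geq 1$ then $A'\phi(f/A')\leq\phi(f)$, so $\int\phi(g)\,d\mu\leq\int\phi(f)\,d\nu$ and $g\#\mu\prec_0 f\#\nu$. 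The algebra and the transport computation are routine; the step I expect to require the most care is the bookkeeping on the null sets $\{u=0\}$ and $\{v(T)|T'|=0\}$ — together with the tacit assumption that $\int\phi(g)\,d\mu$ and $\int\phi(f/A)\,d\nu$ are well defined in $(-\infty,+\infty]$ — which is exactly what makes the clean pointwise inequality, and hence its integrated form, valid globally rather than only where every ratio above is finite and positive.
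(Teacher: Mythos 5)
Your proof is correct and takes essentially the same approach as the paper: the same pointwise inequality on densities (driven by $A\rho\ge 1$ a.e.\ together with $\phi(\lambda s)\ge\lambda\phi(s)$ for $\lambda\ge 1$), followed by the change-of-variables identity, and the same $A\,\phi(\cdot/A)\ge\phi(\cdot)$ step to pass to the majorization when $A\le 1$. The only difference is organizational: the paper specializes to $\phi(x)=[x-t]^+$ and then invokes Theorem~\ref{th: Chong vanish}, while you carry a general convex $\phi$ with $\phi(0)=0$ throughout, which is why the well-definedness caveat you flag becomes visible at the surface rather than being absorbed into that abstract equivalence.
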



\begin{remark} \label{rem:expsmall}
In the application we will use the lemma only when $A' \geq 1$ (\textit{i.e.}\ Lemma \ref{lem-intro: King Transport-Majorization}), which specifying to $\phi(x)=|x|^s$ leads to the family of inequalities 
$$
\int g^s d\mu \leq \int f^s d\nu , \qquad s \geq 1 .
$$
Note however that the lemma could potentially lead to a much stronger result in the situation where $A' > 1$. Indeed, for $\phi(x)=|x|^s$ the second conclusion of the lemma reads
$$
\int g^s d\mu \leq {A'}^{-(s-1)}\int f^s d\nu
$$
\textit{i.e.}\ there is an extra exponentially small factor. 
\end{remark}

\begin{remark} 
Observe that, if one looks for integral comparison, one can separate variables in, say, the conclusion $\int \varphi(g)d\mu \geq A \int \varphi(f/A)d\nu$, when $\varphi \geq 0$, by considering
$w(x)\coloneqq \sup_{u>0} \varphi(ux)/\varphi(u)$. Indeed, by definition of $w$ it holds
$$
\int \varphi(g)d\mu \geq  A \int \varphi(f/A)d\nu \geq \frac{A}{w(A)} \int \varphi(f) d\nu.
$$
Also, for $\varphi \geq 0$ satisfying the following so-called $\Delta_2$-condition (see \textit{e.g.}\ \cite{rao-ren}) $\varphi(2x) \leq K \varphi( x)$, for all $x >0$ and some $K \geq 2$, it holds
for $A >1$, 
$$
A \varphi\left (\frac{f}{A} \right) 
= 
K^{\log_K(A)} \varphi\left (\frac{f}{A} \right) 
\geq 
K^{\lfloor \log_K(A) \rfloor} \varphi\left (\frac{f}{A} \right)
\geq 
 \varphi \left(\frac{2^{\lfloor \log_K(A) \rfloor}}{A} f\right) 
$$
(where the floor signs denotes the entire part and $\log_K$ the logarithm in base $K$). As a conclusion we get that $\frac{2^{\lfloor \log_K(A) \rfloor}}{A} f \# \mu \prec_0 g \# \nu$. For the conclusion of the lemma involving $A'$, one needs to consider instead the $\nabla_2$-condition: $\varphi(x) \geq \frac{1}{2 \ell} \varphi(\ell x)$, for all $x >0$ and some $\ell >1$ \cite{rao-ren}.
\end{remark}

\begin{proof}[Proof of Lemma \ref{lem: King Transport-Majorization vanish}]
Define ${\mu_0}  = \mu/ A$, then ${G_0}(\lambda) \coloneqq {\mu_0}(g > \lambda) = \mu (g > \lambda)/ A = G(\lambda) / A$.  Further, define $\tilde{f} \coloneqq f/A$, $\tilde{F}(\lambda) = \nu( \tilde{f} > \lambda) = \nu ( f > \lambda A)= F(\lambda A)$, then
\begin{align*}
    \int [ g - t]^+ d \mu 
        &=
            \int_{\{ u > 0 \}} \left[ \frac{f(T)v(T)|T'|}{u} - t\right]^+ d\mu 
                \\
        &\geq 
            \int_{\{u > 0\}}
                {\left[ f(T(x)) - t A \right]^+ v(T(x))|T'(x)| dx}
                    \\
        &= 
            \int [ f - t A ]^+ d\nu
                \\
\end{align*}
where the inequality follows from the fact that $\left[\frac x y - t \right]_+ \geq [ x - t \bar y ]_+/y$ when $y \leq \bar y$ holds for $x,y,t \geq 0$, and the first equality follows a change of variables after observing that $\left[ f(T) - t A \right]^+ v(T)|T'| > 0 $ implies  $u > 0$.  Observe that since $\int [ f - t A ]^+ d\nu = A \int_t^\infty \tilde{F}(\lambda) d \lambda$ the above inequality can be re-written as
\begin{align*}
    \int_t^\infty G_0(\lambda) d\lambda \geq \int_t^\infty \tilde{F}(\lambda) d \lambda.
\end{align*}
Thus by Theorem \ref{thm: chong},
\begin{align*}
    \int \phi(g) d \mu_0 \geq \int \phi(\tilde{f}) d \nu
\end{align*}
and the result follows.
If $A \leq 1$, then since $\phi(0) = 0$, $\phi(t x) \geq t \phi(x)$ for $t \geq 1$ and hence
\begin{align*}
    \int A \varphi(f/A) d\nu \geq \int \varphi(f) d\nu,
\end{align*}
and the majorization follows from Theorem \ref{th: Chong vanish}.  

The argument for $A'$ is similar and left to the reader.
\end{proof}



{
\begin{remark} \label{rem:tm}
In short, Lemma \ref{lem: King Transport-Majorization vanish} demonstrates that \emph{transportation implies majorization}.  More explicitly, consider the case $u \equiv v \equiv 1$. If there exists a change of variable $T: \mathbb{R}^n \to \mathbb{R}^n$ from $g$ to $f$ (therefore such that $g(x) = f(T(x))|T'|$ for almost all $x$), and an $\varepsilon >0$ such that $|T'(x)| \geq \varepsilon$ holds almost surely, then $\tilde{T}(x) = T(x)/\varepsilon^{\frac 1 n}$ is a change of variable from $\tilde{g}(x) \coloneqq g(x)/\varepsilon$ to $\tilde{f}(x) = f( x \varepsilon^{\frac 1 n})$ (almost surely 
$    \tilde{g}(x) = \tilde{f}({\tilde{T}}(x)) |\tilde{T}'(x)|$),
with $|\tilde{T}'(x)|\geq 1$.  Thus $\tilde{f} \# \nu \prec_0 \tilde{g} \# \mu$.
\end{remark}
}

  In the following corollary we apply this observation to the convex function $\varphi(x) = x^s$.

\begin{cor} \label{cor:NPbis}
    For $f,g : \mathbb{R}^n \to \mathbb{R}_+$ integrable such that {there exists a change of vaiables $T$ from $g$ to $f$}
    with $\mathcal{T} \coloneqq \inf_x |T'(x)| > 0$, then
    \begin{align*}
           \int f^s(x) dx \leq \mathcal{T}^{1-s} \int g^s(x) dx , \qquad s \geq 1 .
    \end{align*}
\end{cor}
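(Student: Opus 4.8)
The plan is to deduce Corollary~\ref{cor:NPbis} directly from Lemma~\ref{lem: King Transport-Majorization vanish} (equivalently, from the trick recorded in Remark~\ref{rem:tm}), specialized to $u \equiv v \equiv 1$ and $\phi(x) = x^s$ with $s \geq 1$. First I would observe that since $T$ is a change of variables from $g$ to $f$, we have $g(x) = f(T(x)) |T'(x)|$ for a.e.\ $x$, and the assumption $\mathcal{T} = \inf_x |T'(x)| > 0$ gives
\[
    \frac{u(x)}{v(T(x)) |T'(x)|} = \frac{1}{|T'(x)|} \leq \frac{1}{\mathcal{T}}
\]
for a.e.\ $x$, so that $A \coloneqq \supess \frac{u}{v(T)|T'|} \leq \mathcal{T}^{-1} < \infty$. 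Hence Lemma~\ref{lem: King Transport-Majorization vanish} applies with this value of $A$ (or with $A$ replaced by the possibly smaller true essential supremum; either works, since the next step only uses an upper bound).

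Next I would invoke the first conclusion of Lemma~\ref{lem: King Transport-Majorization vanish}: for every convex $\phi \colon \mathbb{R}_+ \to \mathbb{R}$ with $\phi(0) = 0$,
\[
    \int \phi(g)\, dx \geq A \int \phi(f/A)\, dx.
\]
Taking $\phi(x) = x^s$ (which is convex on $\mathbb{R}_+$ and vanishes at $0$ for $s \geq 1$) yields
\[
    \int g^s(x)\, dx \geq A \int (f(x)/A)^s\, dx = A^{1-s} \int f^s(x)\, dx.
\]
Since $s \geq 1$ we have $1 - s \leq 0$, and since $A \leq \mathcal{T}^{-1}$ we get $A^{1-s} \geq \mathcal{T}^{-(1-s)} = \mathcal{T}^{s-1}$; wait—more carefully: $A \le \mathcal{T}^{-1}$ and the exponent $1-s\le 0$ means $A^{1-s}\ge (\mathcal{T}^{-1})^{1-s}=\mathcal{T}^{s-1}$. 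Therefore
\[
    \int g^s(x)\, dx \geq A^{1-s}\int f^s(x)\,dx \geq \mathcal{T}^{s-1} \int f^s(x)\, dx,
\]
which rearranges to $\int f^s(x)\, dx \leq \mathcal{T}^{1-s} \int g^s(x)\, dx$, as claimed. (One should note the integrability hypothesis on $f$ and $g$ ensures the quantities and the change of order of integration underlying Lemma~\ref{lem: King Transport-Majorization vanish} are legitimate; the condition $\int g\, d\mu < \infty$ there is exactly $\int g\, dx < \infty$.)

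I expect there to be essentially no obstacle here: this corollary is a one-line specialization. The only points requiring a moment's care are (i) confirming that $A$ is indeed finite, which is where $\mathcal{T} > 0$ is used, and (ii) keeping track of the direction of the inequality when raising $A \le \mathcal{T}^{-1}$ to the non-positive power $1-s$. Both are routine. An alternative, entirely equivalent, route is to apply Remark~\ref{rem:tm} directly: rescale $T$ to $\tilde T(x) = \mathcal{T}^{-1/n} T(x)$, which is a change of variables from $\tilde g = g/\mathcal{T}$ to $\tilde f(x) = f(x\mathcal{T}^{1/n})$ with $|\tilde T'| \geq 1$, conclude $\tilde f \# dx \prec_0 \tilde g \# dx$, apply it to $\phi(x) = x^s$, and undo the scaling; this produces the same bound $\int f^s \leq \mathcal{T}^{1-s} \int g^s$.
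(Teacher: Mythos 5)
Your argument is correct and uses the same key ingredient as the paper, namely Lemma~\ref{lem: King Transport-Majorization vanish} applied to $\phi(x)=x^s$; your primary route is actually a touch cleaner, since you invoke the general inequality $\int\phi(g)\,d\mu \geq A\int\phi(f/A)\,d\nu$ directly rather than first rescaling $T$ so that $A\leq 1$ and passing through the $\prec_0$ relation, as the paper does via Remark~\ref{rem:tm}. Your closing paragraph correctly identifies the paper's rescaling variant as an equivalent alternative, so the two arguments are essentially the same.
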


\begin{remark}
Written in terms of $L_p$ norms, the conclusion of the corollary becomes
\begin{align*}
    \| f \|_s \leq \mathcal{T}^{-\frac 1 {s'}} \|g\|_s
\end{align*}

Note that the statement is given for $s_o=1$ (we assumed $\int f{(x)dx}= \int g{(x)dx}$), where $s_o$ is as in Lemma \ref{lem-intro: transport}. Similar conclusion could be stated for any $s_o >0$.
\end{remark}

\begin{proof}
For $\tilde{g}(x) = g(x)/\mathcal{T}$, $\tilde{f}(x) = f(x \mathcal{T}^{\frac 1 n})$, and $\tilde{T}(x) = T(x)/\mathcal{T}^{\frac 1 n}$, by $g(x) = f(T(x))|T'(x)|$, it holds
$\tilde{g}(x) = \tilde{f}(\tilde{T}(x)) |\tilde{T}'(x)|$ 
and $|\tilde{T}'(x)| \geq 1$.
Thus, by Lemma \ref{lem: King Transport-Majorization vanish}, $\tilde{f} \# d\nu \prec_0 \tilde{g} \# \mu $ and applying $\int \varphi(\bar{f}) \leq \int \varphi(\bar{g})$ to the function $\varphi(x) = x^s$, we get
$\int \tilde{f}^s(x) dx \leq \int \tilde{g}^s(x) dx$.
This leads to the desired conclusion.
\end{proof}

We end this section with an alternative direct proof of Corollary \ref{cor:NPbis}, based on transport arguments.

\begin{lem} 
Let $\mu$ and $\nu$ be measures on $\mathbb{R}^n$ (not necessarily finite), such that $\mu(dx) = u(x) dx$ and $\nu(dx) = v(x) dx$, $f,g : \mathbb{R}^n \to \mathbb{R}_+$ be non-negative and { measruable} such that $\int f(x) v(x) < \infty$, and 
{assume that there exists a change of variables $T: \mathbb{R}^n \to \mathbb{R}^n$
from $gu$ to $fv$.}
Then for all $s \geq 1$, 
\begin{align*}
    \int {f^s d\nu \leq}  A^{s-1}  \int g^s d { \mu}
\qquad \mbox{if} \quad 
    A \coloneqq \sup \frac{u}{v(T)|T'|} \in (0,\infty).
\end{align*}
Also, {if $T$ is invertible}
\begin{align*}
    \int g^s d\mu \leq  {\frac{1}{{A'}^{s-1}}}  \int f^s d \nu
\qquad \mbox{if} \quad 
    A' \coloneqq \inf \frac{u}{v(T)|T'|} \in (0,\infty) .
\end{align*}
\end{lem}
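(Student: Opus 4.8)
The plan is to reduce the two claimed inequalities to the change-of-variables identity together with pointwise bounds, in the same spirit as the proof of Lemma~\ref{lem: King Transport-Majorization vanish}, but now working directly with the power function $x \mapsto x^s$ rather than with the hockey-stick functions $[x-t]^+$. The key observation is that the change-of-variables hypothesis $gu = (f\circ T)\,(v\circ T)\,|T'|$ lets us rewrite, for any measurable $h \geq 0$,
\[
    \int h(g)\, u\, dx = \int_{\{u>0\}} h\!\left( \frac{(f\circ T)(v\circ T)|T'|}{u}\right) u\, dx,
\]
and the factor inside $h$ differs from $f\circ T$ precisely by the ratio $\tfrac{(v\circ T)|T'|}{u}$, whose reciprocal is the quantity controlled by $A$ and $A'$.

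For the first inequality I would argue as follows. Write $q = \tfrac{u}{(v\circ T)|T'|}$ on $\{u>0\}$, so that $g = (f\circ T)/q$ there and $q \leq A$ almost everywhere. Since $s \geq 1$, the map $r \mapsto r^{s-1}$ is non-decreasing on $[0,\infty)$, so $q^{s-1} \leq A^{s-1}$, whence $g^s\, u = (f\circ T)^s q^{-s} u = (f\circ T)^s q^{1-s} (v\circ T)|T'| \geq A^{1-s} (f\circ T)^s (v\circ T)|T'|$ pointwise on $\{u>0\}$. Integrating and applying the change of variables to the right-hand side — noting as in the earlier proof that the integrand vanishes where $u=0$ — gives $\int g^s\, d\mu \geq A^{1-s}\int (f\circ T)^s (v\circ T)|T'|\, dx = A^{1-s}\int f^s\, d\nu$, which rearranges to the stated bound $\int f^s\, d\nu \leq A^{s-1}\int g^s\, d\mu$. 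The finiteness hypothesis $\int f\, d\nu < \infty$ together with $A<\infty$ ensures the integrals are not vacuously infinite.

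For the second inequality the roles are reversed and invertibility of $T$ is used to push mass the other way. With $q = \tfrac{u}{(v\circ T)|T'|} \geq A'$ almost everywhere and $s \geq 1$, now $q^{s-1} \geq {A'}^{s-1}$, so $g^s\, u = (f\circ T)^s q^{1-s}(v\circ T)|T'| \leq {A'}^{1-s}(f\circ T)^s(v\circ T)|T'|$ on $\{u>0\}$. Integrating and changing variables via $T$ yields $\int g^s\, d\mu \leq {A'}^{1-s}\int f^s\, d\nu$, which is the claim. The one point requiring care is that the change of variables identity $\int (f\circ T)^s (v\circ T)|T'|\,dx = \int f^s\, d\nu$ is legitimate: this is exactly the consequence of the change-of-variables definition recalled before Lemma~\ref{lem-intro: King Transport-Majorization} (applied to $h = f^s$), and invertibility of $T$ guarantees that $T$ transports $u\,dx$-mass onto $v\,dx$-mass with no loss, so that the support condition $\{(f\circ T)^s(v\circ T)|T'| > 0\} \subseteq \{u>0\}$ holds and nothing is missed on $\{u=0\}$.

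The main obstacle is bookkeeping around the set $\{u=0\}$ and the direction of the inequalities: one must check that the pointwise bound $q^{s-1} \leq A^{s-1}$ (respectively $\geq {A'}^{s-1}$) is applied on the correct set and that the omitted region $\{u=0\}$ contributes nothing to either side, which for the $A'$-statement is where invertibility of $T$ (equivalently $T^{-1}\#(f\nu) = g\mu$, as discussed after Lemma~\ref{lem-intro: King Transport-Majorization}) does the work. Everything else is the same one-line convexity/monotonicity estimate as in the proof of Lemma~\ref{lem: King Transport-Majorization vanish}, specialized to $\phi(x) = x^s$, and no new ideas are needed.
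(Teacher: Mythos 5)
Your proof is correct, but it takes a genuinely different route from the paper's. The paper argues via H\"older's inequality: it first uses the change of variables with $h = f^{s-1}$ to get $\int f^s\,d\nu = \int f^{s-1}(T)\,g\,d\mu$, then splits this via H\"older with exponents $\tfrac{s}{s-1}$ and $s$ into $\bigl(\int f^s(T)\,d\mu\bigr)^{(s-1)/s}\bigl(\int g^s\,d\mu\bigr)^{1/s}$, bounds $\int f^s(T)\,d\mu \leq A\int f^s\,d\nu$ by the definition of $A$ and one more change of variables, and finally rearranges the resulting self-referential inequality $\int f^s\,d\nu \leq \bigl(A\int f^s\,d\nu\bigr)^{(s-1)/s}\bigl(\int g^s\,d\mu\bigr)^{1/s}$. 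You instead derive the single pointwise identity $g^s u = (f\circ T)^s\,q^{1-s}\,(v\circ T)\,|T'|$ on $\{u>0\}$ (with $q = u/\bigl((v\circ T)|T'|\bigr)$), apply the monotonicity of $r \mapsto r^{1-s}$ for $s \geq 1$ to replace $q^{1-s}$ by $A^{1-s}$ (resp.\ ${A'}^{1-s}$), and integrate. This is precisely the argument of Lemma~\ref{lem: King Transport-Majorization vanish} specialized to $\phi(x) = x^s$ — in fact the present lemma is already a corollary of that one with $\phi(x)=x^s$, and the paper presents the H\"older-based argument precisely as a second, ``direct'' proof; your route collapses back to the earlier lemma and is more elementary in that it never needs H\"older. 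One small remark: your appeal to invertibility of $T$ to handle $\{u=0\}$ in the ${A'}$-case is not actually where invertibility does work — on $\{u=0\}$ the Monge--Amp\`ere relation $gu = (f\circ T)(v\circ T)|T'|$ already forces $(f\circ T)^s(v\circ T)|T'| = 0$ a.e., so both sides of your pointwise bound vanish there regardless. Invertibility is a hypothesis the paper keeps for the $A'$-direction because its own H\"older argument is naturally run through $T^{-1}$; your argument does not need it beyond what the change-of-variables definition already supplies.
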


{
\begin{remark}
As earlier mentioned, if $T$ is the Brenier map, then it is invertible in our setting and therefore the second part of the Lemma applies.
\end{remark}
}

\begin{proof}
{Since $T$ is a change of variables from $ug$ to $fv$,
}for any measurable function $h$,  $\int h(T) g d\mu = \int h f d\nu$. Applying this to $h=f^{s-1}$ it follows that
\begin{align*}
    \int f^s d\nu 
     & =
    \int f^{s-1} f d\nu \\
     & =
    \int f^{s-1}(T)gd\mu \\
    & \leq 
    \left( \int f^{s}(T) d\mu \right)^\frac{s-1}{s} \left( \int g^s d\mu \right)^\frac{1}{s} {,}
\end{align*}
where the last inequality follows from H\"older's inequality applied with $\frac{s-1}{s} + \frac{1}{s}=1$.

{By definition of $A$ and c}hanging variables  it holds
$\int f^{s}(T) d\mu \leq A \int f^{s}(T)v(T) |T'|  = {A} \int f^s {d\nu}$
from which the expected result follows.

The part with $A'$ is similar and left to the reader.
\end{proof}


}


\section{Ball's integral inequality and beyond}

The aim of this section is to prove Theorem \ref{thm: discrete Ball's inequality}. As a warm up, and to show the efficiency of the transport approach of Lemma \ref{lem-intro: transport} (and Lemma \ref{lem: King Transport-Majorization vanish}), we may first reprove Ball's integral inequality and a 2-dimensional analog due to Oleskiewicz and Pe{\l}czy\'nski \cite{OP00}. Both proofs are very short and elementary.

\subsection{Ball's integral inequality}
Recall that Ball's integral inequality asserts that
\begin{equation*} 
\int_{-\infty}^\infty g(x)^{s} dx < \int_{-\infty}^\infty f(x)^s dx , \qquad s >1 
\end{equation*}
with
$$
f(x) := e^{-\pi x^2} \qquad \mbox{and} \qquad g(x):=  \left(\frac{\sin(\pi x)}{\pi x}\right)^2 , \qquad x \in \mathbb{R} .
$$
Ball's original proof is based on series expansion. As already mentioned, Nazarov and Podkorytov gave a very elegant and simple alternative proof of the latter, using Lemma \ref{lem:NP}. In this section, we present yet another proof, very elementary, based on Lemma \ref{lem-intro: transport}.

Since $\int f(x)dx=\int g(x)dx=1$, $f$ and $g$ are probability densities on the line. In that case, the transport map $T$ that pushes forward the probability measure with density $f$ onto that of density $g$ is increasing and given by
$T:=F^{-1} \circ G$ with
$$
F(x):= \int_{-\infty}^x f(t)dt , \qquad G(x):= \int_{-\infty}^x g(t)dt , \qquad  x \in \mathbb{R} .
$$
 The transport map $T$  { is a change of variables from $g$ to $f$}. Note that it is one-to-one increasing on $\mathbb{R}$ and that it satisfies by construction the Monge-Amp\`ere Equation $g=f(T)T'$.
 
 Now Ball's integral inequality will follow from Lemma \ref{lem-intro: transport} if we can prove that $T' \leq 1$, which is the aim of the next lemma.

\begin{lem} \label{lem:T'}
For all $x \in \mathbb{R}$, $T'(x) \leq 1$.
\end{lem}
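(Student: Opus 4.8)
The goal is to show $T'(x) \le 1$ for all $x$, where $T = F^{-1}\circ G$, $F(x) = \int_{-\infty}^x e^{-\pi t^2}\,dt$ and $G(x) = \int_{-\infty}^x \left(\frac{\sin \pi t}{\pi t}\right)^2\,dt$. Differentiating the defining relation $F(T(x)) = G(x)$ gives $f(T(x))\,T'(x) = g(x)$, i.e.
\[
T'(x) = \frac{g(x)}{f(T(x))} = \frac{\left(\frac{\sin \pi x}{\pi x}\right)^2}{e^{-\pi T(x)^2}} = e^{\pi T(x)^2}\left(\frac{\sin \pi x}{\pi x}\right)^2.
\]
So the inequality $T'(x) \le 1$ is equivalent to $e^{\pi T(x)^2}\bigl(\frac{\sin \pi x}{\pi x}\bigr)^2 \le 1$, i.e.
\[
T(x)^2 \le -\frac{1}{\pi}\log\left(\frac{\sin \pi x}{\pi x}\right)^2 = \frac{2}{\pi}\log\left|\frac{\pi x}{\sin \pi x}\right|.
\]
By symmetry (both $f$ and $g$ are even, so $T$ is odd, $T(0)=0$) it suffices to treat $x \ge 0$, and the claim becomes $T(x) \le \psi(x)$ where $\psi(x) := \sqrt{\frac{2}{\pi}\log\bigl|\frac{\pi x}{\sin \pi x}\bigr|}$ (with $\psi$ extended by $0$ at $x=0$ and equal to $+\infty$ at integers $x \ge 1$).

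\textbf{Key steps.} First I would record that $T$ is the monotone transport map, so $T(x) \le \psi(x)$ for all $x \ge 0$ would follow from the single comparison of distribution functions: since $F$ is increasing, $T(x) = F^{-1}(G(x)) \le \psi(x)$ iff $G(x) \le F(\psi(x))$, i.e. iff
\[
\int_{-\infty}^x g \le \int_{-\infty}^{\psi(x)} f.
\]
However this seems awkward because $\psi$ is only implicitly related to $f$. A cleaner route: show directly that $\psi$ itself is (the restriction to $[0,\infty)$ of) an increasing function pushing $f$ forward past $g$ in the right sense — concretely, it is enough to prove that the function $x \mapsto \psi(x)$ satisfies $f(\psi(x))\psi'(x) \ge g(x)$ on the interval $(0,1)$ where $\psi$ is finite, together with $\psi(0) = T(0) = 0$; then a Gronwall / monotonicity comparison of the ODEs $T'f(T) = g$ and $\psi' f(\psi) \ge g$ with equal initial data forces $T \le \psi$ on $[0,1)$, and for $x \ge 1$ there is nothing to prove since $\psi = +\infty$ there while $T$ is finite (indeed $T(x) \le \psi(1^-)$... actually $T$ is bounded on $[0,1)$ by continuity, and beyond $x=1$ one argues on each fundamental interval, or simply notes $T$ is increasing and the bound $T'\le 1$ already holds on $[0,1)$, then iterates). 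Computing $f(\psi(x))\psi'(x)$: from $\pi\psi^2 = 2\log\frac{\pi x}{\sin\pi x}$ we get $2\pi\psi\psi' = 2\bigl(\frac1x - \pi\cot\pi x\bigr)$, so $\psi' = \frac{1}{\pi\psi}\bigl(\frac1x - \pi\cot\pi x\bigr)$, and $f(\psi(x)) = e^{-\pi\psi^2} = \bigl(\frac{\sin\pi x}{\pi x}\bigr)^2 = g(x)$; hence
\[
f(\psi(x))\psi'(x) = \frac{g(x)}{\pi\psi(x)}\left(\frac1x - \pi\cot\pi x\right),
\]
and the desired inequality $f(\psi)\psi' \ge g$ reduces to $\frac{1}{\pi\psi(x)}\bigl(\frac1x - \pi\cot\pi x\bigr) \ge 1$, i.e.
\[
\frac1x - \pi\cot\pi x \ge \pi\psi(x) = \sqrt{2\pi\log\frac{\pi x}{\sin\pi x}}, \qquad x \in (0,1).
\]

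\textbf{Main obstacle.} The crux is therefore the elementary but not entirely trivial inequality
\[
\left(\frac1x - \pi\cot\pi x\right)^2 \ge 2\pi\,\log\frac{\pi x}{\sin\pi x}, \qquad x \in (0,1).
\]
I expect this to be the hard step. I would attack it by substituting $y = \pi x \in (0,\pi)$ and setting $h(y) := \frac1y - \cot y > 0$; one checks $h(y) = \sum_{k\ge 1}\frac{2y}{k^2\pi^2 - y^2} > 0$ and, more usefully, $\bigl(\log\frac{y}{\sin y}\bigr)' = h(y)$. So with $L(y) := \log\frac{y}{\sin y} = \int_0^y h(t)\,dt$, the claim is $\pi^2 h(y)^2 \ge 2\pi L(y)$, wait — recomputing the scaling: $\frac1x - \pi\cot\pi x = \pi h(y)$ and $\log\frac{\pi x}{\sin\pi x} = L(y)$, so the inequality is $\pi^2 h(y)^2 \ge 2\pi L(y)$, i.e. $\pi h(y)^2 \ge 2 L(y)$. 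Both sides vanish at $y=0$; differentiating, it suffices to show $2\pi h(y)h'(y) \ge 2h(y)$, i.e. (since $h>0$) $\pi h'(y) \ge 1$ on $(0,\pi)$. Since $h'(y) = -\frac{1}{y^2} + \frac{1}{\sin^2 y} = \sum_{k\ge1}\bigl(\frac{1}{(k\pi - y)^2} + \frac{1}{(k\pi+y)^2}\bigr) \ge \frac{1}{(\pi-y)^2} \ge \frac{1}{\pi^2}$... that only gives $\pi h'(y) \ge 1/\pi$, not enough; but using the full series $h'(y) = \frac{1}{\sin^2 y} - \frac{1}{y^2}$ and the bound $\sin y \le y$ we actually want a sharper lower bound near $y$ small where $h'(y) \to 1/3$, times $\pi$ gives $\pi/3 \approx 1.05 > 1$ — so the inequality $\pi h'(y)\ge 1$ is plausible and can be verified by showing $h'(y)$ is increasing (each term $\frac{1}{(k\pi\mp y)^2}$ is increasing in $y\in(0,\pi)$ for the $k\pi - y$ terms and for $k\pi+y$) hence $h'(y) \ge h'(0^+) = 1/3 > 1/\pi$. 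This reduces everything to a clean monotonicity fact, and I would organize the write-up around this chain: $T'\le 1 \iff T \le \psi \iff$ ODE comparison $\iff \pi h(y)^2 \ge 2L(y) \iff \pi h'(y) \ge 1 \iff h'$ increasing with $h'(0^+) = 1/3$.
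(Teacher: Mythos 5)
Your reduction is sound up to a point: differentiating $F(T)=G$ gives $T'=g/f(T)$, and since $f$ is even and decreasing on $[0,\infty)$ the desired bound $T'\le 1$ for $x\ge 0$ is equivalent to $T(x)\le \psi(x)$ with $\psi = f^{-1}\circ g$, i.e.\ to $G(x)\le F(\psi(x))$. Your handling of $x\in(0,1)$ is correct and, in fact, a genuinely different route from the paper's: since $f(\psi)=g$, the derivative of $F(\psi(x))-G(x)$ is $g(x)\bigl(\psi'(x)-1\bigr)$, so on $(0,1)$ it suffices to have $\psi'\ge 1$, which you reduce (after $y=\pi x$, $h(y)=\tfrac1y-\cot y$, $L(y)=\log\tfrac{y}{\sin y}$) to $\pi h^2\ge 2L$ and then, differentiating once more, to $\pi h'\ge 1$. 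The series $h'(y)=\sum_{k\ge1}\bigl((k\pi-y)^{-2}+(k\pi+y)^{-2}\bigr)$ shows $h'$ is increasing on $(0,\pi)$ with $h'(0^+)=1/3$, so $\pi h'>\pi/3>1$. This is a clean alternative to the paper's argument on $(0,1)$, which instead derives $g\le f$ pointwise from the product formula $\bigl(\tfrac{\sin\pi x}{\pi x}\bigr)^2=\prod_k(1-x^2/k^2)^2\le e^{-\pi^2 x^2/3}\le e^{-\pi x^2}$.

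However, there is a genuine gap for $x>1$, and it is not a triviality. On each interval $(k,k+1)$, $k\ge1$, the function $\psi$ blows up at both endpoints and dips to a finite minimum in the interior, so $\psi'<0$ on the first part of the interval and the inequality $\psi'\ge1$ (equivalently $\psi'f(\psi)\ge g$) fails there. Consequently the ODE/Gronwall comparison cannot be restarted at $x=k$, and the suggestion that ``$T'\le 1$ already holds on $[0,1)$, then iterate'' is circular: knowing $T(1)\le 1<\infty=\psi(1^-)$ says nothing about whether the increasing $T$ stays below $\psi$ as $\psi$ descends to its local minimum near $x\approx k+\tfrac12$. The comparison is numerically true but genuinely needs a proof (for instance, $\psi(3/2)\approx 0.99$ while $T(3/2)\approx 0.73$ — not a trivial margin, and the margin does not improve qualitatively as $k$ grows, since both $T(k+\tfrac12)$ and $\psi(k+\tfrac12)$ scale like $\sqrt{\log k}$). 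The paper handles $x>1$ by an entirely separate argument: it rewrites $G(x)\le F(\psi(x))$ as the tail inequality $\int_{\psi(x)}^\infty f\le\int_x^\infty g$, bounds the left side by the Gaussian tail estimate $\int_\psi^\infty e^{-\pi u^2}du\le \tfrac{e^{-\pi\psi^2}}{2\pi\psi}=\tfrac{g(x)}{2\pi\psi(x)}$, bounds the right side below by summing $\int_k^{k+1}g\ge\tfrac{1}{3\pi^2}\int_k^{k+1}u^{-2}du$ over $k\ge\lceil x\rceil$, and then verifies the resulting elementary inequality $y\log y\ge 9\pi$ for $y>20$. That tail argument (or something equivalent) is the missing piece of your write-up; without it the proposal proves $T'\le1$ only on $(-1,1)$.
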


\begin{proof}
Observe that, $T'\leq 1$ on $\mathbb{R}$ is equivalent\footnote{As a curiosity, in other contexts, the expression $F' \circ F^{-1}$ appears to be the isoperimetric profile associated to the probability measure with density $f$, and similarly for $g$. Therefore, the lemma asks for a comparison between two isoperimetric profiles.} to saying that $G' \circ G^{-1} \leq F' \circ F^{-1}$, and so to $g \circ G^{-1} \leq f \circ F^{-1}$ on $(0,1)$. Since $f$ and $g$ are even, $g \circ G^{-1}$ and $f \circ F^{-1}$ are symmetric about $1/2$. Therefore one needs to prove the inequality on $(1/2,1)$ only. The density $f$ being decreasing on $\mathbb{R}_+$ with inverse $f^{-1}(y)= \sqrt{\frac{1}{\pi} \log \left( \frac{1}{y}\right)}$ and $F$ being increasing, the inequality
$g \circ G^{-1} \leq f \circ F^{-1}$ on $(1/2,1)$ is in turn equivalent to $G \leq F \circ f^{-1} \circ g $ on $(0,\infty)$. This can be recast as
\begin{align} \label{eq:forJames}
\int_{-\infty}^x \left(\frac{\sin(\pi u)}{\pi u}\right)^2 du \leq \int_{-\infty}^{\sqrt{\frac{2}{\pi} \log \left( \left|\frac{\pi x}{\sin(\pi x)}\right| \right)}} e^{-\pi u^2} du,
\end{align}
for $x >0$.
For $x \in (0,1)$ we mimic an argument borowed from \cite{NP00}. Using the infinite product representation of the sinus,  for $x \in (0,1)$, on one hand one has
\begin{align*}
\left(\frac{\sin(\pi x)}{\pi x}\right)^2 
& = 
\prod_{k=1}^\infty \left(1 - \frac{x^2}{k^2} \right)^2 \\
& \leq 
\prod_{k=1}^\infty e^{-\frac{2x^2}{k^2}} \\
& = e^{-\frac{\pi^2 x^2}{3}} \\
& \leq 
e^{-\pi x^2} .
\end{align*}
This implies $\sqrt{\frac{2}{\pi} \log \left( \left|\frac{\pi x}{\sin(\pi x)}\right| \right)} \geq x$ for any $x \in (0,1)$ and therefore Inequality \eqref{eq:forJames} holds for any $x \in [0,1]$.
{
For $x > 1$, we reformulate \eqref{eq:forJames} as
\begin{equation} \label{eq:forJames2}
\int_{\sqrt{\frac{2}{\pi} \log \left( \left|\frac{\pi x}{\sin(\pi x)}\right| \right)}}^{\infty} e^{-\pi u^2} du
\leq 
\int_x^{\infty} \left(\frac{\sin(\pi u)}{\pi u}\right)^2 du , \qquad x >0,
\end{equation} 
and observe that} for $y=\sqrt{\frac{2}{\pi} \log \left( \left|\frac{\pi x}{\sin(\pi x)}\right| \right)}$, it holds
\begin{align*}
        \int_{y}^\infty e^{-\pi u^2} du
            & \leq
                \int_{y}^\infty \frac{2\pi u}{2\pi y} e^{-\pi u^2} du \\
            & =
                \frac{e^{-\pi y^2}}{2\pi y} \\
            & = 
            \frac {\left( \frac{\sin( \pi x)}{\pi x} \right)^2}{2\sqrt{ 2\pi \log \left( \left|\frac{\pi x}{\sin(\pi x)}\right| \right)}} 
            .
    \end{align*}
Meanwhile for an integer $k \geq 2$,
    \begin{align*}
        \int_{k}^{k+1} \left(\frac{\sin (\pi u)}{\pi u} \right)^2 du 
           & \geq 
        \frac 1 {(k+1)^2 \pi^2} \int_{k}^{k+1} \sin^2(\pi u) du \\
           & =
            \frac{1}{2\pi^2} \frac{1}{(k+1)^2} .
    \end{align*}
    For comparison, $\int_{k}^{k+1} \frac{1}{u^2} du  = \frac 1 {k (k+1)}$ so that, for $k \geq 2$
        \begin{align*}
            \int_{k}^{k+1} \left(\frac{\sin (\pi u)}{\pi u} \right)^2 du 
               & \geq
                    \frac{k}{2\pi^2(k+1)} \int_{k}^{k+1} \frac{1}{u^2} du  \\
               & \geq
                    \frac 1 {3\pi^2} \int_{k}^{k+1} \frac{1}{u^2} du .
        \end{align*}
    Put $\lceil x \rceil = \inf_{k \in \mathbb{Z}} \{ k : k  \geq x \}$ for the ceiling part of $x$ that is greater or equal to 2, since $x>1$.    We have
    \begin{align*}
        \int_{x}^{\infty} \left(\frac{\sin (\pi u)}{\pi u} \right)^2 du 
          &  \geq 
         \frac 1 {3\pi^2} \int_{\lceil x \rceil}^{\infty} \frac{1}{u^2} du   \\
         & = \frac 1 {3 \pi^2 \lceil x \rceil} .
    \end{align*} 
We claim that 
$$
\frac {(\sin(\pi x))^2/\left( \pi x \right)^2}{2\sqrt{ 2\pi \log \left( \left|\frac{\pi x}{\sin(\pi x)}\right| \right)}}  \leq \frac 1 {3 \pi^2 \lceil x \rceil}, \qquad \forall x >1
$$ 
from which  \eqref{eq:forJames2} for $x>1$ immediately follows. Therefore, to complete the proof of the lemma, it only remains to  prove the claim. We proceed to further reductions. Squaring and exploiting the fact that, for $x>1$, $\frac{\lceil x \rceil}{x} \leq 2$ and $|\sin(\pi x)| \leq 1$, the claim will in fact be a consequence of 
$$
9\pi\leq y \log y , \qquad y>20
$$
where we changed variable ($y =\left|\frac{\pi x}{\sin(\pi x)}\right|^2$ whose minimum is achieved on the interval $[\pi,2\pi]$ and is greater than $20$).
The claim immediately follows and this ends the proof of the lemma.
\end{proof}

\begin{remark}
In \cite[Proposition 7]{konig-koldobsky} the authors prove, using Nazarov and Podkorytov's lemma, the following inequality which is a refined version of Ball's integral inequality
\begin{equation*} 
\int_{-\infty}^\infty      \left| \frac{\sin (\pi u)}{\pi u} \right|^{2s} du \leq \sqrt{\frac{3}{\pi}} \int_{-\infty}^\infty e^{-\pi s u^2} du, \qquad s \geq \frac{9}{8} .
\end{equation*}
The refinement is coming from the factor $\sqrt{3/\pi} < 1$.
We mention that Lemma \ref{lem-intro: transport} can also be applied to prove such an inequality with the exact same line of argument as above.
\end{remark}


\subsection{Oleszkiewicz-{Pe{\l}czy{\'n}ski}: a 2-dimensional Ball's type integral inequality.}

For $v \geq 1$, let $j_v(s) = 2^v \Gamma(v + 1) J_v(s)/s^v$ where $J_v(s) = \sum_{m=0}^\infty \frac{(-1)^m}{m! \Gamma(m + v +1)} \left( \frac s 2 \right)^{2m + v}$ is the Bessel function of order $\nu$.
For $p \geq 2$ and $m \geq 2$ integer, consider the following integral inequality
\begin{equation} \label{eq:brzezinski}
    \int_0^\infty |j_{\frac m 2}(s)|^p s^{m-1} ds \leq \int_0^\infty \left(e^{-\frac{s^2}{2m+4}} \right)^p s^{m-1} ds.
\end{equation}

The case  $m=1$, as observed in \cite[Remark 4.3]{OP00}, reduces to Ball's integral inequality since
$J_{1/2}(t)= \left( \frac{2}{\pi} \right)^{1/2} \frac{\sin t}{t}$ (see \cite[Page 54 Inequality (3)]{watson}) and therefore $j_{1/2}(s)=\frac{\sin s}{s}$.

The case $m=2$, treated below with the help of our transport Lemma \ref{lem: King Transport-Majorization vanish},  was originally proved in \cite{OP00} { through careful and direct analysis, using detailed expansions and approximations. Yet another proof can be found in \cite{dirksen} {which} relied on Nazarov-Podkorytov's lemma} (to be complete, H. Dirksen mentions the existence of an unpublished note by K\"onig that inspired him and that uses Nazarov-Podkorytov's lemma).

We refer the reader to Remark \ref{rem:brzezinski} below for more comments on 
Inequality \eqref{eq:brzezinski} for $m \geq 3$ and related inequalities.

For now, we fix $m=2$. 
Set $s=p/2$, $f(x)=e^{-x^2/4}$ and $g(x)=\left( \frac{2J_1(x)}{x} \right)^2$, $x >0$. Set $\mu$ for the measure on $(0,\infty)$ with density $x$ with respect to the Lebesgue measure. Then {the} inequality we want to prove takes the form
$$
\int_0^\infty  g^s d\mu  
\leq 
\int_0^\infty f^s d\mu,
\qquad s \geq 1 .
$$ 
Observe that $f$ and $g$ have same mass: $\int f d\mu= \int g d\mu= 2$. 
Let $T \coloneqq F^{-1} \circ G$ with, for $x >0$, 
\begin{equation} \label{eq:explicit}
F(x) \coloneqq \int_0^x f(t) \mu(dt) = 2(1-e^{-x^2/4})
, \qquad
G(x)\coloneqq \int_0^x g(t) \mu(dt)= 2 - 2(J_1^2(x)+J_0^2(x))
\end{equation}
(see Lemma \ref{lem:integral-of-g} below for the computation of $G$).
By construction, $T$ is the (increasing) pushforward of $g\mu$ onto $f\mu$, $T\# (g\mu) = f\mu$ and satisfies the Monge-Amp\`ere equation $g(x)x=f(T(x))T(x)|T'(x)|$.
Therefore, by Lemma \ref{lem: King Transport-Majorization vanish} (with $u(x)=v(x)=x$), the desired inequality would follow if we can prove that $T(x)T'(x) \leq x$.
This is achieved in the next lemma.

\begin{lem}
For all $x >0$ it holds  $T(x)T'(x) \leq x$.
\end{lem}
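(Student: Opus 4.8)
The plan is to turn the differential inequality $T(x)T'(x)\le x$ into a pointwise inequality between the Bessel functions $J_0$ and $J_1$, using the Monge-Amp\`ere equation together with the explicit formulas \eqref{eq:explicit} for $F$ and $G$.

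\emph{Step 1 (reduction to a Bessel inequality).} Since $T$ is increasing, $|T'|=T'$, so the Monge-Amp\`ere equation $g(x)x=f(T(x))T(x)|T'(x)|$ gives $T(x)T'(x)=g(x)x/f(T(x))$; as $f>0$ and $x>0$, the desired bound $T(x)T'(x)\le x$ is equivalent to $g(x)\le f(T(x))$. Now $f(y)=e^{-y^2/4}$, and from $F(T(x))=G(x)$ and \eqref{eq:explicit} we get $2\bigl(1-e^{-T(x)^2/4}\bigr)=2-2\bigl(J_0^2(x)+J_1^2(x)\bigr)$, hence
\[
f(T(x))=e^{-T(x)^2/4}=J_0^2(x)+J_1^2(x).
\]
Since $g(x)=4J_1^2(x)/x^2$, the lemma reduces to proving
\[
\psi(x):=x^2\bigl(J_0^2(x)+J_1^2(x)\bigr)-4J_1^2(x)\ \ge\ 0,\qquad x>0.
\]

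\emph{Step 2 (proving $\psi\ge0$).} I would establish this by checking that $\psi(0^+)=0$ (immediate, as $J_1(0)=0$) and that $\psi'\ge0$ on $(0,\infty)$. Differentiating with the standard identities $J_0'=-J_1$ and $J_1'=J_0-J_1/x$, the terms $\mp 2x^2J_0J_1$ coming from $x^2J_0^2$ and $x^2J_1^2$ cancel, and what remains is a perfect square:
\[
\psi'(x)=2xJ_0^2(x)-8J_0(x)J_1(x)+\frac{8J_1^2(x)}{x}=\frac{2}{x}\bigl(xJ_0(x)-2J_1(x)\bigr)^2\ \ge\ 0.
\]
Since $\psi$ extends continuously to $x=0$ with value $0$ and $\psi'$ is integrable near $0$, we get $\psi(x)=\int_0^x\psi'(t)\,dt\ge0$ for all $x>0$, which is exactly what was needed, and the lemma follows.

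\emph{Main obstacle.} The only genuinely computational point is the evaluation of $\psi'$: one has to notice the cancellation of the two cross terms and recognize the leftover $2xJ_0^2-8J_0J_1+8J_1^2/x$ as $\tfrac{2}{x}(xJ_0-2J_1)^2$; everything else is bookkeeping. (As a sanity check, for $x\ge2$ the reduced inequality $4J_1^2/x^2\le J_0^2+J_1^2$ is trivial since $4J_1^2/x^2\le J_1^2$, but the auxiliary function $\psi$ handles the whole range $x>0$ uniformly.)
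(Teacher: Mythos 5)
Your proof is correct, and it takes a genuinely different — and more unified — route than the paper's. Both arguments begin with the same reduction: by the Monge--Amp\`ere equation and monotonicity of $T$, the claim $T(x)T'(x)\le x$ is equivalent to $g(x)\le f(T(x))$. But the paper then splits at $x=2$: on $(0,2)$ it proves $g\le f$ directly via an alternating-series estimate (showing $xe^{-x^2/8}\ge 2J_1(x)$), deduces $T(x)\le x$ and hence $f(T)\ge f\ge g$; on $[2,\infty)$ it uses the explicit formula $f(T(x))=J_0^2(x)+J_1^2(x)$ and the trivial bound $4/x^2\le1$ to get $g(x)=\tfrac4{x^2}J_1^2\le J_1^2\le J_0^2+J_1^2$. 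You instead use the identity $f(T(x))=J_0^2(x)+J_1^2(x)$ on all of $(0,\infty)$, reduce to $\psi(x):=x^2(J_0^2+J_1^2)-4J_1^2\ge0$, and then exhibit $\psi'(x)=\tfrac2x\bigl(xJ_0(x)-2J_1(x)\bigr)^2\ge0$ as a perfect square after cancellation of the cross terms — I checked the algebra using $J_0'=-J_1$ and $J_1'=J_0-J_1/x$, and it is right. Since $xJ_0-2J_1=O(x^3)$ near $0$, $\psi'$ is integrable at $0$ and $\psi(0^+)=0$, so $\psi\ge0$ follows by integration. What your approach buys: no case split, no alternating-series estimate, and the appearance of the perfect square reveals a pleasant hidden structure. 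What the paper's approach buys: on $[2,\infty)$ the argument is one line, and the small-$x$ argument re-uses the Nazarov--Podkorytov-style series comparison that appears elsewhere in the paper.
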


\begin{proof}
We distinguish between two cases.\\
$\bullet$ For  $x \in (0,2)$ we prove first that $g(x) \leq f(x)$. Expanding, this is equivalent to proving that (note that $J_1 \geq 0$ on $[0,2]$)
\begin{align*}
xe^{-x^2/8}-2J_1(x) = \sum_{n=0}^\infty (-1)^n \frac{1}{n!}\left( \frac{1}{2^{n-1}} - \frac{1}{(n+1)!} \right) \left( \frac{x}{2} \right)^{{2n+1}} \geq 0 , \qquad 0 \leq x \leq 2 .
\end{align*}
We set, for $n \geq { 0}$,
$u_n := \frac{1}{n!}\left( \frac{1}{2^{n-1}} - \frac{1}{(n+1)!} \right) \left( \frac{{x}}{2} \right)^{{2n+1}}$ so that $xe^{-x^2/8}-2J_1(x) = \sum_{n={ 0}}^\infty (-1)^nu_n$ is an alternating series ($u_n > 0$ for $n \geq { 0})$. Now for $x \in [0,2]$, it is easy to check that $(u_n)_n$ is decreasing. In particular the alternating series has the sign of its first term {$u_0$}, which is positive, proving the claim. 

Now $g \leq f$ on $[0,2]$ implies that $G \leq F$ and therefore that $T(x) \leq x$ on $[0,2]$. In particular, the claimed inequality $T(x)T'(x) \leq x$ would be a consequence of $T' \leq 1$, or $g \leq f(T)$. Since $g \leq f$ this is implied by $f(T) \geq f$, which holds since $f$ is decreasing and $T(x) \leq x$.

{$\bullet$ For $x \geq 2$, since $T'(x)T(x)/x=g(x)/f(T(x))$ by the Monge-Amp\`ere Equation, the thesis $T(x)T'(x) \leq x$ is equivalent to saying that $g(x) \leq f(T(x))$, $x \geq 2$. 
Using the explicit expressions of $F$ and $G$ given in \eqref{eq:explicit}, this amounts to proving that 
$$
g(x) \leq J_1^2(x) + J_0^2(x), \qquad x  \geq 2 .
$$
This trivially holds, since for $x \geq 2$, 
$g(x) = \frac{4}{x^2}J_1^2(x) \leq J_1^2(x)$. }
\end{proof}


\begin{lem} \label{lem:integral-of-g}
For all $x \geq 0$, it holds
$$
\int_0^x g(t)tdt = 2 -2 (J_1^2(x) + J_0^2(x)) .
$$
\end{lem}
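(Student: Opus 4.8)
The plan is to prove the identity by differentiation, taking advantage of the fact that both sides vanish at $x=0$. Recalling that $g(x)=\left(\frac{2J_1(x)}{x}\right)^2$, the integrand is $g(t)t = \dfrac{4J_1^2(t)}{t}$. Since $J_1(t)=\frac{t}{2}+O(t^3)$ near the origin, we have $g(t)t \sim t$ there; in particular $t\mapsto g(t)t$ is continuous on $[0,\infty)$, the integral $\int_0^x g(t)t\,dt$ is well defined, and the function $x\mapsto \int_0^x g(t)t\,dt$ is $C^1$ on $(0,\infty)$ with derivative $\dfrac{4J_1^2(x)}{x}$. On the other side, at $x=0$ the right-hand side equals $2-2(J_1^2(0)+J_0^2(0)) = 2-2(0+1)=0$, matching $\int_0^0 g(t)t\,dt = 0$.

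Next I would differentiate the right-hand side using the standard Bessel recurrences $J_0'(x)=-J_1(x)$ and $J_1'(x)=J_0(x)-\frac{1}{x}J_1(x)$ (the latter being the identity $(xJ_1(x))'=xJ_0(x)$ rewritten). This gives
\begin{align*}
\frac{d}{dx}\Big(2-2\big(J_1^2(x)+J_0^2(x)\big)\Big)
&= -4\big(J_1(x)J_1'(x)+J_0(x)J_0'(x)\big)\\
&= -4\Big(J_1(x)\big(J_0(x)-\tfrac{1}{x}J_1(x)\big)-J_0(x)J_1(x)\Big)\\
&= \frac{4J_1^2(x)}{x}.
\end{align*}
Hence both sides of the claimed identity have the same derivative on $(0,\infty)$ and agree at $x=0$, so they coincide for every $x\geq 0$ by the fundamental theorem of calculus.

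I do not anticipate any genuine obstacle here: the argument is a one-line differentiation once the correct Bessel identities are invoked. The only point requiring a small word of care is the behavior at the origin — integrability and continuity of $g(t)t$ and the vanishing of both sides at $x=0$ — and this is immediate from the expansion $J_1(t)=\tfrac{t}{2}+O(t^3)$. (Alternatively, one could integrate the power series of $4J_1^2(t)/t$ term by term, but that route is more cumbersome and offers no advantage.)
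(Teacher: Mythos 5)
Your proof is correct and is essentially the same argument as the paper's: the paper writes the integrand $4J_1^2(t)/t = 4(J_0')^2/t$ as the exact derivative $-2\,d/dt\,[J_0'^2+J_0^2]$ using the Bessel ODE $J_0''+J_0'/t+J_0=0$ together with $J_1=-J_0'$, and integrates; you differentiate the right-hand side directly using the equivalent recurrences $J_0'=-J_1$ and $J_1'=J_0-J_1/x$ and check agreement at $x=0$. These are the same computation read in opposite directions.
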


\begin{proof}
Observe that $J_1=-J_0'$ and that $J_0$ is satisfying the following equation 
$J_0''(t)+J_0(t) =J_0'(t)/t$. Therefore
$$
\frac{J_1^2(t)}{t}= \frac{J_0'(t)^2}{t} = - J_0''(t)J_0'(t) - J_0'(t)J_0(t) 
$$
from which the result follows by integration.
\end{proof}

\begin{remark} \label{rem:brzezinski}
We comment on Inequality \eqref{eq:brzezinski}. 
For $m \geq 3$, observe that $\int_0^\infty e^{-y^2/2} y^{m-1} dy= 2^\frac{m-2}{2}\Gamma(m/2)$. Therefore,
after change of variable
$$
 \int_0^\infty \left(e^{-\frac{s^2}{2m+4}} \right)^p s^{m-1} ds
 = 
\left(\frac{m+2}{p}\right)^\frac{m}{2} \int_0^\infty e^{-\frac{x^2}{2}} x^{m-1} dx
=
\frac{1}{2}\Gamma(m/2) \left(\frac{2m+4}{p}\right)^\frac{m}{2} .
$$
In turn, Inequality \eqref{eq:brzezinski} {{can}} be recast as
$$
\int_0^\infty |j_{\frac m 2}(s)|^p s^{m-1} ds \leq \frac{1}{2}\Gamma(m/2) \left(\frac{2m+4}{p}\right)^\frac{m}{2}.
$$
Such an inequality was proved by Brzezinski, \cite[Lemma 3.5]{brzezinski2013volume}, 
for $m \geq 3$ integer and $p > p_o >2$ for some well defined $p_o$. 
His proof relies on Nazarov-Podkorytov's lemma.

Let us mention two other inequalities of similar type.
In \cite[Inequality (10)]{OP00} the authors mention the following one
$$
 \int_0^\infty |j_{\frac m 2}(s)|^p s^{m-1} ds 
 \leq 
 \left( \int_0^\infty |j_{\frac m 2}(s)|^2 s^{m-1} ds \right) \frac{2^{m/2}}{p^{m/2}}, \qquad p \geq 2 . 
$$
They suggest that this might hold iff $1 \leq m \leq 2$. They also mention that "K\"onig noticed that it is false for $m=3,4,\dots$"...

On the other hand, Dirksen \cite[Theorem 4]{dirksen} proved the following inequality
$$
 \int_0^\infty |j_{\frac m 2}(s)|^p s^{m-1} ds \leq \sqrt{\frac{\pi(m+2)}{2p}} , \qquad p \geq 2, \quad m \geq 2 \mbox{ integer}.
$$
It should be clear that the latter is different from \eqref{eq:brzezinski}. In fact, it is equivalent to saying that
$$
 \int_0^\infty |j_{\frac m 2}(s)|^p s^{m-1} ds \leq \int_0^\infty \left(e^{-\frac{x^2}{m+2}}\right)^p dx , \qquad p \geq 2, \quad m \geq 2 \mbox{ integer}.
$$
The difference between the latter and \eqref{eq:brzezinski} is coming from {the absence of} the factor $s^{m-1}$ in the integral in {the} right hand side. Dirksen's proof uses
Nazarov-Podkorytov's lemma (for $m=2$ and $m \geq 5$), the case $m=3,4$ uses the techniques of \cite{OP00}).

The above inequalities are related to convex geometry (slices of cylinders, volume estimates etc.). Their proofs are quite involved and we hope that the techniques developed in Lemma \ref{lem: King Transport-Majorization vanish} could help for smoother approaches.
\end{remark}


\subsection{Discrete analog of Ball's integral inequality: proof of Theorem \ref{thm: discrete Ball's inequality}}\label{sec:transportBall}

We recast the statement of Theorem \ref{thm: discrete Ball's inequality}, which can be considered a discrete analog of Ball's integral inequality, as $L^p$-norm comparison.  That is, for $p \geq 2$, and $2 \leq n \in \mathbb{N}$,
\begin{align} \label{eq:discreteBall}
        \int_{-\frac 1 2}^{\frac 1 2} \left|\frac{\sin( n \pi x)}{n \sin \pi x} \right|^p dx < \int_{-\infty}^\infty \left(e^{- \pi (n^2 - 1) x^2/2}\right)^p dx  = \sqrt{\frac{2}{p(n^2-1)}} .
\end{align}

The proof of the theorem uses Lemma \ref{lem-intro: transport}. Before moving to it, let us explain why \eqref{eq:discreteBall} is stronger than Ball's integral inequality and give a Corollary on discrete slicing.

Using the substitution $u = n x$, and $|\sin x| \leq |x|$, it holds
\begin{align*}
    \int_{-1/2}^{1/2} \left| \frac{\sin(n \pi x)}{n \sin(\pi x)} \right|^p dx = \int_{-n/2}^{n/2} \left| \frac{\sin(\pi x)}{n \sin(\pi x/n)} \right|^p \frac{dx}{n} \geq \int_{-n/2}^{n/2} \left| \frac{\sin(\pi x)}{\pi x} \right|^p \frac{dx}{n}
\end{align*}
Therefore, \eqref{eq:discreteBall} implies that
\begin{align*}
    \sqrt{\frac{n^2-1}{n^2}}\int_{-n/2}^{n/2} \left| \frac{\sin(\pi x)}{\pi x} \right|^p dx 
    \leq 
    \sqrt{\frac 2 p} 
    = 
    \int_{-\infty}^\infty e^{-p\pi x^2/2}dx,
\end{align*}
which yields Ball's inequality with $n \to \infty$.

\begin{cor}[Discrete slicing] \label{cor: discrete slicing}
For $k_i \in \mathbb{Z}$ and $1 \leq l_i \in \mathbb{Z}$, the rectangular subset of $\mathbb{Z}^n$, $L = \llbracket k_1, k_1 + l_1 -1\rrbracket \times \cdots \times \llbracket k_n, k_n + l_n -1 \rrbracket$ satisfies
\begin{align*}
    \# \left\{ z \in L : \sum_{i=1}^n z_i = k \right\} < \sqrt{2} \frac{\prod_{i=1}^n l_i}{\sqrt{\sum_{j=1}^n (l_j^2 -1)}}
\end{align*}
for any $k \in \mathbb{Z}$.
\end{cor}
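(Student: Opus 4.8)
The plan is to express the cardinality as a Fourier coefficient, bound it by a one-variable integral, and then feed that integral into Theorem~\ref{thm: discrete Ball's inequality} via the generalized H\"older inequality, after isolating a degenerate regime that has to be handled by hand.

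First I would record the elementary identity (writing $\mathbf{i}$ for the imaginary unit, to keep $i$ free as an index)
\[
N_k \coloneqq \#\left\{ z \in L : \sum_{i=1}^n z_i = k \right\} = \int_{-\frac12}^{\frac12} \prod_{i=1}^n\left( \sum_{j=0}^{l_i-1} e^{2\pi \mathbf{i}(k_i + j) t} \right) e^{-2\pi \mathbf{i} k t}\, dt ,
\]
which holds because expanding the product yields $\sum_{z\in L} e^{2\pi \mathbf{i}(\sum_i z_i) t}$ while $\int_{-1/2}^{1/2} e^{2\pi \mathbf{i}(m-k)t}\,dt = \mathbf 1_{\{m=k\}}$. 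Using $\sum_{j=0}^{l-1} e^{2\pi \mathbf{i} j t} = e^{\pi \mathbf{i}(l-1)t}\,\sin(l\pi t)/\sin(\pi t)$ and passing to moduli inside the integral,
\[
N_k \le \left(\prod_{i=1}^n l_i\right) \int_{-\frac12}^{\frac12} \prod_{i=1}^n \left| \frac{\sin(l_i\pi t)}{l_i \sin(\pi t)} \right|\, dt .
\]
Every factor has modulus at most $1$ and equals $1$ identically when $l_i = 1$, so only the indices in $I \coloneqq \{ i : l_i \ge 2\}$ matter; if $I = \varnothing$ then $L$ is a single point and the right-hand side of the corollary is $+\infty$, so I may assume $I \ne \varnothing$ and put $S \coloneqq \sum_{i\in I}(l_i^2-1) = \sum_{j=1}^n (l_j^2-1)$.

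Next I would split into two cases according to whether some index $i_0 \in I$ is \emph{dominant}, meaning $l_{i_0}^2 - 1 > \sum_{i\in I,\, i\ne i_0}(l_i^2-1)$, equivalently $S < 2(l_{i_0}^2 - 1)$. In the dominant case I would discard the integral bound entirely: since $L$ is a product set, $z_{i_0}$ is determined by the remaining coordinates, so $N_k \le \prod_{j\ne i_0} l_j = (\prod_i l_i)/l_{i_0}$, and $S < 2(l_{i_0}^2-1) < 2 l_{i_0}^2$ gives $(\prod_i l_i)/l_{i_0} < \sqrt2\,(\prod_i l_i)/\sqrt S$, which is the claim (note that $|I| = 1$, in particular $n=1$, always lands here). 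In the complementary ``balanced'' case no index is dominant, hence $2(l_i^2-1) \le S$ for every $i\in I$; setting $p_i \coloneqq S/(l_i^2-1) \ge 2$ makes $\sum_{i\in I} 1/p_i = 1$. Applying generalized H\"older with exponents $(p_i)_{i\in I}$ and then Theorem~\ref{thm: discrete Ball's inequality} to each factor (admissible since $p_i \ge 2$ and $l_i \ge 2$),
\[
\int_{-\frac12}^{\frac12} \prod_{i\in I}\left| \frac{\sin(l_i\pi t)}{l_i\sin(\pi t)} \right| dt \;\le\; \prod_{i\in I}\left( \int_{-\frac12}^{\frac12}\left|\frac{\sin(l_i\pi t)}{l_i\sin(\pi t)}\right|^{p_i}\! dt \right)^{1/p_i} \;<\; \prod_{i\in I}\left( \frac{2}{p_i(l_i^2-1)}\right)^{\!1/(2p_i)} \!= \sqrt{\tfrac2S},
\]
since $p_i(l_i^2-1) = S$ and $\sum_{i\in I} 1/p_i = 1$; hence $N_k < \sqrt2\,(\prod_i l_i)/\sqrt S$.

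The one genuinely delicate point is the case split: the exponents $p_i = S/(l_i^2-1)$ are the unique choice that collapses the H\"older product to the clean constant $\sqrt{2/S}$, and they are admissible precisely when no single $l_i$ overwhelms the others; the ``dominant'' regime is exactly where some $p_i < 2$ would occur, and there one must check that the crude projection bound still beats $\sqrt2(\prod_i l_i)/\sqrt S$, which works because dominance forces $S < 2l_{i_0}^2$ \emph{strictly}. Strictness of the final inequality is inherited from the strict inequality in Theorem~\ref{thm: discrete Ball's inequality} in the balanced case, and from the strict bound $S < 2l_{i_0}^2$ in the dominant case. Beyond this I expect only routine bookkeeping (reducing to $I$, the degenerate $I=\varnothing$, etc.).
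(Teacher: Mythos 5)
Your proposal is correct and follows essentially the same route as the paper: express the count as a Fourier coefficient (the paper phrases it probabilistically as $\mathbb{P}(X=k)$ via characteristic functions of independent uniforms, which is the same identity), bound it by the triangle inequality, split into a ``dominant index'' case handled by the trivial projection bound and a ``balanced'' case handled by generalized H\"older with the exponents $p_i = S/(l_i^2-1)$, then invoke Theorem~\ref{thm: discrete Ball's inequality}. Your explicit restriction to the index set $I = \{i : l_i \ge 2\}$ and treatment of $I=\varnothing$ is a small but welcome tidying: when some $l_j=1$ the paper's exponent $p_j = S/(l_j^2-1)$ is undefined and that factor must be dropped (it is identically $1$), which the paper leaves implicit.
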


\begin{proof}
Without loss of generality, let $k_i = 1$.  We will split the argument in two cases, for the first, suppose that there exists $l_{i'}$ such that 
\begin{align*}
    \sum_{j=1}^n (l_j^2 -1 ) < 2 (l_{i'}^2 - 1)
\end{align*}
In this case,
\begin{align*}
    \sqrt{2} \frac{\prod_{i=1}^n l_i}{\sqrt{\sum_{j=1}^n (l_j^2 -1)}}
        >
           \frac{ \prod_{j=1}^n l_j}{\sqrt{l_{i'}^2 - 1}}
        >
            \prod_{j \neq i'} l_j,
\end{align*}
which is clearly larger than $\# \left\{ z \in L : \sum_{i=1}^n z_i = k \right\}$, since for $m = \{m_j\}_{j \neq i'}$, $\{ z \in L : z_j = m_j \mbox{ for } j \neq i', \sum_l z_l = k \}$ has at most one element.

Now we assume $\sum_{j=1}^n (l_j^2 -1 ) \geq 2 (l_{j_o}^2 - 1)$ holds for all ${j_o}$.  Take $X_{j}$ to be independent and uniformly distributed on $\llbracket 1, l_{j} \rrbracket$ { and set $X = X_1 + \cdots + X_n$. Then, using the triangle inequality and then  H\"older's inequality with exponents $p_j$, $j=1,\dots,n$, satisfying $\sum \frac{1}{p_j} =1$, we get}
\begin{align}
   \frac{ \# \left\{ z \in L : \sum_{{j} =1 }^n z_{j} = k \right\} }{ \prod_{{j}=1}^n l_{j}}
        &=
            \mathbb{P}( X = k ) \nonumber \\
        & {  = \int_{-\frac 1 2}^{\frac 1 2}  \mathbb{E} e^{{2i\pi t (X-k)}}  dt
            }
            \nonumber
                \\
        &\leq
            \int_{-\frac 1 2}^{\frac 1 2} | \mathbb{E} e^{{2i\pi t (X-k)}} | dt \nonumber
                \\
        &\leq
            \prod_{{j}=1}^n \left( \int_{-\frac 1 2}^{\frac 1 2} |\mathbb{E} e^{{2i\pi} tX_{j}}|^{p_{j}} \right)^{\frac 1 {p_{j}}}. \label{eq: product of integrals}
\end{align}
Observing that {$|\mathbb{E} e^{{2i\pi} t X_j}| = \left|\frac{\sin( l_j \pi t)}{l_j \sin \pi t} \right|$}, we set $p_j = \frac{\sum_{i=1}^n {(l_{i}^2 - 1)}}{l_j^2 - 1} \geq 2$ and apply \eqref{eq:discreteBall} to \eqref{eq: product of integrals} to obtain
\begin{align*}
    \mathbb{P}(X = k) < \prod_{{j}=1}^n \left( \sqrt{\frac{2}{p_{j} (l_{j}^2-1)}} \right)^{\frac 1 {p_{j}}} = \sqrt{\frac{2}{\sum_{{i}=1}^n (l_{i}^2 -1)}},
\end{align*}
and the result follows.
\end{proof}

We note that equality can be obtained in following limit. Take $l_1 = l_2 = m$ and fixing $l_3, \dots, l_n =1$, then 
\begin{align*}
    \# \{ z \in L : \sum_{i=1}^n z_i = m + n-3\} = m
\end{align*}
while 
\begin{align*}
   \frac{ \prod_{j=1}^n l_j}{\sum_{i=1}^n (l_i^2 -1)} = \frac{m^2}{\sqrt{2(m^2-1)}}.
\end{align*}
Thus, the constant in Corollary \ref{cor: discrete slicing} cannot be improved, as
\begin{align*}
    \lim_{m \to \infty} \frac{\# \left\{ z \in L : \sum_{i=1}^n z_i = k \right\} }{\frac{\prod_{i=1}^n l_i}{\sqrt{\sum_{j=1}^n (l_j^2 -1)}}} = \sqrt{2}. 
\end{align*}

We now turn to the proof of Theorem \ref{thm: discrete Ball's inequality}.

\begin{proof}[Proof of Theorem \ref{thm: discrete Ball's inequality}]
The proof of Theorem \ref{thm: discrete Ball's inequality} relies on Lemma \ref{lem-intro: transport}.
Observe that the two relevant functions in \eqref{eq:discreteBall} do not have equal mass\footnote{{To see the first equality take a uniform random variable $X$ on $\{0,1,\dots,n-1\}$ and set $\varphi_X(t) = \mathbb{E} e^{2 \pi i X}$.  By Plancherel,
$\frac 1 n = \sum_{k=0}^{n-1} P(X=k)^2 = \int_0^1 | \varphi_X(t)|^2 dt = \int_{-1/2}^{1/2} | \varphi_X(t)|^2 dt$ by periodicity, while
$\varphi_X(t) 
=
\frac 1 n \sum_{k=0}^{n-1} \left(e^{2 \pi i t}\right)^k 
=
e^{\pi i t (n-1)} \frac{ \sin( \pi n t)}{n \sin(\pi t)}.
$}}:
\begin{align} \label{eq:hard}
\int_{-1/2}^{1/2} \left(\frac{\sin( n \pi x)}{n \sin(\pi x)} \right)^2 dx = \frac{1}{n}
\qquad 
\mbox{and}
\qquad 
\int_{-\infty}^{\infty} e^{-\pi(n^2 - 1) x^2} = \frac{1}{\sqrt{n^2-1}} .
\end{align}
Therefore, we define for $x \in [0,\infty)$,
\begin{align*}
    g(x) = \mathbbm{1}_{[0,\frac 1 2]}(x) \left(\frac{\sin(n \pi x)}{n \sin (\pi x)}\right)^2
    \qquad 
\mbox{and}
\qquad 
f(x) = \mathbbm{1}_{[0,A]}e^{-\pi (n^2-1) x^2},
\end{align*}
where $A$ is determined by the equation 
$$
\int_0^A e^{- \pi (n^2-1)x^2} dx = \frac{1}{2n} .
$$ 
Note that $\int_0^\infty e^{-\pi (n^2-1)x^2} dx = \frac 1 {2 \sqrt{n^2-1}} > \frac 1 {2n}$ so that $A$ is well defined. Further since $A$ is finite, upon completing the proof, we arrive at the strict inequality.  By construction $\int_0^\infty g(x)dx = \int_0^\infty f(x)dx=1/(2n)$.

Define $G: [0,1/2] \to [0, \frac 1 {2n}]$ as $G(x) = \int_0^x g(t) dt$ and $F: [0,A] \to [0,\frac{1}{2n}]$ by $F(x) = \int_0^x f(t) dt$. Put $T = F^{-1} \circ G$ that is, by construction, the pushforward of the measure with density $g$ onto that of density $f$, {which} satisfies the Monge-Amp\`ere equation $g=f(T)T'$ (observe that $T$ is increasing). Therefore, by Lemma \ref{lem-intro: transport}, the Theorem will follow if we can prove that $T' \leq 1$ which holds by Lemma \ref{lem:T'discrete}.
This achieves the proof of the Theorem.
\end{proof}

\begin{lem} \label{lem:T'discrete}
For all $x \in [0,1/2]$, $T'(x) \leq 1$.
\end{lem}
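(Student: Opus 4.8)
The plan is to reduce $T'(x)\le 1$, exactly as in the proof of Lemma \ref{lem:T'}, to an inequality between the two distribution functions $F$ and $G$. Since $T=F^{-1}\circ G$ satisfies the Monge-Amp\`ere equation $g=f(T)T'$, the claim $T'(x)\le 1$ for $x\in[0,1/2]$ is equivalent to $g(x)\le f(T(x))$. Because $f(y)=e^{-\pi(n^2-1)y^2}$ is decreasing on $[0,A]$ and $F$ is increasing, $g(x)\le f(T(x))=f(F^{-1}(G(x)))$ is in turn equivalent to $F^{-1}(G(x))\le f^{-1}(g(x))$, i.e. $G(x)\le F\bigl(f^{-1}(g(x))\bigr)$. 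Writing out $f^{-1}(y)=\sqrt{\tfrac{1}{\pi(n^2-1)}\log(1/y)}$ and recalling $f^{-1}$ only makes sense where $g(x)\le 1$ (which holds since $|\sin(n\pi x)|\le n|\sin\pi x|$), this becomes the concrete inequality
\begin{equation} \label{eq:discreteGF}
\int_0^x \left(\frac{\sin(n\pi u)}{n\sin\pi u}\right)^2 du \;\le\; \int_0^{\,\sqrt{\frac{1}{\pi(n^2-1)}\log\left(\left|\frac{n\sin\pi x}{\sin n\pi x}\right|^2\right)}} e^{-\pi(n^2-1)u^2}\,du, \qquad x\in(0,1/2].
\end{equation}

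The first step toward \eqref{eq:discreteGF} is to handle small $x$ by a pointwise comparison $g\le f$, mirroring the infinite-product argument in Lemma \ref{lem:T'}. Using $\frac{\sin(n\pi u)}{n\sin\pi u}=\prod_{k}\bigl(1-\tfrac{\sin^2\pi u}{\sin^2(k\pi/n)}\bigr)$-type product expansions, or more simply the elementary bound $\left(\frac{\sin n\pi u}{n\sin\pi u}\right)^2\le e^{-c(n)\,u^2}$ on an initial interval with $c(n)\ge\pi(n^2-1)$, one gets $g(u)\le f(u)$, hence $G(x)\le F(x)$, and since the upper limit in \eqref{eq:discreteGF} dominates $x$ on that range, the inequality holds there. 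I would expect the natural cutoff to be an interval like $(0,x_0]$ where $x_0$ is comparable to $1/n$, using that $\sin\pi u\ge$ (a linear lower bound) and the product formula for $\sin n\pi u/(n\sin\pi u)$ in terms of $\cos$ of its zeros.

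For the remaining range, say $x\in[x_0,1/2]$, the strategy is to bound the right-hand tail of the Gaussian from above via $\int_y^\infty e^{-\pi(n^2-1)u^2}\,du\le \frac{e^{-\pi(n^2-1)y^2}}{2\pi(n^2-1)y}$ (the same trick as in Lemma \ref{lem:T'}), which turns \eqref{eq:discreteGF} — after passing to complements and using $\int_0^{1/2}g=1/(2n)$, $\int_0^\infty f=1/(2\sqrt{n^2-1})$ — into needing a lower bound on $\int_x^{1/2}g(u)\,du$. That lower bound comes from summing the contributions of the humps of $\sin^2(n\pi u)$: on an interval of length $1/n$ around each integer multiple of $1/n$, $\int \sin^2(n\pi u)\,du=\tfrac{1}{2n}$ and $\frac{1}{n^2\sin^2\pi u}$ is controlled below by comparison with $\frac{1}{(\pi u)^2}$-type estimates up to absolute constants, giving $\int_x^{1/2}g\gtrsim \frac{1}{n}\cdot(\text{number of humps past }x)$, i.e. a bound of order $\tfrac{1}{n^2}\cdot\tfrac{1}{(1/2-x)}$ or, after summing a tail, something like $\frac{c}{n^2\lceil nx\rceil}$. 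Comparing this against the Gaussian tail bound reduces everything to a one-variable elementary inequality of the type $C\le y\log y$ for $y$ bounded below (with $y=|n\sin\pi x/\sin n\pi x|^{2}$ rescaled), analogous to the final "$9\pi\le y\log y$" step in Lemma \ref{lem:T'}, which is checked by monotonicity.

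The main obstacle I anticipate is getting the constants to line up uniformly in $n$: unlike the continuous case, here both the mass normalization ($1/(2n)$ vs. $1/(2\sqrt{n^2-1})$) and the Gaussian width $\pi(n^2-1)$ vary with $n$, and the discrete sum $\sum 1/(k+1)^2$ must be compared to the Gaussian tail with the correct $n$-dependence so that the resulting elementary inequality holds for \emph{every} $n\ge2$ (including the borderline small $n$, where one may have to check $n=2,3$ by hand). A secondary subtlety is choosing the crossover point $x_0$ between the "$g\le f$" regime and the tail-comparison regime so that both arguments remain valid there; I expect $x_0$ of order a fixed fraction of $1/n$ will work, but verifying the product-expansion bound $g(u)\le e^{-\pi(n^2-1)u^2}$ precisely on $(0,x_0]$ (rather than just asymptotically) is where the bookkeeping is heaviest.
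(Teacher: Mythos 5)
Your two–regime plan (pointwise $g\le f$ near $0$ via the product expansion, then a Gaussian-tail vs.\ $\int_x^{1/2}g$ comparison beyond) is indeed the skeleton of the paper's argument, and your anticipated obstacle about ``constants lining up'' is real, but you miss a more fundamental gap near $x=1/2$ that no amount of constant-tweaking will close.

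The reduction $T'\le 1\iff g(x)\le f(T(x))$ is correct, but passing to $G(x)\le F\bigl(f^{-1}(g(x))\bigr)$ is only legitimate when $g(x)$ lies in the range of $f$ restricted to $[0,A]$, i.e.\ when $g(x)\ge f(A)$ (your stated proviso ``$g(x)\le 1$'' is not the binding one). If $g(x)<f(A)$, then already $g(x)<f(A)\le f(T(x))$ because $T(x)\le A$ and $f$ is decreasing, so the claim is automatic and $f^{-1}(g(x))$ need never be formed. The paper isolates this by working only on the set $I=\{x:g(x)>f(A)\}$. This is not cosmetic: the tail inequality you want to prove, namely $\int_{f^{-1}(g(x))}^{A}f \le \int_x^{1/2} g$, is actually \emph{false} for $x$ close to $1/2$ when it is interpreted unconditionally. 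For odd $n$ one has $g(1/2)=1/n^2>0$, so the right-hand side tends to $0$ as $x\to 1/2$ while the left-hand side would stay bounded away from $0$ (and your heuristic $\int_x^{1/2}g\gtrsim c/(n^2\lceil nx\rceil)$, which does not tend to $0$ as $x\to 1/2$, cannot be correct there either). The way the paper escapes is a separate lemma (Lemma~\ref{lem:James}) showing $g(x)\le f(A)$ on $[1/2-1/n,1/2]$, which forces $I\cap[1/2-1/n,1/2]=\emptyset$; this requires its own analysis of $A$ via $\Erf^{-1}$ and explicit $\Erfc$ bounds (Lemma~\ref{lem: reform in terms of erfc}), and it is needed for \emph{every} $n\ge 3$, not just small $n$. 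Without this ingredient your middle-range argument would be tasked with proving something false on a nonempty set, so the proposal as written does not go through. (The remaining middle range $[1/n,1/2-1/n]$, where your tail strategy is the right one, is nonempty only for $n\ge 5$, and the paper does the $n=5$ and $n\ge 6$ cases with separate explicit constants --- that part of your outline is sound in spirit, if optimistic about uniformity.)
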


As a technical preparation, we observe that, 
\begin{equation} \label{eq:sinus}
\sin a \geq \frac{a}{b} \sin(b) , \qquad 0 < a \leq b \leq \pi/2, 
\end{equation}
a direct consequence of the fact that $x \mapsto \frac{\sin(x)}{x}$ is decreasing on $[0,\pi/2]$.

\begin{proof} [Proof of Lemma \ref{lem:T'discrete}]
We need to prove that  $g(x) \leq f(F^{-1}(G(x)))$ for any $x \in [0,1/2]$. Put 
$$
I:=\{x \in [0,1/2] : g(x) > f(A)\}.
$$
Then, for any $x \in [0,1/2] \setminus I$, it holds
$g(x) \leq f(A) \leq f(F^{-1}(G(x)))$. Therefore we only need to prove the desired inequality $g(x) \leq f(F^{-1}(G(x)))$ for $x \in I$. 

For $x \in I$, composing by $f^{-1}$, that is decreasing, and then by $F$, that is increasing, 
 $g(x) \leq f(F^{-1}(G(x)))$ is equivalent to $F( f^{-1} (g((x))) \geq G(x)$. Hence, we need to prove that
\begin{align} \label{eq:start}
    \int_0^x g(t) dt \leq \int_0^{f^{-1}(g(x))} f(t) dt,
\end{align}
for all $x \in I$.
To that aim, we need to distinguish between different regimes and to proceed to further successive reductions.

\bigskip

We first prove the inequality for $x \in [0,1/n] \cap I$.  For this we need only to prove $f(x) \geq g(x)$, as this will give $f^{-1}(g(x)) \geq x$ since $f$ is decreasing, and hence
\begin{align*}
    \int_0^x g(t) dt \leq \int_0^x f(t) dt \leq \int_0^{f^{-1}(g(x))} f(t) dt.
\end{align*}
The inequality  $g(x) \leq f(x)$ on $(0,1/n)$ is a direct consequence of the fact that, on $(0,1/n)$, $\frac{\sin(n \pi x)}{n \sin (\pi x)} \leq e^{-(n^2-1)\pi^2 x^2/6}$ that we now prove. By the product expansion for $\sin$,
\begin{align*}
    \frac{ \sin(n \pi x)}{n \sin(\pi x)} = \frac{ \frac{\sin(n \pi x)}{n \pi x}}{ \frac{\sin(\pi x)}{\pi x}} = \prod_{k=1}^\infty \frac{ 1- \left( \frac{nx}{k} \right)^2}{1- \left(\frac x k \right)^2},
\end{align*}
while the identity $\sum_{k=1}^\infty \frac 1 {k^2} = \frac{\pi^2}{6}$, gives 
\begin{align*}
    e^{-(n^2-1)\pi^2 x^2/6} = \prod_{k=1}^\infty e^{-\frac{(n^2-1) x^2}{k^2}}.
\end{align*}
Comparing term-wise, using
$
    e^{-\frac{(n^2-1) x^2}{k^2}} \geq  1- (n^2-1)\frac{x^2}{k^2},
$
and writing $ y = x^2 /k^2$ it suffices to prove
\begin{align}
    (1 - (n^2-1) y)(1-y) \geq 1-n^2 y
\end{align}
for $y \in (0, 1/n^2)$.  But this is equivalent to $(n^2-1)y^2 \geq0$ so the claim follows. 

Note that the above argument shows that \eqref{eq:start} holds for $n=2$ for all $x \in I$. 
We therefore deal in the sequel with $n \geq 3$. One key ingredient is the following lemma whose proof is postponed 
{ to} the end of the section.

\begin{lem} \label{lem:James}
For $n \geq 3$, and $x \in [1/2 - 1/n, 1/2] \cap (1/n, 1/2]$ it holds
\begin{align*}
    f(A) \geq g(x) .
\end{align*}
\end{lem}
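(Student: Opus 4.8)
The plan is to obtain an upper bound for $g(x)$ on the relevant range that is easy to compare against the lower bound $f(A) = e^{-\pi(n^2-1)A^2}$, after first pinning down $A$ quantitatively. First I would estimate $A$: since $\int_0^A e^{-\pi(n^2-1)x^2}dx = \frac{1}{2n}$ while $\int_0^\infty e^{-\pi(n^2-1)x^2}dx = \frac{1}{2\sqrt{n^2-1}}$, the ``missing tail'' $\int_A^\infty e^{-\pi(n^2-1)x^2}dx = \frac{1}{2\sqrt{n^2-1}} - \frac{1}{2n}$ is quite small (of order $n^{-3}$), so $A$ is of order $\sqrt{\log n}/\sqrt{n^2-1}$, and in particular $\pi(n^2-1)A^2$ grows like $\tfrac12\log n + O(\log\log n)$. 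Concretely, using the elementary tail bound $\int_A^\infty e^{-\pi(n^2-1)x^2}dx \le \frac{1}{2\pi(n^2-1)A}e^{-\pi(n^2-1)A^2}$ (as in the proof of Lemma \ref{lem:T'}), together with a matching lower bound on the tail, one extracts $f(A) = e^{-\pi(n^2-1)A^2} \gtrsim \frac{c}{n}$ for an explicit constant $c$ — it suffices to show $f(A) \ge$ some explicit quantity like $\frac{1}{2n^2}$ or $\frac{\pi}{n}\cdot$(something), the exact form to be chosen to beat the $g$-bound below.

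Next I would bound $g(x) = \left|\frac{\sin(n\pi x)}{n\sin\pi x}\right|^2$ from above on $x\in[1/2-1/n,1/2]\cap(1/n,1/2]$. On this interval $\sin\pi x$ is close to its maximum $1$: indeed for $x\ge 1/2-1/n$ we have $\pi x \ge \pi/2 - \pi/n$, so $\sin\pi x \ge \sin(\pi/2-\pi/n) = \cos(\pi/n) \ge 1 - \frac{\pi^2}{2n^2}$, which for $n\ge 3$ is bounded below by an explicit constant (e.g.\ $\cos(\pi/3)=1/2$, and better for larger $n$). Hence $g(x) \le \frac{1}{n^2\sin^2\pi x} \le \frac{1}{n^2\cos^2(\pi/n)} \le \frac{C}{n^2}$ for an explicit $C$ (one may also use $|\sin(n\pi x)|\le 1$ trivially, or sharpen via $|\sin(n\pi x)|$ being small near the relevant points if needed, but the crude bound should already suffice). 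Then Lemma \ref{lem:James} reduces to the inequality $\frac{C}{n^2} \le f(A)$, i.e.\ to the lower bound on $f(A)$ of order $1/n$ obtained in the first step — which holds with room to spare for $n\ge 3$, possibly after checking the first few values $n=3,4,\dots$ by hand if the asymptotic estimate is not yet tight enough there.

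The main obstacle I anticipate is making the lower bound on $f(A)$ genuinely explicit and uniform in $n\ge 3$: the defining relation for $A$ is implicit, so one must carefully sandwich the Gaussian tail $\int_A^\infty e^{-\pi(n^2-1)x^2}dx$ between two explicit expressions in $A$ and then invert. The cleanest route is probably to substitute $y = \sqrt{\pi(n^2-1)}\,x$ so that $\int_{A'}^\infty e^{-y^2}dy = \sqrt{\pi}\big(\tfrac12 - \tfrac{\sqrt{n^2-1}}{2n}\big) = \frac{\sqrt\pi}{2}\cdot\frac{1}{n(n+\sqrt{n^2-1})} =: \varepsilon_n$, reducing everything to the standard fact that if $\int_{A'}^\infty e^{-y^2}dy = \varepsilon_n$ then $e^{-A'^2}$ is controlled from below by a constant times $\varepsilon_n$ (again via $\frac{e^{-A'^2}}{2A'}\ge \varepsilon_n \ge \frac{e^{-A'^2}}{2A'}\cdot\frac{A'^2}{A'^2+1/2}$-type bounds), and $f(A) = e^{-A'^2}$. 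Since $\varepsilon_n$ is of order $n^{-2}$, this yields $f(A) \gtrsim n^{-2}\sqrt{\log n}$ up to constants, comfortably above the $O(n^{-2})$ bound on $g$; the only care needed is a clean treatment of the slowly varying $\sqrt{\log n}$ factor, which can be absorbed by noting $A' \le \sqrt{\log(1/\varepsilon_n)} \le \sqrt{2\log n + O(1)}$ and plugging back in.
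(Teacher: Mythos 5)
Your overall strategy — an upper bound on $g$ over the given interval, a lower bound on $f(A)$ extracted from the defining relation for $A$, then a comparison — is exactly the shape of the paper's argument. The paper's technical device, however, is Lemma \ref{lem: reform in terms of erfc}: rather than sandwiching $f(A)$ directly, it rewrites $f(A)\geq g^*$ as the inequality $\Erfc\bigl(\sqrt{\log(1/g^*)}\bigr) \leq 1 - \sqrt{1-1/n^2}$ and then applies explicit upper bounds for $\Erfc$ (the ``engineering'' bound $\Erfc(x)\leq \tfrac{e^{-x^2}}{6}+\tfrac{e^{-4x^2/3}}{2}$, and for $n=5$ a three-term integration-by-parts asymptotic). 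This avoids ever having to produce a two-sided bound on $A$ itself.

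The genuine gap in your proposal is that the ``crude bound should already suffice'' claim is false for small $n$. Your bound uses $|\sin(n\pi x)| \leq 1$ and $\sin\pi x \geq \cos(\pi/n)$, giving $g^*\leq \tfrac{1}{n^2\cos^2(\pi/n)}$. But decoupling numerator and denominator in this way is too lossy: the maximum of $|\sin(n\pi x)|$ and the minimum of $\sin\pi x$ on the interval are not attained at the same point. Concretely, for $n=4$ this crude bound gives $g^*\leq \tfrac{1}{16\cdot(1/2)} = 0.125$, while a direct calculation of $A$ from $\int_0^A e^{-15\pi x^2}\,dx = 1/8$ gives $f(A)\approx 0.09$ — so the comparison fails, even though the lemma itself holds (the true maximum is $g(x_0)=2/27\approx 0.074$, found by differentiation). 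For $n=3$ the crude bound $4/9$ is far above $f(A)\approx 0.16$; you would need to exploit the intersection with $(1/n,1/2]$, which tightens the $\sin\pi x$ bound to $\sin(\pi/3)=\sqrt 3/2$, but your written bound $\cos(\pi/n)$ does not do this. The paper is therefore forced to treat $n=3,4,5$ by computing $\max g$ exactly (via $g'$), and only uses the crude bound $\tfrac{1}{n^2\cos^2(\pi/n)}$ for $n\geq 6$, where it is monotone and can be checked at $n=6$. Your closing remark that ``the first few values may need checking by hand if the asymptotic estimate is not tight enough'' understates this: it is not a tightness issue in the $f(A)$ estimate but a structural insufficiency of the $g$ bound.

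A secondary issue: the first paragraph's claim $f(A)\gtrsim c/n$ is off by a factor of $n$. The correct order, which you recover in the third paragraph, is $f(A)\asymp \sqrt{\log n}/n^2$; since your bound on $g^*$ is also $\Theta(n^{-2})$, the margin is only a $\sqrt{\log n}$ factor, and the constants are genuinely tight near $n=5,6$ (for $n=6$, $g^*$ crude bound $\approx 0.037$ versus $f(A)\approx 0.048$). So the sandwich argument for $A$ needs real care, which is precisely what Lemma \ref{lem: reform in terms of erfc} is engineered to provide cleanly.
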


This lemma guarantees that, for $n=3$, $I \cap {(1/3,1/2]}=\emptyset$ and therefore \eqref{eq:start} is proved for $n=3$ and we can assume that $n \geq 4$.
The lemma also guarantees that, for any $n \geq 4$, 
$I \cap [1/2-1/n,1/2] = \emptyset$. Therefore, it only remains to prove
\eqref{eq:start} for $n \geq 4$ in the intermediate regime
$[1/n,1/2-1/n] \cap I$. Observe that $[1/n,1/2-1/n] = \{\frac{1}{4} \}$ for $n=4$ so that 
\eqref{eq:start} holds for $n=4$ also, by continuity and we are left with the regime  $[1/n,1/2-1/n] \cap I$ for $n \geq 5$.

Assume that $n \geq 5$ and consider the regime $[1/n,1/2-1/n] \cap I$.
Since by construction 
$$
\int_0^{1/2} g(t)dt = \int_0^A f(t)dt = \frac{1}{2n}
$$
Inequality \eqref{eq:start} in the studied regime is equivalent to the tail inequality
\begin{equation} \label{eq:forCyril}
\int_{f^{-1}(g(x))}^A f(t) dt 
\leq 
\int_x^{1/2} g(t) dt , \qquad x \in [1/n,1/2-1/n] .
\end{equation}

Observe that
\begin{align*}
    \int_{f^{-1}(g(x))}^A f(t) dt 
    & = 
    \int_{f^{-1}(g(x))}^A \frac{2\pi(n^2-1)t}{2\pi(n^2-1)t} e^{-\pi(n^2-1)t^2} dt \nonumber \\
    & \leq
    \frac{1}{2 \pi(n^2-1) f^{-1}(g(x))} \int_{f^{-1}(g(x))}^A 2\pi(n^2-1)t e^{-\pi(n^2-1)t^2} dt \nonumber \\
    & =
    \frac{g(x) - f(A)}{2 \pi(n^2-1) f^{-1}(g(x))} 
    \\
    & \leq 
    \frac{g(x)}{2 \pi(n^2-1) f^{-1}(g(x))}. \nonumber 
\end{align*}
Therefore, using the explicit expression for $f^{-1}(y)=\sqrt{\log(1/y)/(\pi(n^2-1))}$, \eqref{eq:forCyril}  would follow from
\begin{equation} \label{eq:pain}
\frac{g(x)}{2\sqrt{\pi(n^2-1) \log(1/g(x))}} 
\leq 
\int_x^{1/2} g(t)dt .
\end{equation}

In order to bound from below $1/g(x)$, we need to distinguish between $n=5$ and $n \geq 6$.

Let us first consider the case $n \geq 6$. We claim that, for $x  \in [1/n,1/2]$, $\frac{1}{g(x)} \geq 14$ (with the convention that $1/g(x)=\infty$ when $g(x)=0$).
Now fix $\theta \in (0,1/2)$.  If $\frac{1}{n} \leq x \leq \frac{1+\theta}{n}$, we have for $n \geq 6$
\begin{align*}
\frac{1}{g(x)} 
& = 
\frac{n^2 \sin(\pi x)^2}{\sin(n \pi x)^2}  \\
& \geq 
n^2 \frac{\sin(\pi/n)^2}{\sin(\theta \pi)^2} \qquad (\mbox{since } x \mapsto \sin^2 (x) \mbox{ is increasing on } (0, \pi/2) \cup (\pi,3\pi/2))\\
& \geq 
\frac{36 \sin(\pi/6)^2}{\sin(\theta \pi)^2}\qquad (\mbox{by } \eqref{eq:sinus} \mbox{ with } a=\frac{\pi}{n} \mbox{ and } b= \frac{\pi}{6})\\
& = 
\frac{9}{\sin(\theta \pi)^2} 
\end{align*}
Similarly, for $x  \in [(1+\theta)/n,1/2]$,  it holds
\begin{align*}
\frac{1}{g(x)} 
& = 
\frac{n^2 \sin(\pi x)^2}{\sin(n \pi x)^2} 
\geq 
n^2 \sin^2\left(\frac{(1+\theta)\pi}{n} \right) 
\geq 
36 \sin^2\left(\frac{(1+\theta)\pi}{6} \right) .
\end{align*}
Therefore,
$$
\frac{1}{g(x)}  \geq 9 \max_{\theta \in (0,1/2)} \min \left( \frac{1}{\sin(\theta \pi)^2} , 4 \sin^2\left(\frac{(1+\theta)\pi}{6} \right)  \right) .
$$
For $\theta = 0.295$, we obtain $\min \left( \frac{1}{\sin(\theta \pi)^2} , 4 \sin^2\left(\frac{(1+\theta)\pi}{6} \right)  \right) = \frac{1}{\sin(\theta \pi)^2} \simeq 1.56$ from which we deduce that $1/g(x) \geq 14$ as announced, proving the claim.

It follows that, for any $n \geq 6$,
$$
2\sqrt{\pi(n^2-1)\log(1/g(x))} 
= 
2\sqrt{\pi\log(1/g(x))} \sqrt{\frac{n^2-1}{n^2}} n 
\geq  
2\sqrt{\pi \log(14)} \sqrt{\frac{35}{36}} n 
\geq 
5.67 n .
$$

In turn \eqref{eq:forCyril} would follow from
$$
g(x) \leq 5.67 n \int_x^{1/2} g(t)dt .
$$
Our aim is now to bound from below the right hand side of the latter. 
Using \eqref{eq:sinus} and a change of variables, it holds
\begin{align*}
    \int_x^{1/2} g(t)dt 
    & = \frac{1}{n^2 \sin(\pi x)^2} \int_x^{1/2} \left( \frac{\sin(\pi x)}{\sin(\pi t)}\right)^2 \sin(n\pi t)^2 dt \\
    & \geq 
    \frac{1}{n^2 \sin(\pi x)^2} \int_x^{1/2} \left( \frac{x}{t}\right)^2 \sin(n\pi t)^2 dt \\
    & =
    \frac{\pi x^2}{n \sin(\pi x)^2} \int_{n \pi x}^{n\pi/2} \left( \frac{\sin s}{s}\right)^2  ds.
\end{align*}
Therefore, 
the inequality $g(x) \leq 5.67 n \int_x^{1/2} g(t)dt $ would be a consequence of
$$
\sin(n\pi x)^2 \leq 5.67  n^2 \pi x^2 \int_{n \pi x}^{n\pi/2} \left( \frac{\sin s}{s}\right)^2  ds .
$$
Set $y\coloneqq n\pi x$. We need to prove that
$$
\pi \sin(y)^2 \leq 5.67  y^2  \int_{y}^{n\pi/2} \left( \frac{\sin s}{s}\right)^2  ds 
$$
holds for all $y \in [\pi, \frac{n\pi}{2}-\pi]$. This would be a consequence of
$$
\pi \sin(y)^2 \leq 5.67  y^2  \int_{y}^{y+\pi} \left( \frac{\sin s}{s}\right)^2  ds , \qquad y \geq \pi .
$$
Now, observe that
$$
\int_{y}^{y+\pi} \left( \frac{\sin s}{s}\right)^2  ds \geq \frac{1}{(y+\pi)^2}\int_y^{y+\pi} \sin^2 s \; ds 
=
\frac{\pi}{2(y+\pi)^2}   .
$$
Therefore, it suffices to prove that, for any $y \geq \pi$, it holds $2  (y+\pi)^2 \sin^2 y \leq 5.67  y^2$, which is a consequence of
$(y+\pi) |\sin y| \leq 1.68 y$ proved  in Lemma \ref{lem:final} below. 

As an intermediate conclusion, we established  \eqref{eq:start} for all $n$ except $n=5$ and we are left with proving \eqref{eq:forCyril} only in the regime $[1/5, 1/2-1/5]=[1/5,3/10]$.
Our starting point is Inequality \eqref{eq:pain}. As for the case $n \geq 6$, we need to bound from below $1/g(x)$ and $\int_x^{1/2}g(t)dt$. Using similar arguments, we have for $\theta \in (0,1/2)$ and $x \in [1/5, (1+\theta)/5]$,
\begin{align*}
\frac{1}{g(x)}  = 
\frac{25\sin(\pi x)^2}{\sin(5 \pi x)^2}  
 \geq 
\frac{25 \sin(\pi/5)^2}{\sin(\theta \pi)^2} 
\end{align*}
and for $x  \in [(1+\theta)/5,3/10]$, 
\begin{align*}
\frac{1}{g(x)} 
& = 
\frac{25 \sin(\pi x)^2}{\sin(5 \pi x)^2} 
\geq 
25 \sin^2\left(\frac{(1+\theta)\pi}{5} \right) 
 .
\end{align*}
Therefore,
$$
\frac{1}{g(x)}  \geq 25 \max_{\theta \in (0,1/2)} \min \left( \frac{\sin(\pi/5)^2}{\sin(\theta \pi)^2} ,  \sin^2\left(\frac{(1+\theta)\pi}{5} \right)  \right) .
$$
For $\theta = 0.299$, the above minimum equals $\frac{\sin(\pi/5)^2}{\sin(\theta \pi)^2}$ from which we deduce that $1/g(x) \geq 13.25$.

It follows that
$$
\frac{g(x)}{2\sqrt{\pi(n^2-1)\log(1/g(x))}} 
\leq 
\frac{\sin(5 \pi x)^2}{\sin(\pi x)^2}
\frac{1}{25 \times 2\sqrt{24 \pi \log(13.25)}} 
\leq 
 \frac{\sin(5 \pi x)^2}{25 \times 27.9 \sin(\pi x)^2} .
$$
On the other hand,
$$
\int_x^{1/2} g(t) dt 
\geq 
\frac{1}{25} \int_x^{x+\frac{1}{5}} \sin(5 \pi t)^2 dt
=
\frac{1}{250} .
$$
Therefore, \eqref{eq:pain} would be a consequence of
$\sin(5 \pi x)^2 \leq 2.79 \sin(\pi x)^2$, $x \in [1/5
,3/10]$. Setting $y=\pi x$ and taking the root (note that $5y \in [\pi, 3\pi/2]$), it is enough to prove
$$
-\sin(5 y) \leq 1.67 \sin y , \qquad y \in [\pi/5, 3\pi/10] .
$$
As a last reduction, we set $z=\sin(y)$ and use that $\sin(5y)=16z^5-20z^3+5z$ so that the latter can be recast as $16z^4-20z^2+6.67 \geq 0$
for all $z \in [\sin(\pi/5), \sin(3\pi/10)]$. But the second order polynomial $16X^2-20X+6.67$ is always positive since its discriminant is negative, ending the proof of the lemma provided we can prove
Lemma \ref{lem:James} and Lemma \ref{lem:final}.
\end{proof}

In the proof of Lemma \ref{lem:T'discrete} we used the following lemma.

\begin{lem} \label{lem:final}
For all $y \geq \pi$, it holds 
\begin{equation} \label{eq:final}
 (y+\pi) |\sin y| \leq 1.68 y. 
\end{equation} 
\end{lem}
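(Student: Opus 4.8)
The plan is to reduce \eqref{eq:final} to a single polynomial inequality after peeling off a trivial range. Rewrite the claim as $|\sin y|\le 1.68\,\frac{y}{y+\pi}$ and note that $y\mapsto\frac{y}{y+\pi}$ is increasing. Since $1.68=\frac{42}{25}$ and $0.68=\frac{17}{25}$, one has $1.68\,\frac{y}{y+\pi}\ge 1$ exactly when $0.68\,y\ge\pi$, i.e. $y\ge\frac{25\pi}{17}$, with equality of $1.68\,\frac{y}{y+\pi}$ to $\frac{42}{25}\cdot\frac{25}{42}=1$ at $y=\frac{25\pi}{17}$. Hence for $y\ge\frac{25\pi}{17}$ the inequality follows at once from $|\sin y|\le 1$, and it remains to handle the compact interval $y\in[\pi,\frac{25\pi}{17}]$.

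On that interval I would substitute $y=\pi+t$ with $t\in[0,\frac{8\pi}{17}]$ (note $\frac{25\pi}{17}-\pi=\frac{8\pi}{17}$), so that $|\sin y|=\sin t\ge 0$ and the target becomes $(2\pi+t)\sin t\le 1.68(\pi+t)$; then set $t=\frac\pi2-u$ with $u\in[\frac{\pi}{34},\frac\pi2]$, which turns it into $\big(\tfrac{5\pi}{2}-u\big)\cos u\le 1.68\big(\tfrac{3\pi}{2}-u\big)$. Next I would use the bound $\cos u\le 1-\tfrac{u^2}{2}+\tfrac{u^4}{24}$, valid on $[0,\tfrac\pi2]$ because the remaining tail $\tfrac{u^6}{720}-\tfrac{u^8}{8!}+\cdots$ is an alternating series whose terms decrease in modulus (consecutive ratio $\le u^2/56<1$), hence is $\ge 0$. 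Since $\tfrac{5\pi}{2}-u>0$, it suffices to prove the polynomial inequality
\[
   P(u):=\frac{\pi}{50}-\frac{17}{25}u+\frac{5\pi}{4}u^2-\frac12u^3-\frac{5\pi}{48}u^4+\frac{1}{24}u^5\ \ge\ 0,\qquad u\in\Big[0,\tfrac\pi2\Big],
\]
where the constant $\tfrac{\pi}{50}=2.52\pi-\tfrac{5\pi}{2}$ and $\tfrac{17}{25}=1.68-1$ arise from the expansion.

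To dispatch $P$, I would drop the nonnegative term $\tfrac{1}{24}u^5$ and, using $u\le\tfrac\pi2$ to replace $u^3\le\tfrac\pi2u^2$ and $u^4\le\tfrac{\pi^2}{4}u^2$, bound $P(u)$ from below by the quadratic $\tfrac{\pi}{50}-\tfrac{17}{25}u+\big(\pi-\tfrac{5\pi^3}{192}\big)u^2$; its leading coefficient $\pi-\tfrac{5\pi^3}{192}\approx 2.33$ is positive and its discriminant $\big(\tfrac{17}{25}\big)^2-4\cdot\tfrac{\pi}{50}\cdot\big(\pi-\tfrac{5\pi^3}{192}\big)\approx-0.12$ is negative, so this quadratic — and therefore $P$ — is strictly positive on $[0,\tfrac\pi2]$ (indeed on all of $[0,\infty)$), which closes the proof.

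The one genuine subtlety is that \eqref{eq:final} is nearly sharp: the slack in $(2\pi+t)\sin t\le 1.68(\pi+t)$ shrinks to about $0.03$ near $t\approx\tfrac{8\pi}{17}$ (equivalently $y$ just below $\tfrac{3\pi}{2}$), so a crude estimate such as $\sin t\le 1$ or $\sin t\le t$ alone cannot close the middle range; the value of first isolating the trivial tail $y\ge\tfrac{25\pi}{17}$ is precisely that the degree-$4$ bound on $\cos u$, whose error is $\lesssim u^6/720$ and hence far below the remaining margin, then does the job. I expect the only place requiring care is making these last numerical comparisons (leading coefficient positive, discriminant negative) cleanly, which is routine.
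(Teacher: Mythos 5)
Your proof is correct and takes a genuinely different route from the paper's. After peeling off the trivial range $y\ge 25\pi/17$ (where $1.68\,y/(y+\pi)\ge 1$), you replace $\cos u$ by its degree-$4$ Taylor upper bound, reduce to the quintic $P(u)\ge 0$, and close by bounding $P$ below by a quadratic with positive leading coefficient $\pi-\tfrac{5\pi^3}{192}\approx 2.33$ and negative discriminant $\approx -0.12$; I checked all the substitutions, the sign of the Taylor remainder via the alternating-series criterion, the expansion giving $P(u)$, and the two numerical comparisons, and everything is sound. The paper instead works on $[\pi,3\pi/2]$ with the auxiliary function $H(y)=1.68y+(y+\pi)\sin y$: after computing $H'$, $H''$, $H'''$ it establishes that $H$ has a unique interior minimum $y_1$, uses the equation $H'(y_1)=0$ to eliminate the product $(y_1+\pi)\sin y_1$, and then verifies $H(y_1)\ge 0$ through a monotonicity argument for a second auxiliary function $G$ together with a numerical probe at $y^*=4.6244$. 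Your version trades that calculus bookkeeping for explicit polynomial algebra; it is arguably more self-contained (the only ``numerical'' step is a discriminant sign), while the paper's route is shorter to state but relies on locating a critical point numerically. Both correctly exploit the observation you flag, that the inequality is nearly tight near $y\approx 3\pi/2$, so that a crude bound like $|\sin y|\le 1$ cannot work on the whole middle range.
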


\begin{proof}
For $y \geq 3\pi/2$, we have
$$
\frac{y+\pi}{y}  \leq \frac{5}{3}  \leq 1.68 .
$$
from which \eqref{eq:final} follows. Therefore, it remains to prove \eqref{eq:final} for $\pi \leq y \leq 3\pi/2$. 
Let 
$$
H(y)\coloneqq 1.68 y - (y+\pi)|\sin y| = 1.68 y + (y+\pi)\sin y,  \qquad y \in (\pi,3\pi/2) .
$$
The  successive derivatives are
$$
H'(y) = 1.68 + \sin y + (y + \pi) \cos y, \qquad
H''(y)  = 2 \cos y - (y+\pi)\sin y 
$$
and
$$
H'''(y) = -3 \sin y -(y+\pi) \cos y .
$$
Notice that, for $y \in (\pi,3\pi/2)$, $\cos y, \sin y \leq 0$ so that $H''' > 0$ and $H''$ is increasing.
Since $H''(\pi)=-2$ and $H''(3\pi/2)=5\pi/2$, $H''$ changes sign from $-$ to $+$ at a unique point $y_o$.
Therefore $H'$ is decreasing on $(\pi,y_o)$ and increasing on $(y_o,3\pi/2)$. Since 
$H'(\pi)=-0.32$ and $H'(3\pi/2)=0.68$ we can conclude that $H'$ changes sign from $-$ to $+$ at a point $y_1 > y_o$ and that $H$ has a unique minimum at $y_1$. Therefore, the thesis will follow if we can prove that $H(y_1) \geq 0$.

Now by construction $H'(y_1)=0$. Hence
$$
1.68 \sin y_1 + \sin^2 y_1 + (y_1 + \pi) \sin y_1 \cos y_1 = 0
$$
from which we deduce that
$$
(y_1 + \pi) \sin y_1 = -\frac{1}{\cos y_1} \left( 1.68 \sin y_1 + \sin^2 y_1  \right)
$$
(note that $\pi < y_1 < 3\pi/2$ so that $\cos y_1 \neq 0$). In turn
$$
H(y_1) = \frac{1}{\cos y_1} \left( 1.68y_1 \cos y_1 - 1.68 \sin y_1 - \sin^2 y_1  \right)
$$
Since $\cos y_1 <0$, $H(y_1) \geq 0$ amounts to proving that
$G(y) \coloneqq 1.68y \cos y - 1.68 \sin y - \sin^2 y$, $y \in (\pi,3\pi/2)$, satisfies
$G(y_1) \leq 0$. 

Since $G'(y)=-\sin y(1.68 y + 2 \cos y)$, and since $1.68 y + 2 \cos y \geq 0$ on $(\pi,3\pi/2)$
(a consequence of the fact that $y \mapsto 1.68 y + 2 \cos y$ is increasing on $(\pi,3\pi/2)$), $G$ is increasing. Therefore, to prove that $G(y_1) \leq 0$, it is enough to find $y^* \geq y_1$ with $G(y^*) \leq 0$.

To that aim, take $y^*:=4.6244$. Observe that $H'(y^*) \simeq 0.0014$ so that, since $H'$  changes sign from $-$ to $+$ and $H'(y_1)=0$, necessarily $y^* \geq y_1$. Since  $G(y^*) \simeq -0.0015$ the lemma is proved.
\end{proof}

Next we prove Lemma \ref{lem:James}.

\begin{proof}[Proof of lemma \ref{lem:James}]
We distinguish  different cases.
\begin{itemize}
\item{$n \geq 6$.}
For $x \in [1/2- 1/n, 1/2]$, $g(x) = \frac{\sin^2(n \pi x)}{n^2 \sin^2(\pi x)} \leq \frac{1}{n^2 \sin^2(\pi(1/2 - 1/n))} = \frac{1}{n^2 \cos^2(\pi/n)}$.  Thus by Lemma \ref{lem: reform in terms of erfc} {below} it suffices to prove
\begin{align*}
    \frac{1}{6} \frac 1 {n^2 \cos^2(\pi/n)} + \frac{1}{2} \left(\frac 1 { n^2 \cos^2(\pi /n)}\right)^{\frac 4 3} \leq \frac{1}{2n^2},
\end{align*}
or, rearranging,
$$
 \frac{1}{6} + \frac{1}{2} \frac 1 { \left( n \cos(\pi /n)\right)^{\frac 2 3}}
\leq \frac{1}{2} \cos^2(\pi/n) .
$$
Note that the left hand side is decreasing in $n$, while the right hand side is increasing in $n$, so to prove the result for all $n \geq 6$, one needs only check the $n = 6$ case, which can be evaluated exactly,
$$
\frac{1}{6} + \frac{1}{2} \frac 1 { \left( 6 \cos(\pi /6)\right)^{\frac 2 3}} = \frac{1}{6} + \frac{1}{6} = \frac{1}{3}\leq \frac{3}{8} = \frac{1}{2} \cos^2(\pi/6) .
$$

    
    
    \item{$n=3$.}
    When $n=3$, a derivative computation shows that the maximum of $g(x)$ on $[1/3,1/2]$ occurs when $x = 1/2$, with $g(1/2) = \frac 1 9$.  Thus to finish this case we need only prove $f(A) \leq \frac 1 9$ or by Lemma \ref{lem: reform in terms of erfc}
    \begin{align*}
        \frac{1}{6 \times 9} + \frac{1}{2 \times 9^{4/3}} { < } 0.046 { < }  0.55 { < } \frac{1}{2 \times 3^2} .
    \end{align*}

    \item{$n =4$.}
    In this case $g'(x) = - \frac 1 2 \pi \sin(4 \pi x) \left( 3 \cos(2 \pi x) + 2 \right)$ shows that $g$ takes its maximum on $[1/4,1/2]$ at $x_0 = { {\arccos(-2/3)}/{(2\pi)}}$.  Applying half and double angle formulas for $\sin$ and $\sin(\arccos(x)) = \sqrt{ 1- x^2}$ yields $g(x_0) = \frac 2 {27}.$  Similarly to prove that $f(A) \leq g(x)$ on $(1/4,1/2)$ by Lemma \ref{lem: reform in terms of erfc} we need to check
    \begin{align*}
        \frac{2}{6 \times 27} + \frac{2^{4/3}}{2 \times 27^{4/3}}  \leq \frac{1}{2 \times 4^2} .
    \end{align*}
    This can be checked by hand or by numerical approximation where $\frac{2}{6 \times 27} + \frac{2^{4/3}}{2 \times 27^{4/3}} { < }  0.028$ and $\frac{1}{2 \times 4^2} { = 0.03125}$.
    
\item{$n=5$.}
In this case, the maximum of $g$ on $[2/5,1/2]$ on occurs at $x_0 = 1/2$, where $g(1/2) = 1/25$.  On $[1/5,2/5]$, $g$ takes its maximum at $x_0 = \frac 2 \pi \arctan\left( \sqrt{ \frac{11-4\sqrt{6}}{5}}\right)$, with $g(x_0) = \frac 1 {16}$. Thus  by Lemma \ref{lem: reform in terms of erfc} we need only check 
$$
\frac{1}{\sqrt{\pi} 16 \sqrt{\log(16)}} \left( 1 - \frac{1}{2 \log(16)} + \frac{3}{4\log(16)^2}\right) \leq \frac{1}{2 \times 5^2}
$$
The result follows since the left hand side approximately equals $0.019$
and the right hand side equals $0.02$.
\end{itemize}
This achieves the proof.
\end{proof}

\begin{lem} \label{lem: reform in terms of erfc}
Given $I \subset [0,1/2]$, set $g^* \coloneqq \sup_{x \in I} g(x)$. Then,
to prove $f(A) \geq g(x)$ on $I$, it suffices to prove
\begin{align} \label{eq:g*}
    \frac{ g^*}{6} + \frac{g^{*4/3}}{2}
    \leq \frac{1}{2n^2} 
\end{align}
provided $g^* \leq e^{-1/4}$, or to prove
\begin{equation} \label{eq:g*2}
\frac{g^*}{\sqrt{\pi} \sqrt{\log(1/g^*)}} \left( 1 - \frac{1}{2 \log(1/g^*)} + \frac{3}{4\log(1/g^*)^2}\right) \leq \frac{1}{2n^2}  .
\end{equation}
\end{lem}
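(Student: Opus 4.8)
The plan is to translate $f(A)\ge g(x)$ into a comparison of two Gaussian tails, reduce it to a one–variable estimate for $\tfrac2{\sqrt\pi}\int_b^\infty e^{-u^2}du$, and then bound that tail in two different ways, one matching each criterion.

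First I would observe that $f(A)\ge g(x)$ for every $x\in I$ follows from $f(A)\ge g^\ast$, so it suffices to prove the latter under either hypothesis. Put $b:=\sqrt{\log(1/g^\ast)}$, so $g^\ast=e^{-b^2}$; by the formula $f^{-1}(y)=\sqrt{\log(1/y)/(\pi(n^2-1))}$ recorded in the proof of Lemma~\ref{lem:T'discrete}, $f^{-1}(g^\ast)=b/\sqrt{\pi(n^2-1)}$, and since $f(A)=e^{-\pi(n^2-1)A^2}$ the inequality $f(A)\ge g^\ast$ is equivalent to $a\le b$ with $a:=\sqrt{\pi(n^2-1)}\,A$. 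Changing variables $u=\sqrt{\pi(n^2-1)}\,x$ in the defining relation $\int_0^A e^{-\pi(n^2-1)x^2}dx=\tfrac1{2n}$ gives $\int_0^a e^{-u^2}du=\tfrac{\sqrt\pi}{2}\sqrt{1-1/n^2}$, hence $\int_a^\infty e^{-u^2}du=\tfrac{\sqrt\pi}{2}\bigl(1-\sqrt{1-1/n^2}\bigr)$. As $\lambda\mapsto\int_\lambda^\infty e^{-u^2}du$ is strictly decreasing, $a\le b$ is equivalent to $\int_b^\infty e^{-u^2}du\le\tfrac{\sqrt\pi}{2}\bigl(1-\sqrt{1-1/n^2}\bigr)$, and using $\sqrt{1-t}\le1-t/2$ with $t=1/n^2$ it suffices to prove
\[
    \frac2{\sqrt\pi}\int_b^\infty e^{-u^2}\,du\ \le\ \frac1{2n^2}.
\]

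For criterion \eqref{eq:g*2} I would bound the tail by repeated integration by parts: writing $e^{-u^2}=-\tfrac1{2u}(e^{-u^2})'$ and integrating by parts twice, then using $\int_b^\infty u^{-4}e^{-u^2}du=\int_b^\infty u^{-5}\,(ue^{-u^2})\,du\le\tfrac1{b^5}\int_b^\infty ue^{-u^2}du=\tfrac{e^{-b^2}}{2b^5}$, one gets $\int_b^\infty e^{-u^2}du\le e^{-b^2}\bigl(\tfrac1{2b}-\tfrac1{4b^3}+\tfrac3{8b^5}\bigr)$, the bracket being positive for all $b>0$ because the quadratic $1-\tfrac x2+\tfrac{3x^2}4$ in $x=1/b^2$ has negative discriminant. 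Multiplying by $2/\sqrt\pi$ and substituting $e^{-b^2}=g^\ast$, $b^2=\log(1/g^\ast)$ shows the left-hand side of \eqref{eq:g*2} dominates $\tfrac2{\sqrt\pi}\int_b^\infty e^{-u^2}du$, so \eqref{eq:g*2} gives the displayed bound. For criterion \eqref{eq:g*}, note $g^\ast\le e^{-1/4}$ is exactly $b\ge\tfrac12$, and the content is the auxiliary estimate
\[
    \frac2{\sqrt\pi}\int_b^\infty e^{-u^2}\,du\ \le\ \frac{e^{-b^2}}{6}+\frac{e^{-4b^2/3}}{2},\qquad b\ge\tfrac12,
\]
since $e^{-b^2}=g^\ast$ and $e^{-4b^2/3}=g^{\ast4/3}$. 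To prove it I would study $\Delta(b):=\tfrac{e^{-b^2}}6+\tfrac{e^{-4b^2/3}}2-\tfrac2{\sqrt\pi}\int_b^\infty e^{-u^2}du$, for which $\Delta(1/2)>0$ (a direct check), $\Delta(b)\to0$ as $b\to\infty$, and $\Delta'(b)=e^{-b^2}\bigl(\tfrac2{\sqrt\pi}-\tfrac b3-\tfrac{4b}{3}e^{-b^2/3}\bigr)$; one verifies that the bracket, positive at $b=\tfrac12$, changes sign from $+$ to $-$ exactly once on $[\tfrac12,\infty)$, so $\Delta$ increases then decreases there and hence $\Delta\ge\lim_{b\to\infty}\Delta(b)=0$.

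The main obstacle is this auxiliary inequality, which is essentially sharp at $b=\tfrac12$: the crude tail bounds $\int_b^\infty e^{-u^2}du\le\tfrac{e^{-b^2}}{2b}\le\tfrac{e^{-b^2}}{b\sqrt\pi}$ are too lossy near the endpoint, so one has to pin down the single sign change of $\Delta'$ — routine calculus, but delicate. A convenient alternative is to use that $R(b):=e^{b^2}\tfrac2{\sqrt\pi}\int_b^\infty e^{-u^2}du$ is decreasing, its derivative being $\tfrac2{\sqrt\pi}\bigl(2b e^{b^2}\!\int_b^\infty e^{-u^2}du-1\bigr)<0$ because $\int_b^\infty e^{-u^2}du<\tfrac{e^{-b^2}}{2b}$, together with the monotone decrease of $\tfrac16+\tfrac12 e^{-b^2/3}$: for $b\ge6/\sqrt\pi$ one has $R(b)<\tfrac1{b\sqrt\pi}\le\tfrac16$, and on the compact range $b\in[\tfrac12,6/\sqrt\pi]$ the target $R(b)\le\tfrac16+\tfrac12 e^{-b^2/3}$ reduces, both sides being monotone, to finitely many numerical comparisons across a fine partition.
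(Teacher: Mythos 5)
Your reduction is exactly the paper's: change variables in the defining relation for $A$ to obtain $\Erf\bigl(\sqrt{\pi(n^2-1)}\,A\bigr)=\sqrt{1-1/n^2}$, rearrange $f(A)\ge g^\ast$ into a Gaussian-tail inequality $\Erfc(b)\le 1-\sqrt{1-1/n^2}$ with $b=\sqrt{\log(1/g^\ast)}$, use $\sqrt{1-t}\le 1-t/2$ to pass to $\Erfc(b)\le\frac1{2n^2}$, and then bound the tail two ways. Your integration-by-parts estimate for criterion \eqref{eq:g*2} is the same computation as the paper's (the paper does one extra integration by parts and drops a negative term, landing on the identical bound). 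The only genuine divergence is in handling criterion \eqref{eq:g*}: the paper simply cites the bound $\Erfc(x)\le\frac{e^{-x^2}}{6}+\frac{e^{-4x^2/3}}{2}$ for $x\ge\frac12$ from the engineering literature (Chiani et al.), whereas you sketch a proof of it from scratch. Your first sketch --- that $\Delta(b)=\frac{e^{-b^2}}{6}+\frac{e^{-4b^2/3}}{2}-\Erfc(b)$ rises then falls on $[\frac12,\infty)$ --- hinges on showing the bracket $\frac{2}{\sqrt\pi}-\frac b3-\frac{4b}{3}e^{-b^2/3}$ in $\Delta'$ crosses zero exactly once; that bracket is itself non-monotone, so this needs its own argument and is the one real gap in the proposal (which you flag, and patch with the monotonicity-plus-partition alternative). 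In short: same route as the paper, plus an incomplete but reasonable attempt to make the cited Chiani inequality self-contained.
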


\begin{proof}
The definition of $A$,
$
    \frac 1 {2n} 
        = 
            \int_0^A e^{- \pi (n^2-1) x^2} dx
$, can be written through change of variables $ y = \sqrt{ \pi (n^2-1)} x$ as 
\begin{align*}
    \Erf( \sqrt{\pi (n^2-1)} A ) = \sqrt{ 1 - \frac 1 {n^2}}
\end{align*}
where $\Erf(x) \coloneqq \frac{2}{\sqrt{\pi}} \int_0^x e^{-t^2} dt$ is the error function.  We denote its inverse function $\Erf^{-1}$ and can give an explicit expression for $A$,
\begin{align*}
    A = \frac{  \Erf^{-1}\left( \sqrt{ 1 - \frac 1 {n^2}}\right)}{\sqrt{ \pi (n^2-1)}} .
\end{align*}
Thus,  the relation
\begin{align*}
    f(A) = e^{-\pi (n^2 -1) A^2} = e^{- \Erf^{-1}\left( \sqrt{1 - \frac 1 {n^2}}\right)^2} \geq g(x), 
\end{align*}
can be rearranged as
\begin{align*}
      \Erfc\left( \sqrt{ \log \frac 1 {g(x)} } \right) \leq 1 - \sqrt{ 1 - \frac 1 {n^2}} .
\end{align*}
Since 
$$
\frac{1}{2n^2} \leq 1 - \sqrt{ 1 - \frac 1 {n^2}}  
$$
it is  enough to prove that
$$
\Erfc\left( \sqrt{ \log \frac 1 {g(x)} } \right) \leq \frac{1}{2n^2} .
$$
The first claim follows from the following  upper bound for the Erfc function {which} is popular in Engineering (see \textit{e.g.}\ \cite{chiani2003new}) valid for  $x \geq 1/2$, 
\begin{align*} 
    \Erfc(x) \leq \frac{e^{-x^2}}{6} + \frac{e^{-4x^2/3}}{2}.
\end{align*}
For the second claim, we use successive integration by parts to get that, for $x>0$,
\begin{align*}
\frac{\sqrt{\pi}}{2} \Erfc\left( x \right) 
& = 
\int_x^\infty e^{-t^2}dt 
=    
\int_x^\infty \frac{-2t e^{-t^2}}{-2t}dt
=
\frac{e^{-x^2}}{2x} - \frac{1}{2}\int_x^\infty \frac{e^{-t^2}}{t^2}dt \\
& =
\frac{e^{-x^2}}{2x} - \frac{1}{4} \frac{e^{-x^2}}{x^3} + \frac{3}{4}\int_x^\infty \frac{e^{-t^2}}{t^4}dt \\
& =
\frac{e^{-x^2}}{2x} - \frac{1}{4} \frac{e^{-x^2}}{x^3}
+ \frac{3}{8} \frac{e^{-x^2}}{x^5} - 
\frac{15}{8}\int_x^\infty \frac{e^{-t^2}}{t^6}dt \\
& \leq
\frac{e^{-x^2}}{2x} - \frac{1}{4} \frac{e^{-x^2}}{x^3}
+ \frac{3}{8} \frac{e^{-x^2}}{x^5}
\end{align*}
from which the second claim follows.
\end{proof}


\bibliographystyle{plain}
\bibliography{Nazarov-distribution}
  
\end{document}